\documentclass[12pt,reqno]{amsart}

\usepackage{amsfonts,amssymb,amsmath,amsopn,amsthm,graphicx}
\usepackage{amsxtra, mathrsfs}
\usepackage{colordvi}
\usepackage[usenames,dvipsnames]{color}
\usepackage{amsfonts,amssymb,amsbsy,amsmath,amsthm,dsfont}
\usepackage[many]{tcolorbox}
\usetikzlibrary{decorations.pathreplacing}
\usepackage{lipsum}
\usepackage[many]{tcolorbox}
\usetikzlibrary{decorations.pathreplacing}

\newtcolorbox{leftbrace}{%
	enhanced jigsaw, 
	breakable, 
	frame hidden, 
	overlay={%
		\draw [
		decoration={brace,amplitude=0.5em},
		decorate,
		ultra thick,
		]
		(frame.south west)--(frame.north west);
	},
	parbox=false,
}

\linespread{1.1} \numberwithin{equation}{section}

\newtheorem{theorem}{Theorem}[section]
\newtheorem{corollary}[theorem]{Corollary}

\newtheorem{lemma}[theorem]{Lemma}
\newtheorem{proposition}[theorem]{Proposition}

\newtheorem{definition}[theorem]{Definition}

\newtheorem{remark}[theorem]{Remark}


\theoremstyle{definition}

\newtheorem{assumptions}[theorem]{Assumptions}

\usepackage{amssymb,amsmath,graphicx,amsthm,a4wide,wrapfig,caption,subcaption,epstopdf}
\usepackage[latin1]{inputenc}
\usepackage{enumitem}
\usepackage[colorlinks,pdfpagelabels,pdfstartview = FitH,bookmarksopen = true,bookmarksnumbered = true,linkcolor = blue,
plainpages = false,hypertexnames = false,citecolor = red,pagebackref=false]{hyperref}
\usepackage{verbatim}

\usepackage{lineno}
\usepackage{textcomp}
\usepackage{mathtools,hyperref}
\usepackage{cleveref}

\usepackage{setspace,esint}





\newcommand{\Hmm}[1]{\leavevmode{\marginpar{\tiny%
			$\hbox to 0mm{\hspace*{-0.5mm}$\leftarrow$\hss}%
			\vcenter{\vrule depth 0.1mm height 0.1mm width \the\marginparwidth}%
			\hbox to
			0mm{\hss$\rightarrow$\hspace*{-0.5mm}}$\\\relax\raggedright #1}}}


\newcommand{\loc}{{\rm loc}}

\newcommand{\N}{\mathbb{N}}

\newcommand{\R}{\mathbb{R}}


\theoremstyle{definition}


\numberwithin{equation}{section}
\newcommand{\diver}{\mathrm{div}\,}
\newcommand{\RN}[1]{%
	\textup{\uppercase\expandafter{\romannumeral#1}}%
}
\newcommand{\dx}{\,\mathrm{d}x}

\newcommand{\dt}{\,\mathrm{d}t}

\newcommand{\ds}{\,\mathrm{d}s}

\newcommand{\dtau}{\,\mathrm{d}\tau}

\newcommand{\dH}{\,\mathrm{d}\mathcal{H}^{n-1}}

\newcommand{\core}{C_0^{\infty}(\Omega)}

\newcommand{\be}{\begin{equation}}
	\newcommand{\ee}{\end{equation}}
\newcommand{\bea}{\begin{eqnarray}}
	\newcommand{\eea}{\end{eqnarray}}
\newcommand{\bean}{\begin{eqnarray*}}
	\newcommand{\eean}{\end{eqnarray*}}


\newcommand{\opname}[1]{\mbox{\rm #1}\,}
\newcommand{\supp}{\opname{supp}}

\newlength{\wex}  \newlength{\hex}
%

\newcommand{\ass}[1]{Let Assumption \ref{assump1} holds  in a bounded Lipschitz domain $\Gw$}

\def\ga{\alpha}     \def\gb{\beta}       
             
                         \def\vge{\varepsilon}
       \def\vgf{\varphi}    
            \def\gl{\lambda}

      \def\gw{\omega}

\def\Gw{\Omega}

\pagestyle{headings}

\begin{document}

\title{Optimal Hardy-weights for the $(p,A)$-Laplacian with a potential term}

\author {Idan Versano}

\address {Idan Versano, Department of Mathematics, Technion - Israel Institute of
	Technology,   Haifa, Israel}

\email {idanv@campus.technion.ac.il}

\begin{abstract}
	We construct new optimal $L^p$ Hardy-type inequalities for elliptic  Schr\"odinger-type operators 
with a potential term.

\keywords{Green function, Hardy inequality, minimal growth, $p$-Laplacian}

	\noindent  2000  \! {\em Mathematics  Subject  Classification.} Primary 35A23; Secondary  35B09, 35J08, 35J25 \\[1mm]
\end{abstract}
\maketitle

\section{Introduction}

For any $\xi\in \R^n$ and a positive definite matrix $A\in \R^{n \times n}$, let  
$| \xi|_A:=\sqrt{\langle A\xi,\xi \rangle }$, where $\langle \cdot, \cdot \rangle$ denotes the Euclidean inner product on $\R^n$.
Consider a second order half-linear operator of the form 
$$
Q_{p,A,V}(u):=-\diver\left (|\nabla u|^{p-2}_{A}A\nabla u \right)+V|u|^{p-2}u
$$ defined in a domain $\Gw\subset \R^n$, and assume that $Q_{p,A,V}$ admits a positive solution in $\Gw$.
We are interested to find an optimal  weight function $W\gneqq 0$ (see Definition \ref{def:optimal}) such that the equation $Q_{p,A,V-W}(u)=0$ admits a positive solution in $\Gw$.
Equivalently \cite[Theorem 4.3]{PP}, we are interested to find an optimal  weight function $W\gneqq 0$  such that the following Hardy-type inequality is satisfied:
\begin{equation}\label{eq:first_func}
\int_{\Gw}(|\nabla \phi|_A^p+V|\phi|^p ) \dx\geq \int_{\Gw} W|\phi|^p \dx \qquad \forall \phi\in C_0^{\infty}(\Gw).
\end{equation}
In some definite sense, an optimal weight $W\gneqq 0$ is  "as large as possible" nonnegative function such that \eqref{eq:first_func} is satisfied for all nonnegative
$\phi\in \core$.

The search for Hardy-type inequalities with optimal weight
function $W$ was originally  proposed by Agmon, who   raised this problem in connection with
his theory of exponential decay of Schr\"odinger eigenfunctions \cite[p. 6]{Agmon}.
In the past four decades, the problem of improving Hardy-type inequalities has engaged many authors. In particular, 
Hardy-type inequalities were established for a vast class of operators (e.g., elliptic operators, Schr\"odinger operators on graphs, fractional differential equations) with different types of boundary conditions, see \cite{BEL,BFT,BD,BM,DFP,DP,EHL,KPP,PV}. 
In \cite{DP}, Devyver and Pinchover studied the problem of {\em optimal}  weights for the operator
$Q_{p,A,V}$. However, they managed to find optimal weights only in the case where $A$ is the identity matrix and $V=0$.  They proved (under certain assumptions ) 
that the $p$-Laplace operator, $-\diver\left (|\nabla u|^{p-2}\nabla u \right)$, admits an optimal Hardy-weight. 
More specifically, it is proved that if $1< p\leq n$, then 
$W= \left( \frac{p-1}{p}\right)^p\left| \frac{\nabla G}{G}\right|^p$ an optimal Hardy-weight, 
where $G$ is the associated positive minimal Green function with singularity at $0$. For $p>n$, several cases should be considered, depending on the behavior of a positive $p$-harmonic function with singularity at $0$.

In the present paper we make a nontrivial progress towards the study of \eqref{eq:first_func} in the case where $A$ is not necessarily the identity matrix, and $V$ is a slowly growing potential function.
Our main result reads as follows.
\begin{theorem}\label{thm:main_non_2}
	Let $\Gw\subset \R^n$ be a domain and  $x_0\in  \Gw$. Let
	$Q_{p,A,V}$ be a subcritical operator in $\Gw$ satisfying Assumptions \ref{assump2_non_linear}  in $\Gw$.   Suppose that $Q_{p,A,V}$ admits a (nonnegative) Green potential, $G_{\vgf}(x)$, in $\Gw$ (see Definition \ref{def:Green}) satisfying 
	\begin{equation*}
	\lim\limits_{x\to \overline{\infty}}G_{\vgf}(x)=0; \quad
	\int_{\Gw }V G_{\vgf}(x)^{p-1}\dx< 0;
	\quad
	\int_{\Gw }|V|G_{\vgf}(x)^{p-1}\dx<\infty,
	\end{equation*}
	where $\overline{\infty}$ denotes the ideal point in the one-point compactification of $\Gw$. 
	Then the operator $Q_{p,A,{V}/{c_p}}$ admits an optimal Hardy-weight in $\Gw$, where $c_p=\big (p/(p-1)\big )^{p-1}$.
\end{theorem}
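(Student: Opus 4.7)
The plan is to adapt the Devyver--Pinchover supersolution construction \cite{DP} to the present setting of nonzero potential $V$ and anisotropy $A$. The candidate ground state is
$$v \;:=\; G_{\vgf}^{(p-1)/p},$$
and the candidate Hardy weight is the quantity $W:=Q_{p,A,V/c_p}(v)/v^{p-1}$, which I first compute explicitly.

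\textbf{Step 1 (identify $W$).} Using $\nabla v=\tfrac{p-1}{p}G_{\vgf}^{-1/p}\nabla G_{\vgf}$ together with the defining equation $-\diver(|\nabla G_{\vgf}|_A^{p-2}A\nabla G_{\vgf})+VG_{\vgf}^{p-1}=\vgf$, a routine chain-rule calculation gives
$$W \;=\; \left(\tfrac{p-1}{p}\right)^{p}\!\left|\tfrac{\nabla G_{\vgf}}{G_{\vgf}}\right|_A^{p} \;+\; \tfrac{1}{c_p}\cdot\tfrac{\vgf}{G_{\vgf}^{p-1}}.$$
The two $V$-contributions in $Q_{p,A,V/c_p}(v)$ cancel exactly: the divergence part produces $-\tfrac{1}{c_p}Vv^{p-1}$, which is cancelled by the $+\tfrac{V}{c_p}v^{p-1}$ coming from the potential, and this cancellation is the precise reason for scaling $V$ by $c_p=(p/(p-1))^{p-1}$. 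Consequently $v>0$ solves $Q_{p,A,V/c_p-W}u=0$ in $\Gw$, and the Hardy inequality \eqref{eq:first_func} (with $V$ replaced by $V/c_p$) follows from the Allegretto--Piepenbrink-type theorem \cite[Theorem~4.3]{PP}.

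\textbf{Step 2 (optimality).} I would then verify the three standard conditions characterizing an optimal Hardy weight in the sense of Definition~\ref{def:optimal}. For \emph{criticality} of $Q_{p,A,V/c_p-W}$, I would use the hypothesis $\lim_{x\to\overline{\infty}}G_{\vgf}=0$ together with the fact that $G_{\vgf}$ is by construction of minimal growth at $\overline{\infty}$ to conclude that $v=G_{\vgf}^{(p-1)/p}$ is a global positive solution of the critical equation having minimal growth at $\overline{\infty}$; an abstract criterion of the kind in \cite{PP} then identifies $v$ as a ground state. For \emph{null-criticality} I would compute
$$\int_{\Gw}Wv^{p}\dx \;=\;\left(\tfrac{p-1}{p}\right)^{p}\!\int_{\Gw}\tfrac{|\nabla G_{\vgf}|_A^{p}}{G_{\vgf}}\dx \;+\; \tfrac{1}{c_p}\int_{\Gw}\vgf\,\dx,$$
and show the first integral diverges from the singular asymptotics of $G_{\vgf}$ near its pole $x_0$ (already the classical $\int|x|^{-n}\dx$ divergence in the model Euclidean case). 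For \emph{optimality at infinity}, I would produce, following \cite{DP}, a second positive solution of $Q_{p,A,V/c_p-W}u=0$ on $\Gw\setminus K$ for every relatively compact $K\ni x_0$, of the form $v\cdot(\log(1/G_{\vgf}))^{1/p}$ up to lower order corrections; the existence of this second solution near $\overline{\infty}$ rules out a Hardy inequality with weight $\lambda W$ for any $\lambda>1$.

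\textbf{Main obstacle.} The most delicate step should be the criticality proof, because the source $\vgf\not\equiv 0$ adds an extra term to the natural Picone-type identity that the usual $V=0$ argument does not see. The hypothesis $\int V G_{\vgf}^{p-1}\dx<0$ is precisely what is needed to keep the combined source/potential contribution sign-definite against the test function $v$, while $\int|V|G_{\vgf}^{p-1}\dx<\infty$ legitimizes the integrations by parts via an approximation argument with compactly supported $\phi\in\core$. Making this sign analysis and approximation rigorous, especially when $V$ changes sign, is where I anticipate the main technical work.
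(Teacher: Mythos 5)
Your Step 1 is correct: the formula $W=\left(\tfrac{p-1}{p}\right)^p\left|\nabla G_{\vgf}/G_{\vgf}\right|_A^p+\tfrac{1}{c_p}\vgf/G_{\vgf}^{p-1}$ is exactly what Proposition~\ref{firstprop} yields, and your explanation of why the factor $c_p$ is forced (the constant $\left(\tfrac{f(u)}{f'(u)u}\right)^{p-1}=c_p$ converts $V/c_p$ back into $V$ so that $-\Delta_{p,A}(G_{\vgf})+VG_{\vgf}^{p-1}=\vgf$ can be substituted) is the same mechanism the paper exploits. You also correctly observe, via Remark~\ref{rem_opt_infty}, that one only needs criticality and null-criticality, so your ``third condition'' is redundant, though your proposed second solution is a reasonable backup.

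There is, however, a genuine gap in Step~2, and it is conceptual rather than technical. You locate the null-criticality divergence ``from the singular asymptotics of $G_{\vgf}$ near its pole $x_0$,'' but $G_{\vgf}$ is a Green \emph{potential} (a solution of the inhomogeneous equation $Q_{p,A,V}(u)=\vgf$ with $\vgf\in C_0^\infty$, Definition~\ref{def_green}), not the minimal Green function with singularity at $x_0$; it is bounded and $C^{1,\alpha}_{\loc}$ throughout $\Gw$, so there is no pole, no $\int|x|^{-n}$-type blow-up, and the integral $\int_{\Gw}Wv^p\,\mathrm{d}x$ is locally finite near $x_0$. The divergence actually occurs at $\overline{\infty}$, where $G_{\vgf}\to 0$. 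The paper makes this precise through Lemma~\ref{coarea grad G}, which is the technical heart of the proof and which your sketch does not supply: it uses the Gauss--Green theorem on the sublevel domain $\Gw_t=\{G_{\vgf}>t\}$ to get
\[
\int_{\{G_{\vgf}=t\}}|\nabla G_{\vgf}|_A^{p-1}\,\mathrm{d}\sigma_A=\int_{\Gw_t}\bigl(\vgf-VG_{\vgf}^{p-1}\bigr)\,\mathrm{d}x\asymp C,
\]
uniformly for small $t$; the upper bound comes from $\int_{\Gw}|V|G_{\vgf}^{p-1}<\infty$, and the lower bound from $\int_{\Gw}VG_{\vgf}^{p-1}<0$ (which forces the second summand to be nonnegative for $t$ small). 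Feeding this into the coarea formula gives $\int_{\Gw\setminus K}Wv^p\,\mathrm{d}x\asymp\int_0^{\cdot}\mathrm{d}t/t=\infty$. Likewise, your criticality argument is too soft: you cannot conclude criticality merely from ``$v$ has minimal growth at $\overline{\infty}$''; one needs to know $v$ is an Agmon ground state for the \emph{new} operator $Q_{p,A,V/c_p-W}$, and minimal growth of $G_{\vgf}$ for $Q_{p,A,V}$ does not transfer. The paper instead constructs an explicit null-sequence $u_k=\phi_k(v)\,v$ with a logarithmic cutoff $\phi_k$, and uses the simplified-energy comparison of Lemma~\ref{lem:simp_equiv}/Corollary~\ref{cor:simp_non} together with the same coarea estimate to show $\mathcal{Q}(u_k)\to 0$. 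In short: the two integral conditions on $V$ are not used to control a Picone remainder as you surmise, but to control the $t$-independent surface integral $\int_{\{G_{\vgf}=t\}}|\nabla G_{\vgf}|_A^{p-1}\,\mathrm{d}\sigma_A$, which is the engine for both criticality and null-criticality, and the divergence is a boundary phenomenon, not a pole phenomenon.
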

As a corollary  of the proof of Theorem \ref{thm:main_non_2} we obtain the following result.
\begin{corollary}\label{thm:main_non_1}
	Let $\Gw\subset \R^n$ be a domain and  $x_0\in K\Subset \Gw$. Let
	$Q_{p,A,V}$ be a subcritical operator in $\Gw$ satisfying Assumptions \ref{assump2_non_linear} with $V\leq 0$ in $\Gw$.   Suppose that $Q_{p,A,V}$ admits a positive minimal Green function $G(x)$ in $\Gw\setminus \{x_0\}$ (see Definition \ref{def:Green}) satisfying 
	\begin{equation}\label{eq:Gphi_min_non}
	\lim\limits_{x\to \overline{\infty}}G(x)=0, \quad \mbox{ and }
	\int_{\Gw \setminus K}|V||G(x)|^{p-1}\dx<\infty,
	\end{equation}
	where $\overline{\infty}$ denotes the ideal point in the one-point compactification of $\Gw$. 
	Then the operator $Q_{p,A,{V}/{c_p}}$ admits an optimal Hardy-weight in $\Gw$, where $c_p=\big (p/(p-1)\big )^{p-1}$.
\end{corollary}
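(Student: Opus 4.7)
The plan is to deduce Corollary~\ref{thm:main_non_1} from Theorem~\ref{thm:main_non_2}. The key idea is to construct, from the given minimal Green function $G$, an auxiliary Green potential $G_{\vgf}$ that satisfies all three hypotheses of Theorem~\ref{thm:main_non_2}. I would fix a nonnegative, nontrivial, continuous function $\vgf$ whose compact support is contained in $K$. The subcriticality of $Q_{p,A,V}$ then yields a positive Green potential $G_{\vgf}$ with $Q_{p,A,V}(G_{\vgf})=\vgf$ in the appropriate weak sense; in particular, outside $\supp(\vgf)$, $G_{\vgf}$ is a positive $Q_{p,A,V}$-harmonic function of minimal growth at the ideal point $\overline{\infty}$.

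The second step is to verify the three hypotheses of Theorem~\ref{thm:main_non_2} for $G_{\vgf}$. Applying uniqueness (up to a multiplicative constant) of positive solutions of minimal growth near $\overline{\infty}$ to both $G$ and $G_{\vgf}$ on $\Gw\setminus K$, I expect a comparison bound of the form $G_{\vgf}(x) \le C\,G(x)$ for $x\in \Gw\setminus K$. This bound, together with \eqref{eq:Gphi_min_non}, immediately gives $\lim_{x\to\overline{\infty}}G_{\vgf}(x)=0$. The integrability $\int_\Gw |V|G_{\vgf}^{p-1}\dx<\infty$ then splits into two contributions: the integral over $K$ is bounded by the supremum of $G_{\vgf}$ on $K$ times the local $L^1$-norm of $V$, which is finite by Assumptions~\ref{assump2_non_linear}; and the integral over $\Gw\setminus K$ is controlled by the comparison estimate and \eqref{eq:Gphi_min_non}. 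Finally, since $V\le 0$ and $G_{\vgf}>0$ in $\Gw$, we have $\int_\Gw V G_{\vgf}^{p-1}\dx\le 0$, with strict inequality whenever $V\not\equiv 0$. Theorem~\ref{thm:main_non_2} then furnishes the optimal Hardy-weight for $Q_{p,A,V/c_p}$.

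The remaining borderline case $V\equiv 0$ is where the phrasing \emph{as a corollary of the proof} becomes essential: the strict sign condition in Theorem~\ref{thm:main_non_2} fails, so one cannot invoke that theorem as a black box. Instead I would rerun the construction of the optimal Hardy-weight from the proof of Theorem~\ref{thm:main_non_2} with $G$ itself in place of $G_{\vgf}$, observing that the assumption \eqref{eq:Gphi_min_non} on $G$ provides precisely the decay and integrability estimates needed to carry out the optimality arguments (criticality, null-criticality, and optimality at infinity) in this $V$-free setting, recovering a matrix-valued analogue of the Devyver--Pinchover theorem.

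The main technical obstacle is the comparison $G_{\vgf}(x)\le C\,G(x)$ for $x\in \Gw\setminus K$. In the nonlinear $(p,A)$-setting, this is not a mere consequence of linearity but requires a Harnack-type comparison principle between two positive solutions of minimal growth at the ideal point; such a principle should be available under Assumptions~\ref{assump2_non_linear} and is the real content being extracted from the underlying subcritical theory. Once this comparison is in hand, the rest of the argument is a routine application of Theorem~\ref{thm:main_non_2} (or, in the case $V\equiv 0$, a routine rerun of its proof).
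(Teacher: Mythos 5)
Your proposal follows essentially the same route as the paper's proof: construct a Green potential $G_{\vgf}$ for $Q_{p,A,V}$, compare it with $G$ via minimal growth to inherit the decay and integrability hypotheses, and then feed this into the optimal Hardy-weight machinery. Two remarks on the details.

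First, you treat the comparison $G_{\vgf}\le C\,G$ on $\Gw\setminus K$ as the ``main technical obstacle'' requiring a Harnack-type comparison principle between two solutions of minimal growth. In fact this is immediate from Definition~\ref{def_green}: $G_{\vgf}\in\mathcal{MG}_{A,V,\Gw,\supp(\vgf)}$, and since $Q_{p,A,V}$ is $(p-1)$-homogeneous, $CG$ is a positive solution (hence supersolution) in $\Gw\setminus K$ once $K$ contains both $\supp(\vgf)$ and $x_0$; choosing $C$ so that $G_{\vgf}\le CG$ on $\partial K$ and invoking the minimal-growth property of $G_{\vgf}$ gives $G_{\vgf}\le CG$ in $\Gw\setminus K$. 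No separate comparison principle is being ``extracted from the underlying subcritical theory'' beyond this.

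Second, the paper's handling of the non-strict sign condition is cleaner than your two-case split. Rather than applying Theorem~\ref{thm:main_non_2} when $V\not\equiv 0$ and rerunning the argument when $V\equiv 0$, the paper invokes Remark~\ref{rem:M} / Remark~\ref{rem:M2}, which observe that the only place the strict inequality $\int_{\Gw}VG_{\vgf}^{p-1}\dx<0$ enters (namely the two-sided bound \eqref{eq:M} in Lemma~\ref{coarea grad G}) is equally well guaranteed by $V\le 0$ alone, since then $\vgf-V|G_{\vgf}|^{p-2}G_{\vgf}\ge\vgf\ge 0$ pointwise. This is precisely the ``corollary of the proof'' content: Lemma~\ref{optmal Lp} holds under the weaker sign hypothesis, so one applies it directly without any case distinction. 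Your version is correct but duplicates work in the $V\equiv 0$ edge case (which, incidentally, already reduces to Theorem~\ref{DPThe}). You also mention verifying optimality at infinity separately; per Remark~\ref{rem_opt_infty} this follows from null-criticality and need not be argued independently.
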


The paper is organized as follows. In Section~\ref{sec2}, we introduce the necessary notation and recall some previously obtained results needed in the present paper. We proceed in Section~\ref{sec3}, with proving essential results needed for the proof of Theorem \ref{thm:main_non_2}, and then we prove Theorem~\ref{thm:main_non_2} and Corollary~\ref{thm:main_non_1}.

\section{Preliminaries}\label{sec2}
Let $\Gw\subset \R^n$ be a domain, and let $1<p<\infty$.
Throughout
the paper we use the following notation and conventions:
\begin{itemize}
	\item For any $R>0$ and $x\in \R^n$, we denote by $B_R(x)$ the open ball of radius $R$ centered at $x$, and $B_R^{+}(0)=\{x\in B_R(0) : x_n >  0\}$.

	
	\item We write $\Gw_1 \Subset \Gw_2$ if $\Gw_2$ is open in $\Gw,$ the set $\overline{\Gw_1}$ is compact, and $\overline{\Gw_1}\subset \Gw_2$. 
	\item $C$ refers to a positive constant which may vary from  line to line.
	\item Let $g_1,g_2$ be two positive functions defined in $\Gw$. We use the notation $g_1\asymp g_2$ in
	$\Gw$ if there exists a positive constant $C$ such
	that
	$$C^{-1}g_{2}(x)\leq g_{1}(x) \leq Cg_{2}(x) \qquad \mbox{ for all } x\in \Gw.$$	
	
	\item Let $g_1,g_2$ be two positive functions defined in $\Gw$, and let $x_0\in \Gw$. We use the notation $g_1\sim g_2$ near $x_0$ if there exists a positive constant $C$ such that
	$$
	\lim_{x\to x_0}\frac{g_1(x)}{g_2(x)}=C.
	$$

	\item The gradient of a function $f$ will be denoted either by $\nabla f$ or $Df$.

	\item $\chi_B$ denotes the characteristic function of a set $B\subset \R^n$.

	\item For any $1\leq p\leq \infty$, $p'$ is the H\"older conjugate exponent of $p$ satisfying $p'=p/(p-1)$.
	\item For $1\leq p<n$,  $p^*:=np/(n-p)$ is its Sobolev critical exponent.
	%

	\item For a real valued function $W$, we write $W\gneqq 0$ in $\Gw$ if $W\geq 0$ in $\Gw$ and $\sup\limits_{\Gw}W>0$.
	
	
	\item For a symmetric positive definite  $A\in L_\loc^\infty(\Gw,\R^{n\times n})$, we denote $\Delta_{p,A} (u):=\diver(|\nabla u|_A^{p-2}A\nabla u)$ is  the $(p,A)$-Laplace operator. 
	
	\item For a real valued function $u$ and $1<p<\infty$, $\mathcal{I}_p(u):=|u|^{p-2}u$.
	
	\item  $\overline{\infty}$ denotes the ideal point in the one-point compactification of $\Gw$. 
	\item $\R_+$ denotes the segment $(0,\infty)$.
	\item $d_{\Gw}=\mathrm{dist}(\cdot,\partial \Gw):\Gw\to (0,\infty)$ is the distance function to $\partial \Omega$.   
	\item $\mathrm{diam}(\Gw)$ denotes the diameter of $\Gw$.
	\item $\mathrm{supp} (u)$ denotes the support of the function $u$.
	\item  $\mathcal{H}^l$, $1\leq l \leq n$, denotes the $l$-dimensional Hausdorff measure on $\R^n$. 
\end{itemize}
\subsection{Gauss-Green formula}
We continue with several definitions and results concerning the Gauss-Green theorem \cite{CTZ}. 
\begin{definition}
	Let $D\subset \R^n$ be an open set.
	\begin{enumerate}
		\item   We denote by $\mathcal{M}(D)$ the space of all signed Radon measures $\mu$ on $D$ such that $\int_{D}\,\text{d}|\mu|<\infty$.
		\item A vector field $F\in L^\infty(D,\R^n)$ is called {\em a divergence measure field}, written as $F\in \mathcal{DM}^{\infty}(D)$, if $\diver(F)=\mu\in \mathcal{M}(D)$, i.e., there exists $\mu\in \mathcal{M}(D)$ such that
		$$
		\int_D \phi \,\mathrm{d}\mu=-\int_{D}\nabla \phi \cdot F \dx \qquad \forall\, \phi\in C_0^{\infty}(D).
		$$
		\item We say that a vector field $F\in L^{\infty}_{\loc}(D,\R^n)$ belongs to $ \mathcal{DM}^\infty_{\loc}(D)$ if for any  open subset  $E\Subset D$, we have  $F\in \mathcal{DM}^{\infty}(E)$.
	\end{enumerate}
\end{definition}
\begin{definition}[{cf. \cite{CTZ} and \cite[Section 5]{EG}  }]
	Let $D\subset \R^n$ be an open set.
	A function $f\in L^1(D)$ has a {\em  bounded variation} in $D$ if
	$$
	\sup\left \{ \int_{D} f\, \diver(\phi) \dx: \phi\in C_0^1(D,\R^n), |\phi|\leq 1 \right \}< \infty.
	$$
	Denote by  $\mathrm{BV}(D)$  the space of all functions $f\in L^1(D)$ having bounded variation.
\end{definition}
\begin{definition}
	Let $D\subset \R^n$ be an open set.
	A measurable subset $E\subset \R^n$ is said to be  {\em a set of finite perimeter} in $D$ if $
	\chi_D\in \mathrm{BV}(D)$.
\end{definition}
\begin{proposition}[{\cite[Theorem 5.9, p. 212]{EG}\label{prop:coarea_f_peri}}]
	Let $E\Subset \R^n$ and let $0\leq f\in \mathrm{BV}(E)\cap C^1(E)$. Then, for a.e. $t\in [0,\infty)$ the set 
	$\{x\in E: f(x)>t\}$ has finite perimeter.
	In particular, for a.e. $0\leq t_1<t_2$
	the set 
	$\{x\in E: t_1<f(x)<t_2\}$ has finite perimeter.
\end{proposition}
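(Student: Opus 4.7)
The plan is to read both assertions directly off the \textbf{BV coarea formula}, which is essentially the content of the cited Theorem~5.9 of Evans--Gariepy. Recall that for every $f\in\mathrm{BV}(E)$ the map $t\mapsto \mathrm{Per}\bigl(\{f>t\};E\bigr)$ is Lebesgue measurable and
$$
\|Df\|(E)=\int_{-\infty}^{\infty}\mathrm{Per}\bigl(\{f>t\};E\bigr)\dt.
$$
Since $E\Subset \R^n$ and $f\in \mathrm{BV}(E)$, we have $\|Df\|(E)<\infty$, so the nonnegative integrand on the right-hand side must be finite for a.e.\ $t\in\R$. Combined with $f\geq 0$, this already yields the first assertion: the superlevel set $\{x\in E : f(x)>t\}$ has finite perimeter for a.e.\ $t\in [0,\infty)$.

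For the \emph{in particular} statement I would argue in two short steps. First, because $f\in C^1(E)$, Sard's theorem (or the smooth coarea formula) shows that $\{f=t\}$ has Lebesgue measure zero for a.e.\ $t$; hence $\{f<t_2\}$ coincides, up to a Lebesgue null set, with $E\setminus\{f>t_2\}$, and the latter set has finite perimeter for a.e.\ $t_2$ by the first step (taking complements does not change the perimeter). Second, the intersection of two sets of finite perimeter in $E$ is again of finite perimeter: if $\chi_A,\chi_B\in \mathrm{BV}(E)\cap L^\infty(E)$, then $\chi_{A\cap B}=\chi_A\chi_B\in \mathrm{BV}(E)$ by the BV product rule. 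Writing
$$
\{t_1<f<t_2\}=\{f>t_1\}\cap\{f<t_2\},
$$
the conclusion follows for every fixed pair $(t_1,t_2)$ at which both sets $\{f>t_1\}$ and $\{f<t_2\}$ have finite perimeter.

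The only mildly delicate point, which I expect to be the main (minor) obstacle, is the passage from the two separate ``a.e.\ $t$'' statements to an ``a.e.\ pair $(t_1,t_2)$'' statement. This is handled by Fubini: the exceptional set $N\subset [0,\infty)$ on which the previous step fails is a one-dimensional Lebesgue null set, hence $(N\times \R_+)\cup(\R_+\times N)$ is two-dimensional null, and so the conclusion indeed holds for a.e.\ $(t_1,t_2)$ in the half-plane $\{0\leq t_1<t_2\}$.
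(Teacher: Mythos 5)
The paper offers no proof of its own here: Proposition~\ref{prop:coarea_f_peri} is a direct citation, and in fact the first assertion (that $\{f>t\}$ has finite perimeter for a.e.\ $t$) is \emph{part of the statement} of the cited coarea formula for $\mathrm{BV}$ functions, not something to be deduced from it. Your reconstruction of that first step from the identity $\|Df\|(E)=\int_{-\infty}^{\infty}\mathrm{Per}(\{f>t\};E)\,\mathrm{d}t<\infty$ is nevertheless a correct way to see why it holds, and your treatment of the ``in particular'' clause --- passing to the complement $\{f\leq t_2\}$, discarding the level set $\{f=t_2\}$ (a Lebesgue-null set for a.e.\ $t_2$), intersecting two finite-perimeter sets, and handling the ``a.e.\ pair'' quantifier by a Fubini argument --- supplies exactly the small extra argument the paper's citation leaves implicit. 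So the proposal is correct and consistent with the paper's intent.

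One minor remark: Sard's theorem is more than you need to conclude $|\{f=t\}|=0$ for a.e.\ $t$. Since the level sets $\{f=t\}$, $t\in\R$, are pairwise disjoint and $E$ has finite measure, at most countably many of them can have positive $\mathcal{L}^n$-measure, which already gives the claim for every measurable $f$ without invoking $C^1$ regularity. (The $C^1$ hypothesis is used elsewhere in the paper to control the boundary $\partial\{t_1<G<t_2\}$ in the Gauss--Green lemma, not for the measure-zero statement.)
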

We proceed with the following Gauss-Green theorem of divergence measure fields over sets of finite perimeter (see \cite[Theorems 5.2 and 7.2]{CTZ} and \cite[Proposition 3.1]{DP}).
\begin{lemma}\label{lem:Gauss_Green_non}
	Let $D\subset \R^n$ be an open set.  Suppose that
	$F\in \mathcal{DM}^{\infty}_{\loc}(D)$ with $\diver(F)=\mu\in \mathcal{M}(D)$. Let $E\Subset D$ be a set of finite perimeter satisfying
	\begin{itemize}
		\item $\partial E=\left(\bigcup\limits_{k\in \N}D_k\right) \cup N$,
		\item for each $k\in \N$, $D_k $ is $(n-1)$- dimensional $C^{1}$ surface, and $\mathcal{H}^{n-1}(N)=0$.
	\end{itemize}
	Then,  
	$$
	\int_{E}\diver(F)  \dx=\int_{\partial E}F\cdot \vec{n}\dH,
	$$
	where $\vec{n}$ is a classical outer unit normal to $\partial E$ which is defined $\mathcal{H}^{n-1}$-a.e.    on $\partial E$.
\end{lemma}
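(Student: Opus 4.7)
The strategy follows the now-standard approach to Gauss--Green formulas for divergence measure fields: smoothen $F$ by mollification, invoke the classical divergence theorem on each $C^1$ piece, and pass to the limit on both sides.

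First I would fix a standard mollifier $\rho_\eps$ and set $F_\eps:=F\ast \rho_\eps$, a smooth vector field on $D_\eps:=\{x\in D:\mathrm{dist}(x,\partial D)>\eps\}$. Since $E\Subset D$, one has $\overline{E}\subset D_\eps$ for all sufficiently small $\eps$, hence $F_\eps\in C^\infty$ in a neighborhood of $\overline{E}$. Writing $\partial E\setminus N=\bigcup_{k}D_k$ and applying the classical divergence theorem piecewise, we obtain
\[
\int_E \diver(F_\eps)\dx=\int_{\partial E}F_\eps\cdot \vec{n}\dH,
\]
since $\mathcal{H}^{n-1}(N)=0$ and the finite perimeter of $E$ ensures $\mathcal{H}^{n-1}(\partial E)<\infty$.

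The next step is the passage to the limit as $\eps\to 0^{+}$. On the left, $\diver(F_\eps)=\mu\ast\rho_\eps$ is the density of a smooth signed measure approximating $\mu$ in the weak-$\ast$ topology of $\mathcal{M}(D)$; provided $|\mu|(\partial E)=0$, this yields $\int_E \diver(F_\eps)\dx\to \mu(E)$. Otherwise, one replaces $E$ by an inner-thickened set $E_t=\{x\in E:\mathrm{dist}(x,\partial E)>t\}$, choosing $t>0$ in the (cocountable) set of values for which $|\mu|(\partial E_t)=0$, and then sends $t\to 0$ at the end. On the right, the bounds $\|F_\eps\|_\infty\leq \|F\|_\infty$ together with $F_\eps\to F$ in $L^1_{\loc}$ give, on each $C^1$ piece $D_k$, convergence of the classical trace $F_\eps \cdot \vec{n}$ to the weak normal trace $(F\cdot \vec{n})|_{D_k}\in L^\infty(D_k,\mathcal{H}^{n-1})$; summing over $k$ and dominating by $\|F\|_\infty \mathcal{H}^{n-1}(\partial E)<\infty$ yields the claim.

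The main obstacle is the correct interpretation of the boundary integrand: since $F$ is only $L^\infty$, it does not admit a pointwise trace in the usual Sobolev sense, so $F\cdot \vec{n}$ must be understood as the normal trace in the sense of Anzellotti and Chen--Torres--Ziemer, defined via an integration-by-parts identity against test functions supported in a one-sided neighborhood of $\partial E$. Verifying that this weak trace is well-defined as an $L^\infty$ function on each $C^1$ piece $D_k$, that it is independent of the one-sided approximation, and that it is recovered in the limit $\eps\to 0$ from the classical traces of the mollified vector fields, is the technical heart of the argument and is precisely what is carried out in the cited references.
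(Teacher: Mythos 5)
The paper does not prove this lemma; it is quoted directly from Chen--Torres--Ziemer (Theorems 5.2 and 7.2) and Devyver--Pinchover (Proposition~3.1), so there is no internal argument to compare with. Evaluating your sketch on its own merits, the framework (mollify $F$, apply the classical divergence theorem, pass to the limit) is a legitimate heuristic, but the pivotal step is asserted, not proved, and the justification you give for it is in fact false.

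Specifically, you claim that the bounds $\|F_\eps\|_\infty\leq\|F\|_\infty$ together with $F_\eps\to F$ in $L^1_{\loc}(D)$ yield convergence of the classical traces $F_\eps\cdot\vec{n}|_{D_k}$ to the Anzellotti/CTZ normal trace, with dominated convergence supplying the integral limit. This does not follow: $L^1_{\loc}$ convergence in the ambient $n$-dimensional space gives a.e.\ convergence along a subsequence with respect to Lebesgue measure, which says nothing about convergence $\mathcal{H}^{n-1}$-a.e.\ on the null set $\partial E$, and the uniform $L^\infty$ bound only gives weak-$*$ subsequential limits in $L^\infty(D)$, again with no control of hypersurface restrictions. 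Mollification does not commute with taking traces on a set of Lebesgue measure zero, and in general the weak normal trace of a $\mathcal{DM}^\infty$ field on a $C^1$ surface is \emph{not} recovered as a pointwise or $L^1(\mathcal{H}^{n-1})$ limit of the restrictions $F_\eps\cdot\vec{n}$; it is only characterized by an integration-by-parts identity or, equivalently, as a weak-$*$ limit of fluxes over one-sided parallel surfaces. The references you cite handle this by a genuinely different mechanism: rather than mollifying $F$, one approximates $E$ from the inside by smooth sublevel sets $E_\delta$ of a mollified indicator (or signed distance) function, uses the very definition of $\diver F=\mu$ to write the flux through $\partial E_\delta$, and passes $\delta\to 0$, with $\mathcal{H}^{n-1}(\partial E)<\infty$ and $|\mu|(\partial E_t)=0$ for a.e.\ $t$ controlling the limit. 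A second, more minor, point: after replacing $E$ by $E_t$ you would need to re-verify the rectifiability/$C^1$-structure hypothesis for $\partial E_t$, which is only Lipschitz in general, so the reduction is not entirely free. As written, then, the proposal has a real gap at the boundary-limit step; it identifies where the difficulty lies but offers an argument that would not close it.
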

\subsection{Local Morrey spaces}
In the present subsection we introduce a certain class of Morrey spaces that depend on the index $p$, where $1<p<\infty$.
\begin{definition}
	Let $q\in [1,\infty]$ and $\omega \Subset \R^n$.
	For a measurable, real valued function $f$ defined in $\omega$, we set 
	$$
	\|f\|_{M^{q}(\omega)}:=		\sup_{\substack{y\in \omega\\ r<\mathrm{diam}(\omega)}} 
	\frac{1}{r^{n/q'}}\int_{\omega\cap B_r(y)}|f| \dx.
	$$
	We write $f\in M^{q}_{\loc}(\Gw)$ if for any $\omega \Subset \Gw$ we have $\|f\|_{M^{q}(\omega)}<\infty$.
\end{definition}
Next, we define a special local Morrey space $M^{q}_{\loc}(p;\Gw)$ which depends on the values of the exponent $p$.
\begin{definition}
	For $p\neq n$, we define 
	$$
	M^{q}_{\loc}(p;\Gw):=
	\begin{cases}
	M^{q}_{\loc}(\Gw) \mbox{~with~} q>n/p & \mbox{~if~} p<n \\
	L^1_{\loc}(\Gw) & \mbox{~if~} p>n,
	\end{cases}
	$$
	while for $p=n$, $f\in M^{q}_{\loc}(n;\Gw)$ means that for some $q>n$ and any $\omega \Subset \Gw$ we have
	$$
	\|f\|_{M^q_{n;\Gw}}:=	\sup_{\substack{y\in \omega\\ r<\mathrm{diam}(\omega)}} 
	\vgf_q(r)\int_{\omega\cap B_r(y)}|f| \dx<\infty,
	$$
	where $\vgf_q(r):=\log(\mathrm{diam}(\omega)/r)^{q/n'}$ and $0<r<\mathrm{diam}(\omega)$.
\end{definition}
For the regularity theory of equations with coefficients in Morrey spaces we refer the reader to  \cite{MZ,PP}.

We associate to any  domain  $\Gw\subset \R^n$ an {\em  exhaustion}, i.e. a sequence of smooth, precompact domains $\{\Gw_{j}\}_{j=1}^{\infty}$
such that $\Gw_1\neq \emptyset$,
$\Gw_j \Subset \Gw_{j+1}$ and $\bigcup_{j=1}^{\infty}\Gw_j=\Gw$.
\subsection{Criticality theory for $Q_{p,A,V}$}
Let $1<p<\infty$, and consider the operator
\begin{equation}
Q_{p,A,V}(u) :=-\Delta_{p,A}(u)+V\mathcal{I}_p(u), 
\end{equation}
defined on a domain
$\Gw\subset \R^n, n\geq 2$,
where  $\Delta_{p,A}:=\diver(| \nabla u|_A^{p-2} A  \nabla u)$ and $\mathcal{I}_p(u):=|u|^{p-2}u$.
Unless otherwise stated, we always assume that the matrix $A$ and the potential function $V$ satisfy the following regularity assumptions:
\begin{assumptions}
	\label{assump2_non_linear} 
	\begin{itemize}
		\item[{\ }]				
		\item $A(x)\!=\!(a^{ij}(x))_{i,j=1}^{n}\in C^{\alpha}_{\loc}(\Gw,\R^{n^2})$ is a symmetric positive definite  matrix which is locally
		uniformly elliptic, that is, for any compact $K\Subset \Gw$ there exists  $\Theta_K>0$ such that 
		\begin{eqnarray*} 
			\Theta_K^{-1}\sum_{i=1}^n\xi_i^2\leq\sum_{i,j=1}^n
			a^{ij}(x)\xi_i\xi_j\leq \Theta_K\sum_{i=1}^n\xi_i^2 \quad \forall \xi\in \mathbb{R}^n \mbox{ and } \forall x\in K.
		\end{eqnarray*}		
		\item $V\in M^{q}_{\loc}(p;\Gw)$ is a real valued function.	
	\end{itemize}
\end{assumptions}

The associated  energy functional   for the operator $Q_{p,A,V}$ in $\Gw$ is defined by
$$
\mathcal{Q}_{p,A,V}^{\Gw}(\phi) :=\int_{\Gw}(|\nabla \phi|_A^p+V|\phi|^p) \dx \qquad  \phi\in C_0^{\infty}(\Gw).
$$
\begin{definition}
	We say that $u\in W^{1,p}_{\loc}(\Gw)$ is a (weak)  {\em solution} (resp. {\em supersolution}) of $Q_{p,A,V}(u) =0$ in $\Gw$ if for any $\phi\in C_0^{\infty}(\Gw)$ (resp. $ 0\leq \phi\in C_0^{\infty}(\Gw) $)
	$$
	\int_{\Gw} |\nabla u|_{A}^{p-2}A\nabla u \cdot \nabla \phi \dx +
	\int_{\Gw}V|u|^{p-2}u\phi \dx = 0 \; (\mathrm{resp.} \geq 0 ).
	$$ 
\end{definition}
It should be noted that the above definition makes sense due to the
following Morrey-Adams Theorem (see for example, \cite[Theorem~2.4]{PP} and references therein).
\begin{theorem}[Morrey-Adams theorem]
	Let $\omega\Subset \R^n$ and $V\in M^q(p;\omega)$.
	\begin{enumerate}
		\item There exists a constant $C=C(n,p,q)>0$ such that for any $\delta>0$
		$$
		\int_{\omega}|V||u|^p \dx \leq \delta \|\nabla u\|^{p}_{L^p(\omega,\R^n)}+\
		\frac{C}{\delta^{n/(pq-n)}}\|V\|^{pq/(pq-n)}_{M^q(p;\omega)}\|u\|^{p}_{L^p(\omega)}
		\;\;  \forall u\in W^{1,p}_{0}(\omega).
		$$
		\item For any $\omega'\Subset \omega$ with Lipschitz boundary, there exists a positive constant $C= C(n,p,q,\omega',\omega,\delta,\|V\|_{M^q(p;\omega)})$ and $\delta_0$ such that for $0<\delta\leq \delta_0$
		$$
		\int_{\omega'}|V||u|^p \dx \leq \delta  \|\nabla u\|^{p}_{L^p(\omega',\R^n)}
		+C\|u\|^{p}_{L^p(\omega')} \quad  \forall u\in W^{1,p}(\omega').
		$$
	\end{enumerate}
\end{theorem}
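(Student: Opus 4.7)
My plan is to derive both parts from the scale-invariant interpolation inequality
\[
\int_\omega |V||u|^p\dx \;\leq\; C_0\,\|V\|_{M^q(p;\omega)}\,\|\nabla u\|_{L^p(\omega)}^{n/q}\,\|u\|_{L^p(\omega)}^{p-n/q}
\qquad \forall\, u\in W^{1,p}_0(\omega),
\]
and then to apply Young's inequality $ab\leq \delta a^r+\delta^{-s/r}b^s$ with the conjugate pair $(r,s)=(pq/n,\,pq/(pq-n))$, taking $a=\|\nabla u\|_{L^p}^{n/q}$ and $b=\|V\|_{M^q}\|u\|_{L^p}^{p-n/q}$. The identity $(p-n/q)\cdot pq/(pq-n)=p$ produces precisely the constants $\delta^{-n/(pq-n)}\|V\|_{M^q}^{pq/(pq-n)}$ stated in part~(1); the exponents $n/q$ and $p-n/q$ are forced by the homogeneities of $V\mapsto V(\lambda\cdot)\in M^q$, of $u\mapsto u(\lambda\cdot)\in L^p$, and of $\nabla u\in L^p$.

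The interpolation inequality may be viewed as a Morrey-space upgrade of the classical H\"older--Gagliardo--Nirenberg estimate: if $V$ belonged to $L^q(\omega)$ rather than $M^q(\omega)$, then H\"older plus the embedding $W^{1,p}_0\hookrightarrow L^{pq'}$ (valid precisely because $q>n/p$) would give
\[
\int_\omega |V||u|^p\dx \;\leq\; \|V\|_{L^q}\,\|u\|_{L^{pq'}}^p\;\leq\; C\|V\|_{L^q}\,\|\nabla u\|_{L^p}^{n/q}\,\|u\|_{L^p}^{p-n/q},
\]
with the second step using $\|u\|_{L^{pq'}}\leq C\|\nabla u\|_{L^p}^{n/(pq)}\|u\|_{L^p}^{1-n/(pq)}$. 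The Morrey upgrade proceeds via the pointwise Riesz-potential bound $|u(x)|\leq C\,I_1(|\nabla u|)(x)$ and Hedberg's splitting $I_1(g)(x)\leq C_1\rho M(g)(x)+C_2\rho^{\,1-n/p}\|g\|_{L^p}$ (for $p<n$), combined with a Fefferman--Phong--type covering argument in which the Morrey control $\int_{B_r(y)}|V|\leq \|V\|_{M^q}\,r^{n/q'}$ directly dominates the maximal-function bound on each ball; the covering is calibrated so that the gap $p-n/q>0$ emerges as the interpolation exponent on $\|u\|_{L^p}$. The cases $p=n$ (requiring the logarithmic weight $\vgf_q$ built into $M^q_\loc(n;\Gw)$) and $p>n$ (where $W^{1,p}_0\hookrightarrow L^\infty$ and $V\in L^1_\loc$ trivialize the argument via Gagliardo--Nirenberg) are handled by parallel, simpler modifications.

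Part~(2) follows from part~(1) by a standard extension-and-cutoff argument. I would fix a Stein extension $E\colon W^{1,p}(\omega')\to W^{1,p}(\omega)$ and a cutoff $\chi\in C_0^\infty(\omega)$ with $\chi\equiv 1$ on $\omega'$; then $\chi Eu\in W^{1,p}_0(\omega)$ satisfies $\|\chi Eu\|_{L^p(\omega)}\leq C\|u\|_{L^p(\omega')}$ and $\|\nabla(\chi Eu)\|_{L^p(\omega)}\leq C(\|\nabla u\|_{L^p(\omega')}+\|u\|_{L^p(\omega')})$. Applying part~(1) to $\chi Eu$ with a rescaled $\delta$ produces a term $\delta\|\nabla u\|_{L^p(\omega')}^p$ plus a term controlled by $(\delta+C\delta^{-n/(pq-n)}\|V\|_{M^q}^{pq/(pq-n)})\|u\|_{L^p(\omega')}^p$, yielding the stated inequality with the constant $C=C(n,p,q,\omega',\omega,\delta,\|V\|_{M^q})$ as claimed.

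The principal obstacle is the Morrey upgrade of the H\"older--GNS argument, i.e., the Fefferman--Phong covering step in which a pointwise maximal-function estimate must be converted into a $\|V\|_{M^q}$-controlled integral while preserving the clean interpolation exponents $n/q$ and $p-n/q$. The borderline case $p=n$, handled by the logarithmic factor $\vgf_q$ appearing in the definition of $M^q_\loc(n;\Gw)$, is a secondary but nontrivial technical point.
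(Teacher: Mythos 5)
The paper does not prove this theorem at all: it is quoted verbatim from Pinchover--Psaradakis \cite[Theorem~2.4]{PP}, so there is no internal argument to compare against. That said, your outline is the standard proof and is essentially the one in the cited source and in \cite{MZ}. There the covering/local-Sobolev step is organized to yield directly the additive estimate
\[
\int_\omega |V|\,|u|^p\dx\;\le\; C\,\|V\|_{M^q(p;\omega)}\Bigl(\rho^{\,p-n/q}\,\|\nabla u\|_{L^p}^p+\rho^{-n/q}\,\|u\|_{L^p}^p\Bigr),\qquad 0<\rho<\mathrm{diam}(\omega),
\]
after which one solves $\delta=C\|V\|\rho^{\,p-n/q}$ for $\rho$; your route (multiplicative interpolation followed by Young with the conjugate pair $(pq/n,\,pq/(pq-n))$) is arithmetically equivalent, and your exponent bookkeeping and scaling check are correct, as is the extension-and-cutoff deduction of part~(2). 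Two caveats. First, the entire analytic content of part~(1) is the covering/Hedberg step you name but do not carry out; note that Hedberg's splitting naturally produces the additive bound in $\rho$ above rather than the multiplicative form, and passing to the latter requires optimizing over $\rho$ within the range $\rho<\mathrm{diam}(\omega)$ on which the Morrey norm controls $\int_{B_\rho}|V|$ --- for $u\in W^{1,p}_0(\omega)$ this is rescued by Poincar\'e's inequality, a point worth making explicit. Second, for $p>n$ the space $M^q(p;\omega)$ degenerates to $L^1$, so the theorem is read with $q=1$ and exponents $\delta^{-n/(p-n)}$, $\|V\|_{L^1}^{p/(p-n)}$, which is exactly what your Gagliardo--Nirenberg reduction via $W^{1,p}_0\hookrightarrow L^\infty$ delivers; the borderline $p=n$ does require the logarithmic gauge, as you note.
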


We denote the set of all positive solutions (resp., supersolutions) of $Q_{p,A,V}(u)=0$ in $\Gw$ by $\mathcal{C}^{Q_{p,A,V}}(\Gw)$ (resp., $\mathcal{K}^{Q_{p,A,V}}(\Gw)).$
We say that the operator $Q_{p,A,V}$ is {\em nonnegative } (in short $Q_{p,A,V} \geq 0$)  in $\Gw$ if 
$\mathcal{C}^{Q_{p,A,V}}\neq \emptyset$.

\begin{remark}
	A weak (super)solution of the equation $-\Delta_{p,A}(u)=0$ in $\Gw$ is said to be a  {\em $(p,A)$-(super)harmonic function} in $\Gw$.	
\end{remark}
It is well known that  under Assumptions~\ref{assump2_non_linear} any positive solution of the equation $Q_{p,A,V}(u)=0$ in $\Gw$ belongs to $C^{1,\alpha}(\Gw)$ (see for example \cite[Remark 1.1]{PP}).
Furthermore,  the following 
Harnack convergence principle holds true.
\begin{proposition}[Harnack convergence principle { \cite[Proposition 2.7]{GP1}}]
	Let $\{\Gw_{k}\}_{k\in \N}$ be an exhaustion of $\Gw$.
	Assume that $\{A_k\}_{k\in \N}$ is a sequence of symmetric and locally uniformly positive definite  matrices 
	such that the local ellipticity constants does not depend on $k$, and $\{A_k\}_{k\in \N}\subset L^{\infty}_{\loc}(\Gw_k,\R^{n^2})$ 
	converges  weakly in $L^{\infty}_{\loc}(\Gw,\R^{n^2})$ to a matrix  $A\in L^{\infty}_{\loc}(\Gw,\R^{n^2})$. Assume further that $\{V_k\}_{k\in \N}\subset M^{q}_{\loc}(p;\Gw_k)$ converges  weakly  in $M^{q}_{\loc}(p;\Gw)$ to $V\in M^{q}_{\loc}(p;\Gw)$.
	For each $k$, let $v_k$ be a positive solution of the equation $Q_{p,A_k,V_k}(u)=0$ in $\Gw_k$ such that $v_k(x_0)=1$, where $x_0$ is a fixed reference point in $\Gw_1$.
	Then there exists $0<\eta<1$ such that, up to a subsequence, $\{v_k\}_{k\in \N}$   converges weakly in $W^{1,p}_{\loc}(\Gw)$ and in $C_{\loc}^\gb(\Gw)$ to a positive weak solution $v$ of the equation $Q_{p,A,V}(u)=0$ in $\Gw$ (here $0<\gb<\ga$).
\end{proposition}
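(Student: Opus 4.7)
The plan is to obtain uniform local regularity for the sequence $\{v_k\}$, extract a subsequential limit $v$ by compactness, and then pass to the limit in the weak formulation of $Q_{p,A_k,V_k}(v_k)=0$.

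First, I would establish a uniform Harnack inequality on compacts. Since the ellipticity constants of $A_k$ do not depend on $k$, and the weak convergence of $\{V_k\}$ in $M^{q}_{\loc}(p;\Gw)$ delivers a uniform Morrey-norm bound $\sup_k\|V_k\|_{M^q(p;\omega)}<\infty$ for every $\omega\Subset \Gw$, the Harnack inequality for positive solutions of $Q_{p,A_k,V_k}(u)=0$ holds with constants independent of $k$ on each such $\omega$ (the needed $L^p$-control of $V_k|u|^p$ is supplied by the Morrey--Adams theorem already quoted). Together with the normalization $v_k(x_0)=1$, this yields locally uniform two-sided bounds on $v_k$. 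Standard interior $C^{1,\alpha}$ theory for quasilinear equations with Morrey-class coefficients, as in \cite{PP}, then bootstraps these bounds to uniform $C^{1,\beta}_{\loc}(\Gw)$ and $W^{1,p}_{\loc}(\Gw)$ estimates for some $\beta\in(0,\alpha)$.

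Second, a diagonal extraction over the exhaustion $\{\Gw_j\}$ combined with Arzel\`a--Ascoli produces a subsequence (still denoted $v_k$) such that $v_k\to v$ in $C^{\beta}_{\loc}(\Gw)$, $\nabla v_k\to \nabla v$ locally uniformly, and $v_k\rightharpoonup v$ weakly in $W^{1,p}_{\loc}(\Gw)$. The limit $v\in C^{1,\beta}_{\loc}(\Gw)$ satisfies $v(x_0)=1$, and the uniform Harnack lower bound transfers to $v$, giving $v>0$ throughout $\Gw$.

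The main obstacle is identifying $v$ as a weak solution of the limit equation. Fix $\phi\in C_0^\infty(\Gw)$ with $K:=\mathrm{supp}(\phi)\Subset \Gw$. The zeroth-order term $\int_K V_k|v_k|^{p-2}v_k\phi \dx$ converges to $\int_K V|v|^{p-2}v\phi\dx$, because $|v_k|^{p-2}v_k\phi\to |v|^{p-2}v\phi$ uniformly on $K$ and the weak convergence $V_k\rightharpoonup V$ in $M^q_{\loc}(p;\Gw)$ furnishes exactly the duality needed to pair against such test functions. For the principal term, the locally uniform convergence $\nabla v_k\to \nabla v$ together with the continuity of the map $(B,\xi)\mapsto |\xi|_B^{p-2}B\xi$ for uniformly elliptic bounded $B$ reduces matters, up to a $C(K;\R^n)$-error, to identifying the limit of
\begin{equation*}
\int_K |\nabla v|_{A_k}^{p-2}A_k\nabla v\cdot \nabla \phi \dx .
\end{equation*}
This is where the nonlinear-in-$A_k$ nature of the flux meets the mere weak convergence of $\{A_k\}$, and is the heart of the argument: the hypothesis must be read strongly enough (e.g., extracting an a.e.\ convergent subsequence of $\{A_k\}$, or exploiting a monotonicity/Minty-type argument built on the strong gradient convergence already obtained) to pass the integrand to $|\nabla v|_A^{p-2}A\nabla v\cdot \nabla \phi$. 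Once this limit passage is justified, $v$ is recognized as a positive weak solution of $Q_{p,A,V}(u)=0$ in $\Gw$ with $v(x_0)=1$, which completes the argument.
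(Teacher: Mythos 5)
The paper does not prove this proposition; it is quoted verbatim from \cite[Proposition 2.7]{GP1}, so there is no in-paper argument to compare against. Assessed on its own terms, your outline follows the natural route (uniform local bounds $\to$ compactness $\to$ limit passage in the weak formulation, with Morrey--Adams controlling the potential term), but two steps do not hold as written.

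First, your claim of uniform $C^{1,\beta}_{\loc}$ estimates and locally uniform convergence of $\nabla v_k$ is not available under the stated hypotheses. The matrices $A_k$ are assumed only to lie in $L^{\infty}_{\loc}$; interior $C^{1,\beta}$ regularity for the $(p,A)$-Laplacian requires at least Dini/H\"older continuity of the matrix, uniformly in $k$. With measurable $L^\infty$ coefficients the Serrin--Trudinger--DiBenedetto machinery gives only $C^{\beta}_{\loc}$ H\"older continuity of $v_k$ together with a local $W^{1,p}$ bound --- and indeed the proposition claims only weak $W^{1,p}_{\loc}$ and $C^{\beta}_{\loc}$ convergence of $v_k$, not anything about $\nabla v_k$ beyond weak convergence. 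So you cannot invoke locally uniform gradient convergence; this collapses the reduction you perform in the principal term, which relied precisely on $\nabla v_k\to\nabla v$ uniformly on $K$.

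Second, and more fundamentally, you identify the passage to the limit in $\int |\nabla v_k|_{A_k}^{p-2}A_k\nabla v_k\cdot\nabla\phi\,\mathrm{d}x$ as ``the heart of the argument'' and then leave it unresolved. This is the entire content of the proposition, so the proposal does not close. The usual tool for the nonlinearity in the gradient is a Minty--Browder monotonicity argument: using the strict monotonicity of $\xi\mapsto|\xi|_A^{p-2}A\xi$, testing against $v_k-v$ and exploiting the weak $W^{1,p}$ convergence, one upgrades to a.e.\ convergence of $\nabla v_k$ along a subsequence, after which the flux converges by Vitali. You should spell that out. But even that does not dispose of the nonlinearity in $A_k$: the map $B\mapsto|\xi|_B^{p-2}B\xi$ is nonlinear in $B$, so weak-$*$ convergence $A_k\rightharpoonup A$ in $L^\infty_{\loc}$ does not give $|\xi|_{A_k}^{p-2}A_k\xi\to|\xi|_A^{p-2}A\xi$ even for fixed $\xi$. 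One needs a.e.\ (or locally strong $L^q$) convergence of $A_k$ --- which is what actually makes the argument work in \cite{GP1} --- and your proposal neither assumes nor derives it, merely remarking that ``the hypothesis must be read strongly enough.'' As it stands the step fails.
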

\begin{definition}
	Let $\Gw\subset \R^n$ be a bounded Lipschitz domain.
	A {\em principal eigenvalue}  of $Q_{p,A,V}$ in $\Gw$ is an eigenvalue $\lambda$ of the problem
	$$
	\begin{cases}
	Q_{p,A,V}(u)=\lambda |u|^{p-2}u & \mbox{in~} \Gw, \\
	u=0 & \mbox{on~} \partial \Gw,
	\end{cases}
	$$
	with a  nonzero  nonnegative $u$ which is called  a {\em principal eigenfunction}.
\end{definition}
\begin{proposition}[{\cite[Theorem 3.9]{PP}}]
	Let $\Gw\subset \R^n$ be a bounded Lipschitz  domain, and assume that $A$ is a uniformly elliptic, bounded matrix in $\Gw$, and $V\in M^{q}(p;\Gw)$.
	Then, the operator $Q_{p,A,V}$ admits a unique  principal eigenvalue $\lambda_1(\Gw)$.  Moreover, $\gl_1$ is simple and its principal eigenfunction is the minimizer of the Rayleigh-Ritz
	variational problem  
	$$
	\lambda_1(\Gw)=\min\limits_{u \in W^{1,p}_{0}\setminus \{0\}}\dfrac{\mathcal{Q}^{\Gw}_{p,A,V}(u)}{\|u\|^p_{L^p(\Gw)}}\, .
	$$
\end{proposition}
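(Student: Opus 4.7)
My strategy is the classical direct method plus a Picone/Diaz--Saa argument adapted to the anisotropic $(p,A)$-Laplacian. First I would set up coercivity of the Rayleigh quotient $R(u):=\mathcal{Q}^{\Gw}_{p,A,V}(u)/\|u\|_{L^p(\Gw)}^p$. Using the Morrey--Adams theorem (part (1)) with small $\delta>0$, together with the uniform ellipticity of $A$, gives
$$
\mathcal{Q}^{\Gw}_{p,A,V}(u)\;\geq\;(\Theta^{-1}-\delta)\|\nabla u\|_{L^p(\Gw)}^p - C_\delta\|u\|_{L^p(\Gw)}^p
\qquad \forall u\in W^{1,p}_0(\Gw),
$$
so $R$ is bounded below and any minimizing sequence $\{u_k\}\subset W^{1,p}_0(\Gw)$ with $\|u_k\|_{L^p}=1$ is bounded in $W^{1,p}_0(\Gw)$.

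Next, I would extract a subsequence converging weakly in $W^{1,p}_0(\Gw)$ and strongly in $L^p(\Gw)$ (Rellich--Kondrachov, valid on bounded Lipschitz domains) to some $u_1$ with $\|u_1\|_{L^p}=1$. The convex integrand $\xi\mapsto|\xi|_A^p$ makes $u\mapsto \int_\Gw|\nabla u|_A^p\dx$ weakly lower semicontinuous, while a second application of Morrey--Adams (to $V(|u_k|^p-|u_1|^p)$) combined with strong $L^p$ convergence shows $\int_\Gw V|u_k|^p\dx\to\int_\Gw V|u_1|^p\dx$. Hence $u_1$ attains the infimum $\lambda_1(\Gw)$. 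Since $|\nabla|u_1||_A=|\nabla u_1|_A$ a.e., we may replace $u_1$ by $|u_1|$ and assume $u_1\geq 0$. The Euler--Lagrange equation $Q_{p,A,V}(u_1)=\lambda_1|u_1|^{p-2}u_1$ holds weakly, and the Harnack inequality for $(p,A)$-type operators (available since $V-\lambda_1\in M^q(p;\Gw)$) upgrades $u_1\geq 0$ to $u_1>0$ in $\Gw$.

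For simplicity and uniqueness I would invoke a Picone/Diaz--Saa argument. Given any second principal pair $(\mu,v)$ with $v\geq 0$, normalize $\|v\|_{L^p}=1$ and consider $w_t:=(t u_1^p+(1-t)v^p)^{1/p}$ for $t\in[0,1]$. Because $A$ enters only through a positive-definite inner product, the Allegretto--Huang Picone inequality carries over pointwise to yield $|\nabla w_t|_A^p\leq t|\nabla u_1|_A^p+(1-t)|\nabla v|_A^p$, with equality if and only if $u_1$ and $v$ are proportional. Testing the eigenvalue equations against appropriate ratios (or equivalently plugging $w_t$ into $\mathcal{Q}^{\Gw}_{p,A,V}$ and using the Picone inequality) gives $\mu\geq\lambda_1$, hence $\mu=\lambda_1$, and then the equality case forces $v=cu_1$. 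This proves both the uniqueness of the principal eigenvalue and the one-dimensionality of its eigenspace. The principal obstacle is verifying the Picone-type inequality and its strict-equality case in the $A$-anisotropic setting; a convexity argument on the map $(s_1,s_2)\mapsto(t s_1^p+(1-t)s_2^p)^{1/p}$ combined with the Cauchy--Schwarz inequality for the inner product $\langle A\,\cdot\,,\,\cdot\,\rangle$ is the key calculation and is the only place where the anisotropy is not merely cosmetic.
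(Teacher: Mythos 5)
The paper does not prove this proposition; it is quoted verbatim as \cite[Theorem 3.9]{PP} and used as a black box, so the only meaningful comparison is against the argument in that reference. Your outline is precisely the route taken there: Morrey--Adams coercivity and Rellich--Kondrachov compactness for the direct method, Harnack to upgrade $u_1\geq 0$ to $u_1>0$, and a Picone/Diaz--Saa hidden-convexity argument (valid for $|\cdot|_A$ because it is a norm induced by a positive-definite inner product) for both uniqueness of the principal eigenvalue and simplicity. Two spots you would want to tighten when writing this out in full: the convergence $\int_\Gw V|u_k|^p\dx\to\int_\Gw V|u_1|^p\dx$ does not follow directly from applying Morrey--Adams to $V(|u_k|^p-|u_1|^p)$, but rather requires the elementary estimate $\big||a|^p-|b|^p\big|\leq C\big(|a|^{p-1}+|b|^{p-1}\big)|a-b|$ together with H\"older and the equi-integrability provided by Morrey--Adams; and the Picone step must check that the test functions built from quotients such as $u_1^p/v^{p-1}$ are admissible in $W^{1,p}_0(\Gw)$, which is where the Lipschitz regularity of $\partial\Gw$ (and boundary Harnack/regularity for $(p,A)$-eigenfunctions) enters in the cited proof.
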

The following well-known Allegretto-Piepenbrink theorem (in short, the {\bf AP} theorem) connects between the nonnegativity of $Q_{p,A,V}$ and the nonnegativity of its associated  energy functional $\mathcal{Q}^{\Gw}_{p,A,V}$ \cite[Theorem 4.3]{PP}.
\begin{theorem}[{\bf AP} theorem]
	The following assertions are equivalent.
	\begin{enumerate}
		\item $\mathcal{Q}_{p,A,V}^{\Gw}(\phi) \geq 0$ for all $\phi\in \core.$
		\item $\mathcal{C}^{Q_{p,A,V}}(\Gw)\neq \emptyset.$
		\item $\mathcal{K}^{Q_{p,A,V}}(\Gw)\neq \emptyset.$
	\end{enumerate}
\end{theorem}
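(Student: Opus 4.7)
The strategy is to close the cycle $(2) \Rightarrow (3) \Rightarrow (1) \Rightarrow (2)$. The first implication is immediate since $\mathcal{C}^{Q_{p,A,V}}(\Gw) \subset \mathcal{K}^{Q_{p,A,V}}(\Gw)$: every weak solution is automatically a nonnegative weak supersolution.

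For $(3) \Rightarrow (1)$ I would use an anisotropic Picone-type identity. Given $v \in \mathcal{K}^{Q_{p,A,V}}(\Gw)$ and $\phi \in \core$, the pointwise inequality
$$|\nabla \phi|_A^p \;\geq\; |\nabla v|_A^{p-2}\,A\nabla v \cdot \nabla\!\left(\tfrac{|\phi|^p}{v^{p-1}}\right)$$
holds almost everywhere in $\supp(\phi)$ after the standard regularization $v \mapsto v+\varepsilon$. Plugging the nonnegative compactly supported function $|\phi|^p/(v+\varepsilon)^{p-1}$ as a test function in the supersolution inequality for $v$, integrating the Picone inequality, and letting $\varepsilon \downarrow 0$ via dominated convergence yields $\mathcal{Q}^\Gw_{p,A,V}(\phi) \geq 0$. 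The $C^{1,\alpha}$ regularity of positive supersolutions under Assumptions \ref{assump2_non_linear} guarantees that all the manipulations above are justified.

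For $(1) \Rightarrow (2)$ I would fix an exhaustion $\{\Gw_k\}$ of $\Gw$ with $x_0 \in \Gw_1$. By the Rayleigh-Ritz characterization of the principal eigenvalue recalled in the preliminaries, $(1)$ forces $\lambda_k := \lambda_1(\Gw_k) \geq 0$, and domain monotonicity gives $\lambda_k \searrow \lambda_\infty \geq 0$. Let $\psi_k$ be the positive principal eigenfunction in $\Gw_k$ normalized by $\psi_k(x_0)=1$, so that $Q_{p,A,V-\lambda_k}(\psi_k)=0$ in $\Gw_k$. Since $V-\lambda_k \to V-\lambda_\infty$ in $M^q_{\loc}(p;\Gw)$, the Harnack convergence principle extracts a subsequence converging weakly in $W^{1,p}_{\loc}(\Gw)$ and in $C^{\beta}_{\loc}(\Gw)$ to a positive $\psi$ solving $Q_{p,A,V-\lambda_\infty}(\psi)=0$ in $\Gw$. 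If $\lambda_\infty = 0$, then $\psi \in \mathcal{C}^{Q_{p,A,V}}(\Gw)$ and we are done. If $\lambda_\infty > 0$, then $\psi$ is a strict positive supersolution of $Q_{p,A,V}$, and I would solve the Dirichlet problems $Q_{p,A,V}(u_k)=0$ in $\Gw_k$ with $u_k = \psi$ on $\partial\Gw_k$; comparison with the barrier $\psi$ yields $0 < u_k \leq \psi$ in $\Gw_k$, and a second application of the Harnack convergence principle after renormalization at $x_0$ produces a positive solution in $\Gw$.

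The main technical obstacle lies in this final bootstrap from supersolution to solution when $\lambda_\infty>0$: one needs a weak comparison principle for $(p,A,V)$-equations sharp enough to produce the sandwich $0 < u_k \leq \psi$ and to supply a uniform positive lower bound on $u_k(x_0)$, so that the Harnack convergence principle does not yield the trivial limit. Interior Harnack inequalities for quasilinear equations with potentials in the Morrey class $M^q_{\loc}(p;\Gw)$, powered by the Morrey-Adams theorem stated above, are the technical engine behind this step; the Picone inequality established in $(3)\Rightarrow(1)$ can also be invoked to rule out degeneracy.
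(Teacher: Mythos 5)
This theorem is not proved in the paper: it is recalled verbatim from \cite[Theorem 4.3]{PP}, so your proposal can only be measured against the standard argument there. Your implications $(2)\Rightarrow(3)$ and $(3)\Rightarrow(1)$ are correct and are exactly the standard route: the anisotropic Picone inequality applied with the test function $|\phi|^p/(v+\varepsilon)^{p-1}$, followed by dominated convergence (justified via Morrey--Adams), is the right mechanism. One small inaccuracy: positive \emph{supersolutions} are not $C^{1,\alpha}$ in general (only solutions are); what you actually need is $v\in W^{1,p}_{\loc}$, the lower semicontinuous representative with $v>0$ locally (weak Harnack), and the regularization $v+\varepsilon$ -- all of which you already use, so nothing breaks.

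The implication $(1)\Rightarrow(2)$ contains a genuine gap, and it sits exactly where you flagged it. Your eigenfunction limit handles $\lambda_\infty=0$, but when $\lambda_\infty>0$ the bootstrap via the Dirichlet problems $Q_{p,A,V}(u_k)=0$ in $\Gw_k$ with $u_k=\psi$ on $\partial\Gw_k$ cannot be repaired by any comparison principle or Harnack estimate: the sequence $u_k$ is monotone decreasing (compare $u_{k+1}$ with $u_k$ on $\Gw_k$) and its limit can genuinely be the zero function. Already for $p=2$, $A=\mathbf{1}$, $V\equiv 1$ on $\Gw=\R$ one finds $\lambda_\infty=1$, $\psi\equiv 1$, and $u_k(x)=\cosh(x)/\cosh(k)\to 0$ locally uniformly, even though $-u''+u=0$ does admit positive solutions on $\R$. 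So the "uniform positive lower bound on $u_k(x_0)$" you ask for is simply false for this construction, and the Picone inequality cannot rule out the degeneracy either. The standard repair -- and the one used in \cite[Theorem 4.3]{PP} -- is to note that $(1)$ together with strict domain monotonicity forces $\lambda_1(\Gw_k)>0$ for every $k$, then to solve the \emph{inhomogeneous} problems $Q_{p,A,V}(v_k)=f_k$ in $W^{1,p}_0(\Gw_k)$ with $0\lneqq f_k\in C_0^{\infty}(\Gw_k\setminus\overline{\Gw_{k-1}})$, so that the supports of the $f_k$ escape to infinity. After normalizing $v_k(x_0)=1$, each fixed compact set eventually lies where $v_k$ solves the homogeneous equation, and the Harnack convergence principle yields a positive solution of $Q_{p,A,V}(u)=0$ in all of $\Gw$, with no risk of a trivial limit because of the normalization.
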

\begin{definition}
	Assume that $Q_{p,A,V}\geq 0$ in $\Gw.$ We say that $Q_{p,A,V}$ is {\em subcritical} in $\Gw$ if there exists $0\lneqq W\in M^{q}_{\loc}(p;\Gw)$ such that $Q_{p,A,V-W}\geq 0$  in $\Gw$. 			We say that $Q_{p,A,V}$ is {\em critical} in $\Gw$ if for all $0\lneqq W\in M^{q}_{\loc}(p;\Gw)$ the equation 
	$Q_{p,A,V-W}(u)=0$ does not admit a positive solution in $\Gw.$
\end{definition}
\begin{definition}
	Let $\omega$ be a bounded Lipschitz domain.  We say that $Q_{p,A,V}$ satisfies the  {\em (generalized) weak maximum principle}  in $\gw$ if for any $u\in W^{1,p}(\omega)$ satisfying  $Q_{p,A,V}(u)\geq 0$ in $\gw$   and $u\geq0$ on $\partial \omega$, we have  $u\geq0$ in $\omega$.
	
	We say that  $Q_{p,A,V}$ satisfies the  {\em strong  maximum principle} in $\omega$  if for any $u\in W^{1,p}(\omega)$ satisfying  $Q_{p,A,V}(u)\!\geq\! 0$ in $\gw$   and $u\!\geq\!0$ on $\partial \omega$,  either $u \!=\! 0$, or  $u\!>\!0$ in $\omega$.
\end{definition}
\begin{lemma}[{\cite[Theorem 3.10]{PP}\label{lem:see_PP}}]
	Let $\Gw$ be a bounded Lipschitz domain, and assume that $A$ is
	a uniformly elliptic, bounded matrix in $\Gw$, and
	$V\in M^{q}(p;\Gw)$.  Then  the following assertions are equivalent.
	\begin{enumerate}
		\item $Q_{p,A,V}$ satisfies the (generalized) weak maximum principle in $\Gw$. 
		\item $Q_{p,A,V}$ satisfies the strong maximum principle in $\Gw$.
		\item The equation $Q_{p,A,V}(u)=0$ admits a positive supersolution in $W^{1,p}_{0}(\Gw)$ which is not a solution.
		\item The equation $Q_{p,A,V}(u)=0$ admits a positive supersolution in $W^{1,p}(\Gw)$ which is not a solution.
		\item $\lambda_1(\Gw)>0$.
		\item For any $0\leq g\in L^{p'}(\Gw)$, there exists a unique nonnegative solution in $W^{1,p}_{0}(\Gw)$ of $Q_{p,A,V}(u)=g$.
		
	\end{enumerate}
\end{lemma}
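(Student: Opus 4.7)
My plan is to establish the cyclic chain $(5) \Rightarrow (6) \Rightarrow (3) \Rightarrow (4) \Rightarrow (1) \Leftrightarrow (2) \Rightarrow (5)$, combining the direct method of the calculus of variations, a Picone-type inequality for the $(p,A)$-Laplacian, and Vazquez's strong maximum principle for quasilinear operators.

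For the variational block, $(5) \Rightarrow (6)$ proceeds by the direct method. Fix $0 \leq g \in L^{p'}(\Gw)$. The hypothesis $\lambda_1(\Gw) > 0$ and the Rayleigh-Ritz characterization supply the coercivity estimate
\[
\mathcal{Q}^{\Gw}_{p,A,V}(u) \geq \lambda_1(\Gw)\|u\|_{L^p(\Gw)}^p \qquad \forall u \in W^{1,p}_0(\Gw),
\]
so the functional $\mathcal{F}(u) := \frac{1}{p}\mathcal{Q}^{\Gw}_{p,A,V}(u) - \int_\Gw g u \dx$ is coercive, weakly lower semicontinuous and strictly convex on $W^{1,p}_0(\Gw)$, the Morrey-Adams theorem controlling the indefinite term $V|u|^p$ along minimizing sequences. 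The direct method produces a unique minimizer which is a weak solution of $Q_{p,A,V}(u) = g$, and replacing $u$ by $|u|$ gives nonnegativity. For $(6) \Rightarrow (3)$, picking any nonzero $g$, the associated solution $u \in W^{1,p}_0(\Gw)$ is a nonnegative supersolution of $Q_{p,A,V} = 0$ which is not a solution, and the Vazquez strong maximum principle promotes $u$ to be strictly positive. The inclusion $(3) \Rightarrow (4)$ is immediate.

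The comparison step $(4) \Rightarrow (1)$ rests on the Picone inequality. Given a positive supersolution $v \in W^{1,p}(\Gw)$ which is not a solution, and $w \in W^{1,p}(\Gw)$ with $Q_{p,A,V}(w) \geq 0$ in $\Gw$ and $w \geq 0$ on $\partial\Gw$, set $u := w^- \in W^{1,p}_0(\Gw)$. Testing the supersolution inequality for $v$ with the admissible function $\phi_\eps := u^p/(v+\eps)^{p-1}$ and invoking the pointwise Picone inequality
\[
|\nabla u|_A^p \geq |\nabla v|_A^{p-2}\langle A\nabla v, \nabla\phi_\eps\rangle,
\]
and then passing to the limit $\eps \to 0^+$, yields $\mathcal{Q}^{\Gw}_{p,A,V}(u) \geq \int (u^p/v^{p-1})\,d\mu$, where $\mu := Q_{p,A,V}(v)$ is a nonzero nonnegative Radon measure. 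Testing $Q_{p,A,V}(w) \geq 0$ with $w^-$ itself gives the opposite estimate $\mathcal{Q}^{\Gw}_{p,A,V}(u) \leq 0$; combined with the equality case of Picone (forcing $u$ and $v$ to be proportional on each connected component of $\{u > 0\}$) and the fact that $u \in W^{1,p}_0(\Gw)$ while $\mu \not\equiv 0$, this pins down $u \equiv 0$, i.e.\ $w \geq 0$ in $\Gw$. The equivalence $(1) \Leftrightarrow (2)$ is then the Vazquez strong maximum principle, and $(2) \Rightarrow (5)$ is by contradiction: if $\lambda_1(\Gw) \leq 0$, then by the $p$-homogeneity of $\Delta_{p,A}$ and the oddness of $\mathcal{I}_p$ we have $Q_{p,A,V}(-\varphi_1) = -\lambda_1\varphi_1^{p-1} \geq 0$ in $\Gw$ with $-\varphi_1 = 0$ on $\partial\Gw$, so WMP yields $\varphi_1 \leq 0$, contradicting $\varphi_1 \gneqq 0$.

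The principal technical obstacle is the rigorous deployment of the Picone identity in the present anisotropic and non-smooth setting. One must regularize $v$ to $v + \eps$ so that $\phi_\eps$ is admissible, and use the Morrey-Adams theorem to absorb the indefinite term $V(w^-)^p$; dominated convergence then enables the passage $\eps \to 0^+$, and the equality case of Picone, coupled with the hypothesis that $v$ is not a solution (so $\mu \not\equiv 0$), is what ultimately forces $u \equiv 0$ rather than merely $u$ being a scalar multiple of $v$.
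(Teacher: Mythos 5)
The paper does not prove this lemma at all; it is imported verbatim from Pinchover--Psaradakis \cite[Theorem 3.10]{PP}, so there is no internal proof to compare against. Your blind reconstruction follows the standard circle of ideas (direct method, Picone/D{\'\i}az--Saa inequality, weak Harnack), and the overall architecture of the cyclic chain is reasonable, but two steps as written have genuine gaps.

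First, in $(5)\Rightarrow(6)$ you claim the functional $\mathcal{F}(u)=\tfrac1p\mathcal{Q}^{\Gw}_{p,A,V}(u)-\int_\Gw g u\dx$ is \emph{strictly convex}. This is false whenever $V$ has a nontrivial negative part: $u\mapsto\int_\Gw V|u|^p\dx$ is concave on the set $\{V<0\}$, so $\mathcal{Q}^{\Gw}_{p,A,V}$ is not convex in general, and convexity cannot be rescued by the condition $\lambda_1(\Gw)>0$ (which only gives $L^p$-coercivity on the nonlinear ``sphere'', not convexity). Existence of a minimizer still goes through by coercivity plus weak lower semicontinuity (which requires some care because $\int V|u|^p$ is not weakly lower semicontinuous term by term, only together with the gradient term via Morrey--Adams), but \emph{uniqueness} of the nonnegative solution must come from somewhere else, e.g.\ the D{\'\i}az--Saa/anisotropic Picone inequality applied to a pair of solutions, or by first establishing the weak comparison principle. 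In the logical order you chose, $(6)$ is needed before $(1)$ is available, so you must argue uniqueness directly via D{\'\i}az--Saa rather than via convexity.

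Second, in $(4)\Rightarrow(1)$ the passage from ``equality in Picone'' to ``$u\equiv 0$'' is the crux and is only gestured at. Equality in Picone yields that $u/v$ is locally constant on each connected component of $\{u>0\}$, but you also need to rule out the possibility that $u=c\,v$ on a component $\omega$ with $c>0$: one uses that $v>0$ and continuous in $\Gw$ while $u\in W^{1,p}_0(\Gw)$ vanishes (in trace) on $\partial\omega\cap\Gw$, forcing $\partial\omega\subset\partial\Gw$, hence $\omega=\Gw$ (by connectivity), hence $v\in W^{1,p}_0(\Gw)$; one then derives a contradiction with $v$ being a supersolution which is \emph{not} a solution (e.g.\ by testing $Q_{p,A,V}(v)=\mu\gneqq 0$ against $v$ itself). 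Also note that identifying $Q_{p,A,V}(v)$ with a nonzero nonnegative Radon measure $\mu$ requires a Riesz representation step that should be stated. These points are all repairable within the framework you chose, and they match the line of argument in \cite{PP}; you should simply replace ``strict convexity'' with a D{\'\i}az--Saa argument and flesh out the connected-component analysis in the Picone equality case.
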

\begin{corollary}
	If there exists a weak  positive  (super)solution of $Q_{p,A,V} (u) =0$ in a domain $\Gw\subset \R^n$,  then 
	$\lambda_1(\Gw')>0$ for any bounded Lipschitz subdomain $\Gw'\Subset \Gw$.
\end{corollary}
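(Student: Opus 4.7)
The plan is to proceed by contradiction, combining the AP theorem (which yields $\lambda_{1}(\Omega')\ge 0$) with a Picone-type identity for the $(p,A)$-Laplacian (which upgrades this to a strict inequality). Since $u>0$ is a (super)solution of $Q_{p,A,V}(u)=0$ in $\Omega\supset\Omega'$, its restriction to $\Omega'$ is a positive (super)solution there; the AP theorem therefore gives $\mathcal{Q}_{p,A,V}^{\Omega'}(\phi)\ge 0$ for every $\phi\in C_{0}^{\infty}(\Omega')$, and the Rayleigh-Ritz characterization of $\lambda_{1}(\Omega')$ from the Proposition preceding the AP theorem yields $\lambda_{1}(\Omega')\ge 0$. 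To rule out equality, I would assume for contradiction that $\lambda_{1}(\Omega')=0$; the same Proposition then produces a principal eigenfunction $\phi_{1}\in W^{1,p}_{0}(\Omega')$ with $\phi_{1}>0$ in $\Omega'$, $Q_{p,A,V}(\phi_{1})=0$ weakly in $\Omega'$, and $\mathcal{Q}_{p,A,V}^{\Omega'}(\phi_{1})=0$.

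The key step is to exploit that $u\in C^{1,\alpha}(\Omega)$ is continuous and strictly positive on the compact set $\overline{\Omega'}\subset\Omega$, so there exists $c_{0}>0$ with $u\ge c_{0}$ on $\overline{\Omega'}$; in particular $\phi_{1}^{\,p}/u^{\,p-1}\in W^{1,p}_{0}(\Omega')$ is an admissible nonnegative test function in the supersolution inequality for $u$ on $\Omega'$. Combining this test with the pointwise Picone-type inequality for the $(p,A)$-Laplacian,
$$
|\nabla \phi_{1}|_{A}^{p}\;\ge\;|\nabla u|_{A}^{p-2}\,A\nabla u\cdot \nabla\!\left(\frac{\phi_{1}^{\,p}}{u^{\,p-1}}\right)\qquad \text{a.e. in }\Omega',
$$
and integrating yields
$$
0=\mathcal{Q}_{p,A,V}^{\Omega'}(\phi_{1})\ge\int_{\Omega'}|\nabla u|_{A}^{p-2}A\nabla u\cdot \nabla\!\left(\frac{\phi_{1}^{\,p}}{u^{\,p-1}}\right)\dx+\int_{\Omega'}V\phi_{1}^{\,p}\dx\ge 0,
$$
forcing equality pointwise in Picone. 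Its equality case gives $\phi_{1}\equiv c\,u$ in $\Omega'$ for some constant $c>0$, which contradicts the boundary information: $\phi_{1}\in W^{1,p}_{0}(\Omega')$ has vanishing trace on $\partial\Omega'$, whereas $cu\ge c\,c_{0}>0$ there.

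The main obstacle is justifying the pointwise Picone identity together with its equality case for the $(p,A)$-Laplacian with a non-constant matrix $A$ and a potential $V\in M^{q}_{\loc}(p;\Omega)$; once this is available, as in \cite{DP,PP}, the remainder of the proof is routine.
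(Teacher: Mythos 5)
Your argument is correct and is the standard one given the paper's toolkit; since the corollary is stated without a written proof, I compare against the natural argument implicit in the references (\cite{PP}) on which Lemma 2.17 and the {\bf AP} theorem are built, and your route matches it: AP plus Rayleigh--Ritz gives $\lambda_1(\Omega')\ge 0$, and strictness is obtained from the equality case of Picone's inequality together with the fact that $u$ is bounded away from zero on $\overline{\Omega'}$.

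Two small points worth tightening. First, for a genuine \emph{super}solution $u$ you cannot quote $C^{1,\alpha}$-interior regularity (the paper states this only for solutions); but this is harmless, since the weak Harnack inequality for nonnegative supersolutions of $Q_{p,A,V}$ (valid under Assumptions \ref{assump2_non_linear}) already yields $\operatorname{ess\,inf}_{\overline{\Omega'}}u>0$, which is all that is used. Second, your worry about Picone for a non-constant $A$ and a Morrey potential is unfounded at the pointwise level: the inequality
\[
|\nabla \phi_1|_A^p-|\nabla u|_A^{p-2}A\nabla u\cdot\nabla\!\left(\frac{\phi_1^p}{u^{p-1}}\right)\ge 0,
\]
with equality iff $\nabla(\phi_1/u)=0$, is a purely algebraic consequence of Cauchy--Schwarz and Young in the $A$-inner product at each $x$, and $V$ enters only through the admissibility of the test function $\phi_1^p/u^{p-1}\in W^{1,p}_0(\Omega')$, which holds because $\phi_1\in W^{1,p}_0(\Omega')\cap L^\infty(\Omega')$ and $u\ge c_0>0$ there; the supersolution inequality extends from $C_0^\infty$ to such test functions by density and the Morrey--Adams estimate. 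With these standard justifications your proof is complete.
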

\begin{definition}
	Let $K_0$  be a compact subset of $\Gw$. A positive solution $u$ of $Q_{p,A,V}(u)=0$ in $\Gw \setminus K_0$ is said to be a {\em positive
		solution of minimal growth in a neighborhood of infinity in $\Gw$}, and denoted by $u\in \mathcal{MG}_{A,V,\Gw;K_0}$, if for any smooth
	compact subset $K$ of $\Gw$ with $K_0 \Subset \mathrm{int} (K)$, and any positive  supersolution $v \in C(\Gw \setminus  K)$ of $Q_{p,A,V}(w)=0$ in $\Gw \setminus K$, we
	have
	$$
	u\leq v \mathrm{~on~} \partial K \Longrightarrow u\leq v \mathrm{~in~} \Gw \setminus K.
	$$
	If $u\in \mathcal{MG}_{A,V,\Gw ; \emptyset},$ then $u$ is called an {\em Agmon ground state} of $Q_{p,A,V}$ in $\Gw.$
\end{definition}
\begin{lemma}[{\cite[Proposition 3.17]{GP1}}]
	Let $V\in M^{q}_{\loc}(p;\Gw)$, and suppose that $Q_{p,A,V} \geq 0$ in $\Gw.$ Then for any $x_0\in \Gw$ the equation $Q_{p,A,V}(w)=0$ admits a unique (up to a multiplicative constant)  solution $u\in \mathcal{MG}_{A,V,\Gw ; \{x_0\}}. $
\end{lemma}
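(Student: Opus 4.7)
The strategy is to construct $u$ as a Harnack limit of solutions of approximate Dirichlet problems on an exhaustion of $\Gw\setminus\{x_0\}$, and then to extract both the minimal-growth property and the uniqueness from the comparison principle associated with $Q_{p,A,V}$.

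\emph{Step 1 (approximating solutions).} Fix an exhaustion $\{\Gw_k\}_{k\in\N}$ of $\Gw$ with $x_0\in\Gw_1$, a radius $r_0>0$ with $\overline{B_{r_0}(x_0)}\Subset\Gw_1$, a reference point $y_0\in\Gw_1\setminus\overline{B_{r_0}(x_0)}$, and a sequence $r_k\downarrow 0$ with $r_k<r_0$. Set $\omega_k:=\Gw_k\setminus\overline{B_{r_k}(x_0)}$. Since $Q_{p,A,V}\geq 0$ in $\Gw$ supplies a positive supersolution, the corollary following Lemma~\ref{lem:see_PP} gives $\lambda_1(\omega_k)>0$, and hence by Lemma~\ref{lem:see_PP} (combined with the standard lift of inhomogeneous Dirichlet data to a homogeneous problem) the Dirichlet problem
\begin{equation*}
Q_{p,A,V}(v_k)=0\ \text{in}\ \omega_k,\quad v_k=1\ \text{on}\ \partial B_{r_k}(x_0),\quad v_k=0\ \text{on}\ \partial\Gw_k,
\end{equation*}
admits a unique weak solution $v_k$, which is strictly positive by the strong maximum principle. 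Because $Q_{p,A,V}$ is $(p{-}1)$-homogeneous, the rescaled sequence $u_k:=v_k/v_k(y_0)$ is a family of positive solutions on $\omega_k$ with $u_k(y_0)=1$. The Harnack convergence principle then produces a subsequence converging, locally uniformly on $\Gw\setminus\{x_0\}$, to a positive weak solution $u$ of $Q_{p,A,V}(w)=0$ in $\Gw\setminus\{x_0\}$ with $u(y_0)=1$.

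\emph{Step 2 (minimal growth).} Let $K\subset\Gw$ be a smooth compact set with $x_0\in\mathrm{int}(K)$, and let $v\in C(\Gw\setminus K)$ be a positive supersolution with $u\leq v$ on $\partial K$. Given $\delta>0$, uniform convergence $u_k\to u$ on the compact set $\partial K$ together with the positivity of $\min_{\partial K}v$ yields $u_k\leq(1+\delta)v$ on $\partial K$ for all large $k$. On $\partial\Gw_k$ we have $u_k=0\leq(1+\delta)v$, and $(1+\delta)v$ is itself a positive supersolution on $\omega_k\setminus K$ by $(p{-}1)$-homogeneity. The weak comparison principle for $Q_{p,A,V}$ on the bounded Lipschitz domain $\omega_k\setminus K$ (available since $\lambda_1(\omega_k\setminus K)>0$, cf.\ Lemma~\ref{lem:see_PP}) then forces $u_k\leq(1+\delta)v$ in $\omega_k\setminus K$. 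Letting $k\to\infty$ and afterwards $\delta\to 0$ gives $u\leq v$ in $\Gw\setminus K$, so that $u\in\mathcal{MG}_{A,V,\Gw;\{x_0\}}$.

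\emph{Step 3 (uniqueness, and the main difficulty).} Let $\tilde u\in\mathcal{MG}_{A,V,\Gw;\{x_0\}}$ be a second candidate and fix any smooth compact $K\ni x_0$. Setting $\alpha_K:=\sup_{\partial K}u/\tilde u<\infty$, the function $\alpha_K\tilde u$ is again a positive solution and dominates $u$ on $\partial K$, so the minimal-growth property of $u$ forces $u\leq\alpha_K\tilde u$ in $\Gw\setminus K$; symmetrically $\tilde u\leq\beta_K u$ in $\Gw\setminus K$ for some finite $\beta_K$. To upgrade these two-sided bounds to $u=c\tilde u$ for a single constant $c$, I would shrink $K$ to $\{x_0\}$ along a sequence and invoke a Picone/D\'iaz--Saa rigidity argument for $Q_{p,A,V}$ (the same mechanism underlying the simplicity of $\lambda_1$ quoted before Lemma~\ref{lem:see_PP}). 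I expect this rigidity step to be the main obstacle: the elementary comparison only delivers a bounded ratio $u/\tilde u$, and turning boundedness into constancy requires exploiting the equality case of a Picone-type inequality for the $(p,A)$-Laplacian with potential, which is the only genuinely nonlinear ingredient of the argument.
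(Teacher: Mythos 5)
First, note that the paper does not prove this lemma at all: it is quoted verbatim from \cite[Proposition 3.17]{GP1}, so there is no internal proof to compare against. Your Steps 1 and 2 reproduce the standard existence argument used there (Dirichlet problems on the annuli $\Gw_k\setminus\overline{B_{r_k}(x_0)}$, normalization at a reference point, Harnack convergence, and comparison on $\Gw_k\setminus K$ to transfer the boundary inequality out to infinity). Apart from the routine caveat that Lemma \ref{lem:WCP_non} is stated for right-hand sides $0\le g\in L^\infty(\Gw)$ whereas a general supersolution $v$ only satisfies $Q_{p,A,V}(v)\ge 0$ distributionally (so one needs the supersolution version of the weak comparison principle from \cite{PP}), this part is essentially correct.

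The genuine gap is Step 3, and although you have correctly located it, your proposed repair does not work. The elementary comparison gives $\mu(r)\,\tilde u\le u\le m(r)\,\tilde u$ in $\Gw\setminus B_r(x_0)$, where $m(r)=\max_{\partial B_r(x_0)}u/\tilde u$ and $\mu(r)=\min_{\partial B_r(x_0)}u/\tilde u$; but these very inequalities show that $m(r)$ is nondecreasing and $\mu(r)$ is nonincreasing as $r\downarrow 0$, so the two-sided bound \emph{widens} rather than pinches as $K$ shrinks to $\{x_0\}$, and no single constant $c$ is produced. A Picone/D\'iaz--Saa rigidity argument is also not available here: both functions may be singular at $x_0$, the relevant integrals over the punctured domain need not converge, and Picone-type rigidity requires an exact global energy identity that you do not have. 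What actually closes the argument in \cite{GP1} (and in \cite{FP1} for $A=I$, $V=0$) is the local analysis of positive solutions near the isolated singularity: one shows that either the singularity at $x_0$ is removable for both $u$ and $\tilde u$, or both are comparable to the same singular profile, and in either case $\lim_{x\to x_0}u(x)/\tilde u(x)=:L\in(0,\infty)$ exists. Then for every $\vge>0$ one has $u\le(L+\vge)\tilde u$ on $\partial B_r(x_0)$ for small $r$, minimal growth of $u$ propagates this inequality to $\Gw\setminus B_r(x_0)$, and letting $r\to 0$, $\vge\to 0$ and arguing symmetrically yields $u=L\tilde u$. Without this classification of the behaviour at the isolated singularity --- which is the substantive content of \cite{GP1} --- the uniqueness claim is not established.
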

\begin{definition}\label{def:Green}
	A function $u\in \mathcal{MG}_{A,V,\Gw ; \{x_0\}}$ having a nonremovable singularity at $x_0$ is called a {\em minimal positive Green function of $Q_{p,A,V}$ in $\Gw$ with  singularity  at $x_0.$} We denote such a function by $G^{\Gw}_{Q_{p,A,V}}(x,x_0).$
\end{definition}
\begin{lemma}[{\cite[Theorem 5.9]{PP}}]
	Suppose that $Q_{p,A,V} \geq 0$ in $\Gw$. Then $Q_{p,A,V}$ is critical in $\Gw$ if and only if the equation $Q_{p,A,V}=0$ admits a ground state in $\Gw$. 
\end{lemma}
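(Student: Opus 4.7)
The plan is to prove the two implications separately. The ``$\Leftarrow$'' direction (ground state $\Rightarrow$ critical) is proved by contradiction via a Picone-type identity for the $(p,A)$-Laplacian together with a null-sequence argument, while the ``$\Rightarrow$'' direction (critical $\Rightarrow$ ground state) extracts a ground state as a Harnack limit of normalized Green functions on an exhaustion and uses a dichotomy at the singularity to rule out the alternative.

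\textbf{Ground state implies critical.} Let $\phi\in \mathcal{MG}_{A,V,\Gw;\emptyset}$ be a ground state, and assume for contradiction that $Q_{p,A,V}$ is subcritical, so there exists $0\lneqq W\in M^{q}_{\loc}(p;\Gw)$ with $Q_{p,A,V-W}\ge 0$ in $\Gw$. By the {\bf AP} theorem there exists a positive $v$ solving $Q_{p,A,V-W}(v)=0$; then $v$ is a positive supersolution of $Q_{p,A,V}(w)=0$ which is \emph{not} a solution, since $Q_{p,A,V}(v)=W\mathcal{I}_p(v)\gneqq 0$. The minimal-growth property of $\phi$ forces $\phi\le cv$ in $\Gw$ for some $c>0$ (after fixing a large smooth compact $K$ and adjusting on $\partial K$). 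I would then apply a Picone-type identity for the $(p,A)$-Laplacian in integrated form, tested against a compactly supported approximation of $\phi$: this produces a null sequence $\phi_k\in C_0^\infty(\Gw)$ satisfying
\[
\lim_{k\to\infty}\mathcal{Q}_{p,A,V}^{\Gw}(\phi_k)=0, \qquad \phi_k\to \phi \ \ \text{in } L^{p}_{\loc}(\Gw),
\]
and feeding this sequence back into the Picone inequality (which bounds $\mathcal{Q}_{p,A,V}^{\Gw}$ from below by $\int W\phi_k^p\dx$ modulo error terms that vanish along the subsequence) yields, in the limit, the contradiction
\[
0\;\ge\;\int_{\Gw} W\phi^p \dx >0.
\]

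\textbf{Critical implies ground state exists.} Fix $x_0 \in \Gw_1$, a point $x_1\in \Gw_1\setminus\{x_0\}$, and an exhaustion $\{\Gw_j\}_{j\in\N}$ of $\Gw$. On each $\Gw_j$ the operator $Q_{p,A,V}$ is subcritical (since $\lambda_1(\Gw_j)>0$ by the corollary preceding this lemma) and hence admits a positive minimal Green function $G_j=G^{\Gw_j}_{Q_{p,A,V}}(\cdot,x_0)$. Normalize $u_j:=G_j/G_j(x_1)$ and apply the Harnack convergence principle on $\Gw\setminus \{x_0\}$: a subsequence converges locally uniformly (and weakly in $W^{1,p}_{\loc}(\Gw\setminus\{x_0\})$) to a positive solution $u$ of $Q_{p,A,V}(w)=0$ in $\Gw\setminus \{x_0\}$ with $u(x_1)=1$. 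A dichotomy arises: either $u$ has a nonremovable singularity at $x_0$, in which case $u=G^{\Gw}_{Q_{p,A,V}}(\cdot,x_0)$ is a minimal positive Green function in $\Gw$, and a standard perturbation construction furnishes $0\lneqq W\in M^{q}_{\loc}(p;\Gw)$ with $Q_{p,A,V-W}\ge 0$, contradicting criticality; or the singularity of $u$ at $x_0$ is removable, so $u$ extends to a positive $(p,A,V)$-harmonic function on all of $\Gw$. In the latter case I would verify $u\in \mathcal{MG}_{A,V,\Gw;\emptyset}$ by passing to the limit the minimal-growth property of each $u_j$ in $\Gw_j$: for a compact $K\subset \Gw$ and a positive supersolution $w$ of $Q_{p,A,V}(\cdot)=0$ on $\Gw\setminus K$ with $u\le w$ on $\partial K$, the comparison $u_j\le (1+\eta)w$ on $\Gw_j\setminus K$ (valid for $j$ large and $\eta>0$ small, since $u_j\in \mathcal{MG}_{A,V,\Gw_j;\{x_0\}}$) descends to $u\le w$ on $\Gw\setminus K$ upon letting $j\to\infty$ and then $\eta\to 0$.

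\textbf{Main obstacle.} The hardest step is the construction of the null sequence in the first direction: the energy $\mathcal{Q}_{p,A,V}^{\Gw}$ is only nonnegative, not coercive, and truncation of $\phi$ against a cutoff creates a gradient term on the support of the cutoff which has to be annihilated by a cancellation with the potential term. This requires a careful interplay between the Picone identity, the minimal-growth property of $\phi$, and the Morrey--Adams estimates on $V$ recorded in Section~\ref{sec2}. The nonlinearity of the $(p,A)$-Laplacian makes this substantially more delicate than in the linear case, and is the technical heart of the criticality theory invoked here.
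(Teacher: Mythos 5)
The paper does not prove this lemma; it is stated as a direct citation of \cite[Theorem 5.9]{PP}, so there is no in-paper argument to compare against, and your proposal is a from-scratch reconstruction. The overall architecture you choose (a Picone/null-sequence argument in one direction, an exhaustion--Green-function dichotomy in the other) is plausible and compatible with the machinery recalled in Section~\ref{sec2}. However, the decisive step in the ``ground state implies critical'' direction is missing. You correctly flag the construction of a null sequence from the ground state $\phi$ as ``the technical heart,'' and then do not carry it out: invoking ``a Picone-type identity \ldots tested against a compactly supported approximation of $\phi$'' is not a construction. The simplified-energy inequality (Lemma~\ref{lem:simp_equiv}) lets one \emph{estimate} $\mathcal{Q}_{p,A,V}(\phi\eta_k)$ once a family of cutoffs $\eta_k$ has been chosen, but the whole difficulty is the choice of $\eta_k$ so that the energy tends to zero -- the analogue of the logarithmic cutoff through level sets used in Lemma~\ref{optmal Lp}, but here driven by the minimal-growth property of $\phi$ rather than by a Green potential -- and your proposal leaves this open. (Note also that once a null sequence exists, Lemma~\ref{lem:null_seq_non} yields criticality directly, so the final ``feed back into Picone and pass to the limit of $\int W\phi_k^p\dx$'' step is not needed.)

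The second direction is closer to complete but still has a gap at the end. Each approximant $u_j$ lies in $\mathcal{MG}_{A,V,\Gw_j;\{x_0\}}$, so the comparison you pass to the limit is available only for compacta $K$ with $x_0\in\mathrm{int}(K)$, whereas membership in $\mathcal{MG}_{A,V,\Gw;\emptyset}$ requires the comparison for \emph{all} nonempty compacta $K\Subset\Gw$, including those avoiding $x_0$. Upgrading from the exceptional set $\{x_0\}$ to the empty set after removing the singularity needs an additional argument (for instance, enlarging $K$ to contain $x_0$ and using the strong maximum principle and boundedness of $u$ near $x_0$) that your sketch does not supply. You also appeal to ``a standard perturbation construction'' to conclude subcriticality from the existence of a positive minimal Green function, without naming it; that step (existence of a Green function $\Rightarrow$ existence of $W\gneqq 0$ with $Q_{p,A,V-W}\geq 0$) is itself part of the criticality theory being established and should be spelled out. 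Both gaps are likely fixable with tools already in Section~\ref{sec2}, but as written the argument is incomplete exactly at the points that make the theorem nontrivial.
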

\begin{definition}
	A sequence $\{\phi_k \}_{k\in \N}\subset C_0^{\infty}(\Gw)$ is called a {\em null-sequence}	 with respect to a nonnegative operator $Q_{p,A,V}$ in $\Gw$ if 
	\begin{enumerate}
		\item $\phi_k \geq 0$ for all $k\in \N,$
		\item there exists a fixed open set $B\Subset \Gw$ such that $\| \phi_k\|_{L^p(B)}\asymp 1$ for all $k\in N,$
		\item $\lim\limits_{k\to \infty}\mathcal{Q}^{\Gw}_{p,A,V}(\phi_k)=0.$ 
	\end{enumerate}
\end{definition}
\begin{lemma}[{\cite[Theorem 4.15]{PP}\label{lem:null_seq_non}}]
	A nonnegative operator $Q_{p,A,V}$ is critical in $\Gw$ if and only if $Q_{p,A,V}$ admits a null-sequence in $\Gw.$
\end{lemma}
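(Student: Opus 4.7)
The plan is to prove the two implications separately. Both rely on the equivalence between criticality and the existence of an Agmon ground state already invoked in the excerpt, on the local coercivity coming from the Morrey–Adams theorem, and on a generalized Picone identity for the $(p,A)$-Laplacian.

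For the direction \emph{critical $\Rightarrow$ null-sequence}, I would start from an Agmon ground state $\psi \in \mathcal{MG}_{A,V,\Gw;\emptyset}$, which exists by criticality. Fix a reference ball $B \Subset \Gw$ and choose cut-offs $\eta_k \in C_0^\infty(\Gw)$ with $\eta_k \equiv 1$ on $\Gw_k$, supported in $\Gw_{k+1}$, and satisfying $\int_\Gw |\nabla \eta_k|_A^p \psi^p \dx \to 0$. Set $\phi_k := \eta_k \psi$. A generalized Picone-type identity for the $(p,A)$-Laplacian, together with $Q_{p,A,V}(\psi)=0$, then gives
$$\mathcal{Q}^{\Gw}_{p,A,V}(\phi_k) \;\leq\; C\int_\Gw |\nabla \eta_k|_A^p \psi^p \dx \;\longrightarrow\; 0,$$
while $\|\phi_k\|_{L^p(B)} \to \|\psi\|_{L^p(B)} > 0$, so $\{\phi_k\}$ is the desired null-sequence.

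For the converse, assume $\{\phi_k\}$ is a null-sequence and, for contradiction, that $Q_{p,A,V}$ is subcritical, so there exists $0 \lneqq W \in M^{q}_{\loc}(p;\Gw)$ with $Q_{p,A,V-W} \geq 0$ in $\Gw$. The associated Hardy inequality, tested against $\phi_k$, yields
$$\int_\Gw W|\phi_k|^p \dx \;\leq\; \mathcal{Q}^{\Gw}_{p,A,V}(\phi_k) \longrightarrow 0.$$
Using the Morrey–Adams theorem to control $\int V|\phi_k|^p$ by $\|\nabla \phi_k\|_{L^p}$ and $\|\phi_k\|_{L^p}$, and using the fixed normalization $\|\phi_k\|_{L^p(B)} \asymp 1$, I would obtain a uniform $W^{1,p}_{\loc}(\Gw)$-bound, so a subsequence converges weakly in $W^{1,p}_{\loc}(\Gw)$ and strongly in $L^p_{\loc}(\Gw)$ to some $\phi_\infty \not\equiv 0$. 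Weak lower semicontinuity gives $\mathcal{Q}^{\Gw}_{p,A,V}(\phi_\infty) \leq 0$ and $\int_\Gw W\phi_\infty^p \dx = 0$. Thus $\phi_\infty$ is a nontrivial nonnegative minimizer and hence a weak solution of $Q_{p,A,V}(u)=0$; by the Harnack inequality it is strictly positive in $\Gw$, contradicting $\phi_\infty\equiv 0$ on the positive-measure set $\{W>0\}$.

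The main obstacle I expect is the first direction, and specifically the construction of cut-offs $\eta_k$ for which $\int |\nabla \eta_k|_A^p \psi^p \dx \to 0$. When $\psi$ does not decay at the ideal point $\overline{\infty}$, naive linear cut-offs leave an $O(1)$ contribution; one must use logarithmic cut-offs built from level sets of $\psi$, of the form $\eta_k = \max\bigl(0,\min\bigl(1,(\log \psi - \log a_k)/(\log b_k - \log a_k)\bigr)\bigr)$ with suitable thresholds $a_k,b_k\to 0$. Verifying that such cut-offs are admissible within the $M^{q}_{\loc}(p;\Gw)$-regularity class of $V$, and pinning down a sharp enough Picone-type inequality in the anisotropic norm $|\cdot|_A$, is the technical heart of the argument; the compactness and Harnack steps in the converse are standard.
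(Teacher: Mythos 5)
The paper itself offers no proof of this lemma; it is quoted verbatim from \cite[Theorem~4.15]{PP}, whose proof in turn builds on the machinery of Pinchover--Tintarev \cite{PT} and the simplified energy of \cite{PR}. So there is no ``paper's own proof'' to match you against, and your attempt has to stand on its own.

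For the direction ``null-sequence $\Rightarrow$ critical,'' the overall shape of your argument is reasonable, but the route through weak lower semicontinuity and a ``minimizer of a degenerate functional is a weak solution'' step is shaky: the energy $\mathcal{Q}^{\Gw}_{p,A,V}$ is indefinite, $\int_\Gw V|\phi_k|^p\dx$ does not obviously pass to the limit under $L^p_{\loc}$-convergence when $V$ is merely in $M^q_{\loc}(p;\Gw)$, and the limit $\phi_\infty$ is a priori neither compactly supported nor in a space where the Euler--Lagrange equation is known to hold. The standard and cleaner route is the one the paper's toolbox is clearly set up for: fix a positive solution $v\in\mathcal{C}^{Q_{p,A,V}}(\Gw)$ and pass to the simplified energy $\mathcal{Q}^{\Gw}_{\mathrm{sim},p,A,V}(\phi_k/v)\asymp\mathcal{Q}^{\Gw}_{p,A,V}(\phi_k)\to 0$, whose integrand is manifestly nonnegative; one then extracts convergence $\phi_k/v\to\mathrm{const}$, shows $v$ is a ground state, and derives the contradiction $\int_\Gw Wv^p\dx=0$ for any admissible $W$. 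You do not invoke Lemma~\ref{lem:simp_equiv} at all, which is a significant omission.

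For the harder direction ``critical $\Rightarrow$ null-sequence,'' your plan has a genuine gap that you yourself flag but do not close. You assume that one can choose cut-offs $\eta_k$ with $\int_\Gw|\nabla\eta_k|_A^p\psi^p\dx\to 0$; but the existence of such cut-offs is precisely the statement that a $\psi^p$-weighted $p$-capacity vanishes, and this is not a consequence of $\psi$ being a minimal-growth solution --- it is essentially equivalent to what you are trying to prove. Your concrete recipe, $\eta_k=\max\bigl(0,\min\bigl(1,(\log\psi-\log a_k)/(\log b_k-\log a_k)\bigr)\bigr)$, degenerates precisely in the model case $\psi\equiv 1$ (e.g.\ $-\Delta_n$ on $\R^n$), where the correct logarithmic cut-offs must be built from the Green-function profile $|x|$, not from $\psi$. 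There is also a minor flaw in the estimate $\mathcal{Q}(\eta_k\psi)\leq C\int|\nabla\eta_k|_A^p\psi^p\dx$: by Corollary~\ref{cor:simp_non} this only holds for $1<p\leq 2$; for $p>2$ there is an extra mixed term $X^{2/p}Y^{(p-2)/p}$ that also has to be controlled. The actual argument in \cite{PT,PP} for this direction is a functional-analytic dichotomy: one shows that if no null-sequence exists then a local ``spectral-gap'' inequality on annuli glues together to a global Hardy inequality $\mathcal{Q}^{\Gw}_{p,A,V}(\phi)\geq\int W|\phi|^p\dx$ with $W\gneqq 0$, yielding subcriticality. That mechanism does not appear in your outline and cannot be supplied by the cut-off construction you propose.
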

The next lemma shows that the energy functional $\mathcal{Q}^{\Gw}_{p,A,V}$ is equivalent to a {\em simplified energy} that does not explicitly depend on $V$ and contains only nonnegative terms.
\begin{lemma}[{\cite[Lemma 3.4]{PR}\label{lem:simp_equiv}}]
	Let  $v\in \mathcal{C}^{Q_{p,A,V}}(\Gw)$. Then, for any $0\leq u\in W^{1,p}_{\loc}(\Gw)$ having compact support in $\Gw$, and 
	such that $w:=u/v\in L^{\infty}_{\loc}(\Gw)$, we have 
	\begin{equation}\label{simp_energy}
	\mathcal{Q}^{\Gw}_{p,A,V}(u)\asymp
	\mathcal{Q}^{\Gw}_{\mathrm{sim},p,A,V}(w) 
	:= \int_{\Gw}v^2|\nabla w|_A^2
	\left ( w|\nabla v|_A+v|\nabla w|_A\right )^{p-2} \dx.
	\end{equation}
\end{lemma}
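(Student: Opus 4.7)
The plan is to establish the equivalence via a pointwise matrix-weighted Picone--Lindqvist inequality combined with testing the equation $Q_{p,A,V}(v)=0$ against $\phi = w^p v$. Writing $u = wv$, we have $\nabla u = v\nabla w + w\nabla v$; set $E := \nabla u$ and $F := w\nabla v$, so that $E - F = v\nabla w$.

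The first step is the pointwise inequality: there exist constants $c_1,c_2>0$ depending only on $p$ such that for every $\xi,\eta\in\R^n$,
\[
c_1(|\xi|_A + |\eta|_A)^{p-2}|\xi-\eta|_A^2 \;\leq\; |\xi|_A^p - |\eta|_A^p - p|\eta|_A^{p-2}\langle A\eta,\xi-\eta\rangle \;\leq\; c_2(|\xi|_A + |\eta|_A)^{p-2}|\xi-\eta|_A^2.
\]
This follows from the classical scalar Lindqvist inequality under the change of variables $\tilde\xi = A^{1/2}\xi$, $\tilde\eta = A^{1/2}\eta$. Applied with $\xi = E$, $\eta = F$, and using $|E-F|_A = v|\nabla w|_A$, $|F|_A = w|\nabla v|_A$, together with $|E|_A + |F|_A \asymp v|\nabla w|_A + w|\nabla v|_A$ (triangle inequality), the quantity
\[
R(w,v) \;:=\; |\nabla u|_A^p - w^p|\nabla v|_A^p - pvw^{p-1}|\nabla v|_A^{p-2}\langle A\nabla v,\nabla w\rangle
\]
satisfies $R(w,v) \asymp v^2|\nabla w|_A^2(w|\nabla v|_A + v|\nabla w|_A)^{p-2}$ pointwise.

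The second step is to interpret the linear-in-$w$ portion of $R$ through the equation. Expanding directly,
\[
|\nabla v|_A^{p-2}\langle A\nabla v,\nabla(w^p v)\rangle = pvw^{p-1}|\nabla v|_A^{p-2}\langle A\nabla v,\nabla w\rangle + w^p|\nabla v|_A^p,
\]
so $R(w,v) = |\nabla u|_A^p - |\nabla v|_A^{p-2}\langle A\nabla v,\nabla(w^p v)\rangle$. Since $u$ has compact support $K\Subset\Gw$ and $v>0$, the function $w$ vanishes outside $K$; combined with $w\in L^\infty_{\loc}(\Gw)$ this makes $w$ bounded with compact support. Using $v\nabla w = \nabla u - w\nabla v$ one writes $\nabla(w^p v) = pw^{p-1}(\nabla u - w\nabla v) + w^p\nabla v \in L^p(\Gw)$, whence $\phi := w^p v \in W^{1,p}_0(\Gw)$ is admissible in the weak formulation of $Q_{p,A,V}(v)=0$. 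This yields
\[
\int_\Gw |\nabla v|_A^{p-2}\langle A\nabla v,\nabla(w^p v)\rangle \dx = -\int_\Gw V v^{p-1}(w^p v) \dx = -\int_\Gw V u^p \dx.
\]
Integrating the pointwise identity $|\nabla u|_A^p = |\nabla v|_A^{p-2}\langle A\nabla v,\nabla(w^p v)\rangle + R(w,v)$ and substituting gives $\mathcal{Q}^\Gw_{p,A,V}(u) = \int_\Gw R(w,v)\dx \asymp \mathcal{Q}^\Gw_{\mathrm{sim},p,A,V}(w)$.

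The main technical obstacle is the admissibility of $\phi = w^p v$ as a test function: one must check that $\phi$ can be approximated in $W^{1,p}_0(\Gw)$ by elements of $\core$ so that passing to the limit in the weak formulation is legitimate despite the Morrey-class potential $V$. A standard mollification argument coupled with the Morrey--Adams theorem (controlling $\int |V||\phi|^p \dx$ by $\|\nabla \phi\|^p_{L^p}$ plus $\|\phi\|^p_{L^p}$) handles this; the $C^{1,\alpha}_{\loc}$-regularity of $v$ ensures the pointwise computations involving $\nabla v$ are genuine a.e.\ identities.
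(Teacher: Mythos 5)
Your proof is correct and follows essentially the same route as the one the paper cites, namely the argument in \cite[Lemma 3.4]{PR}: the ground-state substitution $u=wv$, Picone's identity recasting $|\nabla u|_A^p - |\nabla v|_A^{p-2}\langle A\nabla v,\nabla(w^p v)\rangle$ as the remainder $R(w,v)$, the two-sided Lindqvist estimate to identify $R(w,v)$ pointwise with the simplified integrand, and testing the weak equation for $v$ against $w^p v$ (which, as you correctly flag, requires a density argument using Morrey--Adams to pass from $C_0^\infty$ to compactly supported $W^{1,p}_0$ test functions). The paper itself does not reprove the lemma, noting only that the argument is purely algebraic and so extends from $V\in L^\infty_{\loc}$ to $V\in M^q_{\loc}(p;\Gw)$; your write-up supplies precisely that algebra.
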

\begin{remark}
	Lemma \ref{lem:simp_equiv} is proved  in \cite{PR} for  the case $V\in L^{\infty}_{\loc}(\Gw)$. However, the proof is purely algebraic and therefore, holds for $V\in M^{q}_{\loc}(p;\Gw)$ as well.
\end{remark}
As a corollary of \eqref{simp_energy} and H\"older's inequality we obtain the following.
\begin{corollary}\label{cor:simp_non}
	Let $v\in \mathcal{C}^{Q_{p,A,V}}(\Gw)\cap C^{1,\alpha}_{\loc}(\Gw)$ and let $X(w):=\int_{\Gw}v^p|\nabla w|_A^p \dx$ and 
	$Y(w):=\int_{\Gw}|w|^p|\nabla v|_A^p \dx$.
	Then, for any continuous function $w\in W^{1,p}(\Gw)$ having compact support in $\Gw$, the following assertions hold true.
	
	1.  $\qquad \mathcal{Q}^{\Gw}_{p,A,V}(vw)\asymp 
	\mathcal{Q}^{\Gw}_{\mathrm{sim},p,A,V}(w)$.  
	$$\!\!\!\!\!\!\!\!\!\!\!\! \!\!\!\!\!\!\!\!
	2. \qquad \mathcal{Q}^{\Gw}_{\mathrm{sim},p,A,V}(w)  \leq
	\begin{cases}
	CX(w)&1<p\leq 2, \\
	C \left[ X(w)+X(w)^{2/p}Y(w)^{\frac{p-2}{p}}\right] & p>2.
	\end{cases}
	$$
\end{corollary}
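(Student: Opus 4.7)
The proof reduces to Lemma~\ref{lem:simp_equiv} combined with elementary pointwise bounds and one application of H\"older's inequality. For assertion~(1), I would set $u:=v|w|$; since $w$ is continuous with compact support in $\Gw$ and $v\in C^{1,\alpha}_{\loc}(\Gw)$ is strictly positive, $u\in W^{1,p}_{\loc}(\Gw)$ has compact support and $u/v=|w|\in L^{\infty}_{\loc}(\Gw)$. Lemma~\ref{lem:simp_equiv} then yields $\mathcal{Q}^{\Gw}_{p,A,V}(v|w|)\asymp \mathcal{Q}^{\Gw}_{\mathrm{sim},p,A,V}(|w|)$. The identity $\nabla |w|=\mathrm{sign}(w)\,\nabla w$ a.e.\ gives $|\nabla(vw)|_A=|\nabla(v|w|)|_A$ a.e., hence $\mathcal{Q}^{\Gw}_{p,A,V}(vw)=\mathcal{Q}^{\Gw}_{p,A,V}(v|w|)$, while $\mathcal{Q}^{\Gw}_{\mathrm{sim},p,A,V}$ depends on $w$ only through $|w|$ and $|\nabla w|_A$, so assertion~(1) follows.

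For assertion~(2), I would estimate the factor $(|w||\nabla v|_A+v|\nabla w|_A)^{p-2}$ appearing in the integrand of $\mathcal{Q}^{\Gw}_{\mathrm{sim},p,A,V}(w)$ pointwise, according to the sign of $p-2$. When $1<p\leq 2$, the exponent is nonpositive and $t\mapsto t^{p-2}$ is decreasing on $(0,\infty)$, so on $\{|\nabla w|_A>0\}$ we have
\[
(|w||\nabla v|_A+v|\nabla w|_A)^{p-2}\leq (v|\nabla w|_A)^{p-2},
\]
while on $\{|\nabla w|_A=0\}$ the prefactor $v^2|\nabla w|_A^2$ vanishes. Substituting gives $\mathcal{Q}^{\Gw}_{\mathrm{sim},p,A,V}(w)\leq X(w)$.

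When $p>2$, the elementary inequality $(a+b)^{p-2}\leq C_p(a^{p-2}+b^{p-2})$ for $a,b\geq 0$ splits the integral as
\[
\mathcal{Q}^{\Gw}_{\mathrm{sim},p,A,V}(w)\leq C_p X(w)+C_p\int_{\Gw}v^2|\nabla w|_A^2\,|w|^{p-2}|\nabla v|_A^{p-2}\dx.
\]
Applying H\"older's inequality to the remaining integral with conjugate exponents $p/2$ and $p/(p-2)$ bounds it by $X(w)^{2/p}Y(w)^{(p-2)/p}$, yielding assertion~(2). I expect no substantive obstacle: the statement is essentially a bookkeeping corollary of Lemma~\ref{lem:simp_equiv}, the only mild subtlety being the case split at $p=2$ and the replacement of $w$ by $|w|$ needed to apply Lemma~\ref{lem:simp_equiv} to signed $w$.
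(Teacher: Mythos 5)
Your proof is correct and follows exactly the route the paper intends: the paper gives no explicit argument, merely remarking that the corollary follows ``from \eqref{simp_energy} and H\"older's inequality,'' and your proposal fills in precisely those steps (Lemma~\ref{lem:simp_equiv} applied to $u=v|w|$ for assertion~(1), the monotonicity of $t\mapsto t^{p-2}$ for $1<p\le 2$, and the elementary inequality $(a+b)^{p-2}\le C_p(a^{p-2}+b^{p-2})$ followed by H\"older with exponents $p/2,\ p/(p-2)$ for $p>2$). The only point worth flagging is the implicit convention that, for signed $w$, the integrand of $\mathcal{Q}^{\Gw}_{\mathrm{sim},p,A,V}(w)$ is understood with $|w|$ in the bracket $(|w|\,|\nabla v|_A+v|\nabla w|_A)^{p-2}$; you handle this correctly by replacing $w$ with $|w|$ and using $\nabla|w|=\mathrm{sign}(w)\nabla w$ a.e.\ (together with the fact that $\nabla w=0$ a.e.\ on $\{w=0\}$), which is exactly the right fix.
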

\subsection{Optimal Hardy-weights}
Let $\overline{\infty}$ denote the ideal point in the one-point compactification of $\Gw$. Let us define the notion of an optimal Hardy-weight  for the operator $Q_{p,A,V} $.
\begin{definition}[{\label{def:optimal}\cite{DP}}]
	Suppose that $Q_{p,A,V} $ is subcritical in $\Gw$. We say that $0\lneqq W$ is an {\em optimal Hardy-weight of $Q_{p,A,V}$ in $\Gw$} if the following two assertions are satisfied:
	\begin{enumerate}
		\item {\bf Criticality:} $Q_{p,A,V-W}$ is critical in $\Gw.$
		\item {\bf Null-criticality  with respect to $W$:} $\int_\Gw |\psi|^p W\dx=\infty$,  where $\psi$ is the (Agmon) ground state of $Q_{p,A,V-W}$ in $\Gw.$
	\end{enumerate}
\end{definition}
\begin{remark}
	Let us discuss Definition \ref{def:optimal}.
	Suppose that $Q_{p,A,V} $ is subcritical in a domain $\Gw$ containing $x_0$, and let $x_0\in K\Subset \Gw$. Then, for any 
	$0\lneqq W \in C_0^{\infty}(\Gw)$ there exists $\tau>0$ such that 
	$Q_{p,A,V-\tau W}$ is critical in $\Gw$ (see for example \cite[Proposition 4.4]{PT}  and \cite{PP}). On the other hand, the ground state of $Q_{p,A,V-\tau W}$, $\phi$, satisfies 
	$$
	\phi\asymp G^{\Gw}_{Q_{p,A,V}}(x,x_0) \qquad \mathrm{in}~
	\Gw \setminus K.
	$$
	Therefore, there are infinity many weight functions $0\lneqq W\in C_0^{\infty}(\Gw)$ such that 
	$Q_{p,A,V -W}$ is critical in $\Gw$, obviously, for such a weight $W$, the operator $Q_{p,A,V -W}$  is not  null-critical with respect to $W$.
\end{remark}
\begin{definition}\label{rem_1.3}
	We say that a Hardy-weight $W$ is {\em optimal at infinity in $\Gw$} if for any  $K\Subset \Gw$, we have 
	$$\sup\{\gl \in \R \mid Q_{p,A,V- \gl W}\geq 0 \mbox{ in } \Gw\setminus K\}=1.$$   	
\end{definition}
\begin{remark}\label{rem_opt_infty}
	The definition of an optimal Hardy-weight in \cite{DFP} includes the requirement that $W$ should be optimal at infinity. But, it is proved  in \cite{KP}  that if $Q - W$ is null-critical with respect to $W$ in $\Gw$, then $Q-W$ is  optimal at infinity. The same proof applies under the assumptions considered in the present paper, hence, in Definition~\ref{def:optimal} we avoid the requirement of optimality at infinity. 
\end{remark}	

The following coarea formula is a direct consequence of \cite[Proposition~3.1]{DP}.
\begin{lemma}{(Coarea formula)}\label{lem:coarea}
	Let $\Gw$ be  a domain in $\R^n$, $n \! \geq \!  2$,   and  $G \! \in \!  C^{1,\alpha}(\Omega)$ is a positive $(p,A)$-harmonic function in $\Gw^* \! := \! \Gw \! \setminus  \! \{0\}$. Assume that for any $0 \! < \! t_1< \! t_2 \!  < \! \infty$, the set
	$\mathcal{A} \! := \! \{x \! \in  \! \Gw^* \mid t_1<G(x)<t_2\}$ is bounded.
	Let $h\in{C^2(0,\infty)} $ be a positive function satisfying $h'(s)>0$ for all $s>0$, and denote $v:=h(G)$.

	Then there exists $C>0$, independent on $t$, such that for any locally bounded real measurable function $f$ such that $f(v)$ has a compact support in $\Gw^*$, we have
	\begin{equation}\label{eq:coarea}
	\int_{\Omega^*}f(v)|\nabla v|_A^p \mathrm{d}x=C\int_{h(\inf_{\Omega*} G)}^{h(\sup_{\Omega*} G)}
	\frac{f(\tau)}{((h^{-1})'(\tau))^{p-1}} \dtau.
	\end{equation}
\end{lemma}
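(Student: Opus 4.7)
The plan is to combine three ingredients: the chain rule for $v=h(G)$, the constancy of the ``$(p,A)$-flux'' through level sets of $G$ (coming from $(p,A)$-harmonicity), and the standard Federer coarea formula applied to $G$ itself. First, by the chain rule $|\nabla v|_A^p = (h'(G))^p|\nabla G|_A^p$, and writing $f(v)=f(h(G))$ and applying the usual coarea formula to the Lipschitz function $G$ on a bounded set containing $\mathrm{supp}(f(v))$, I obtain
\[
\int_{\Gw^*}f(v)|\nabla v|_A^p\dx
=\int_0^\infty f(h(s))(h'(s))^p\,\Phi(s)\ds,
\]
where $\Phi(s):=\int_{\{G=s\}}|\nabla G|_A^p/|\nabla G|\dH$.

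The crucial claim is that $\Phi(s)\equiv C$ for a.e.\ $s$ in the relevant range. To prove this I would set $F:=|\nabla G|_A^{p-2}A\nabla G$; since $G$ is $(p,A)$-harmonic in $\Gw^*$, we have $F\in\mathcal{DM}^{\infty}_{\mathrm{loc}}(\Gw^*)$ with $\diver F=0$. By hypothesis each slab $\{t_1<G<t_2\}$ is bounded, hence compactly contained in $\Gw^*$. Combining Proposition~\ref{prop:coarea_f_peri} with Sard's theorem applied to $G\in C^{1,\ga}$, for a.e.\ pair $t_1<t_2$ the slab has finite perimeter, its topological boundary is the union of the two level hypersurfaces $\{G=t_i\}$ (which are $C^{1,\ga}$ off the critical set $\{\nabla G=0\}$) and a set of zero $(n-1)$-dimensional Hausdorff measure, and the exterior unit normal along $\{G=t_i\}$ is $\mp\nabla G/|\nabla G|$. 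Lemma~\ref{lem:Gauss_Green_non} applied to $F$ on this slab then gives
\[
0=\int_{\{t_1<G<t_2\}}\diver F\dx=\Phi(t_1)-\Phi(t_2),
\]
so $\Phi$ is a.e.\ constant; call its common value $C$.

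Substituting $\Phi\equiv C$ into the first display and performing the change of variable $\tau=h(s)$, for which $(h'(s))^p\,\mathrm{d}s = ((h^{-1})'(\tau))^{1-p}\,\mathrm{d}\tau$ (since $h'(s)=1/(h^{-1})'(\tau)$), yields the claimed identity with the stated limits $h(\inf_{\Gw^*}G)$ and $h(\sup_{\Gw^*}G)$.

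The main obstacle is the flux-constancy step: one needs to verify enough regularity of the generic level set of $G$ (finite perimeter, $C^{1,\ga}$ off a critical set of zero $\mathcal{H}^{n-1}$-measure) so that the Gauss--Green formula of Lemma~\ref{lem:Gauss_Green_non} is legitimately applicable, and then to ensure that the exceptional set of ``bad'' heights has Lebesgue measure zero so the constancy transfers into the outer $s$-integral. Both are standard consequences of $G\in C^{1,\ga}$, Sard's theorem, and the boundedness-of-slabs hypothesis, but they are the only delicate point; everything else is algebra and a one-dimensional substitution.
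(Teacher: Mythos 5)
Your argument follows the same route as the paper's proof: chain rule plus Federer's coarea formula applied to $G$, constancy of the $(p,A)$-flux through level sets via the Gauss--Green theorem on generic slabs (using Sard's theorem for $C^{1,\alpha}$ maps and Proposition~\ref{prop:coarea_f_peri} to justify finite perimeter and a well-defined normal), and finally the substitution $\tau=h(s)$. The only difference is cosmetic: you name the flux $\Phi(s)$ and explicitly record $F=|\nabla G|_A^{p-2}A\nabla G\in\mathcal{DM}^{\infty}_{\loc}(\Gw^*)$, which the paper uses implicitly; everything else, including the change-of-variable identity $(h'(s))^p\ds=((h^{-1})'(\tau))^{1-p}\dtau$, matches.
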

\begin{proof}
	Since $G\in C^{1,\alpha}(\Gw^*)$ and $1<p$, then $\frac{|\nabla G|_A^p}{|\nabla G|}\in L^1_{\loc}(\Gw^*)$ and we may use the (classical) coarea formula (\cite[Theorem 2.32]{CTZ}) to obtain
	for $v=h(G)$
	\begin{align}\label{eq:co_BP}
	&
	\int_{\Gw^*}f(v)|\nabla v|_A^p \dx=
	\int_{\Gw^*}f(h(v))|h'(G)|^p\frac{|\nabla G|_A^p}{|\nabla G|}|\nabla G| \dx= \nonumber\\&
	\int_{\R_+}f(h(t))h'(t)^p\int_{\{G=t\}}\frac{|\nabla G|_A^p}{|\nabla G|} \dH.
	\end{align}
	By (a generalized) Sard's theorem for $C^{1,\alpha}$ functions \cite[Theorem 1.2]{BHS}, $$ 
	\mathcal{H}^{n-1} \left (\{ G=t  \}\cap \mathrm{Crit}(G) \right )=0.
	$$
	The fact that $G\in C^{1,\alpha}$ and Proposition \ref{prop:coarea_f_peri}  imply that (for a.e. $t_1<t_2$) the set 
	$\mathcal{A}:=\{t_1<G<t_2\}$ has a finite perimeter.
	In particular, $\nabla G\neq 0$ and  $\vec{n}$ is well defined on $\partial \mathcal{A}$, $\mathcal{H}^{n-1}$-a.e..
	Let $\partial_+=\{x\in \overline{\mathcal{A}}: G(x)=t_2\}$
	and  $\partial_-=\{x\in \overline{\mathcal{A}}: G(x)=t_1\}$.
	The Gauss-Green theorem (Lemma \ref{lem:Gauss_Green_non})  implies that
	\begin{align*}
	&
	0\!=\!-\!\int_{\mathcal{A}}\!\!\mathrm{div}(|\nabla G|_A^{p-2}A\nabla G) \!\dx\!=\!\!
	\int_{\partial_{+}}+
	\int_{\partial_{-}}\!|\nabla G|_A^{p-2}A\nabla G \!\cdot\! \vec{n} \dH\!= \\&
	\int_{\partial_{+}}|\nabla G|_A^{p-2}A\nabla G \cdot \frac{\nabla G}{|\nabla G|} \dH-
	\int_{\partial_{-}}|\nabla G|_A^{p-2}A\nabla G \cdot \frac{\nabla G}{|\nabla G|} \dH= \\&
	\int_{\{G=t_2\}}\frac{|\nabla G|_A^{p}}{|\nabla G|} \dH-
	\int_{\{G=t_1\}}\frac{|\nabla G|_A^{p}}{|\nabla G|} \dH.
	\end{align*}
	In particular, 
	for any $t>0$,
	$\int_{\{G=t_1\}}\frac{|\nabla G|_A^{p}}{|\nabla G|} \dH=C$. 
	By \eqref{eq:co_BP}, 
	$$
	\int_{\Gw^*}f(v)|\nabla v|_A^p \dx=C\int_{\R_+}f(h(t))h'(t)^p \dt.
	$$
	The change of the variable $h(t)=\tau$ then implies \eqref{eq:coarea}.	
\end{proof}
The following theorem is proved   in \cite{DP} for  the case $A=\mathbf{1}$. However,  it  can be easily checked that  the validity of Lemma \ref{lem:coarea} for a general matrix $A$ satisfying Assumptions \ref{assump2_non_linear}, gives rise to the following theorem. 
\begin{theorem}[{\cite[Theorem~1.5]{DP}}]\label{DPThe}
	Let $\overline{\infty}$ denote the ideal point in the one point compactification of  $\Gw$. Suppose that $-\Delta_{p,A}$ is subcritical in $\Gw$, and admits a positive $(p,A)$-harmonic function $G(x)$ in $\Gw^*:=\Gw\setminus \{0\}$ satisfying one of the following conditions \eqref{eq:a1},\eqref{eq:a2}:
	\begin{equation}\label{eq:a1}
	1<p\leq n, \qquad \lim\limits_{x\to 0}G(x)=\infty, \qquad \mbox{and} \qquad \lim\limits_{x\to \overline{\infty}}G(x)=0,
	\end{equation} 
	\begin{equation}\label{eq:a2}
	p>n, \qquad \lim\limits_{x\to 0}G(x)=\gamma\geq 0, \qquad \mbox{and} \qquad \lim\limits_{x\to \overline{\infty}}G(x)=
	\begin{cases}
	\infty & \mbox{if~} \gamma=0,\\
	0& \mbox{if~} \gamma>0.
	\end{cases}	
	\end{equation}
	Define a positive function $v$ and a nonnegative weight $W$ on $\Gw^*$ as follows:
	\begin{enumerate}
		\item If either \eqref{eq:a1} is satisfied, or \eqref{eq:a2} is satisfied with $\gamma=0$, then 
		$$v:=G^{(p-1)/p},  \; \mbox{ and } \;W:=\left( \frac{p-1}{p}\right)^p\left | \frac{\nabla G}{G}\right |_A^p.$$
		
		\item If \eqref{eq:a2} is satisfied with $\gamma>0$, then 
		$v \! :=  \! [G(\gamma-G)]^{(p-1)/p}$,   and
		$$
		W\! :=\! \left( \!\frac{p-1}{p}\!\right)^p\! \left|\frac{\nabla G}{G(\gamma-G)} \right |_A^p
		\! |\gamma-2G|^{p-2}[2(p-2)G(\gamma-G)+\gamma^2].
		$$
	\end{enumerate}
	Then the following Hardy-type inequality holds in $\Gw^*$:
	\begin{equation}
	\int_{\Gw^*}|\nabla \phi|_A^p \dx\geq\int_{\Gw^*}W|\phi|^p \dx, \qquad \forall\,\phi\in C_0^{\infty}(\Gw^*),
	\end{equation}
	and $W$ is an optimal Hardy-weight of $-\Delta_{p,A}$ in $\Gw^*$.
	Moreover, up to a multiplicative constant, $v$ is the ground state of $-\Delta_{p,A}-W\mathcal{I}_p$ in $\Gw^*$.
\end{theorem}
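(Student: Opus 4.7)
The plan is to transport the proof of \cite[Theorem~1.5]{DP}, which was carried out for $A=\mathbf{1}$, to the present $(p,A)$-setting by replacing every use of the classical coarea identities by the matrix-weighted coarea formula of Lemma~\ref{lem:coarea}. The argument splits naturally into three parts: verifying that $v$ solves $(-\Delta_{p,A}-W\mathcal{I}_p)v=0$ in $\Gw^*$; proving criticality of this operator with $v$ as the ground state; and establishing null-criticality with respect to $W$.

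The first part is local and purely algebraic. Writing $v=h(G)$ with $\nabla v=h'(G)\nabla G$, and invoking $\diver(|\nabla G|_A^{p-2}A\nabla G)=0$, a chain-rule computation gives
\[
-\Delta_{p,A}v=-(p-1)(h'(G))^{p-2}h''(G)\,|\nabla G|_A^{p}\qquad\text{wherever } h'(G)\neq 0.
\]
Substituting $h(s)=s^{(p-1)/p}$ in case~(1), or $h(s)=[s(\gamma-s)]^{(p-1)/p}$ in case~(2) on each of the two monotone branches $\{G<\gamma/2\}$ and $\{G>\gamma/2\}$, a direct exponent computation yields $-\Delta_{p,A}v=Wv^{p-1}$; the factor $|\gamma-2G|^{p-2}[2(p-2)G(\gamma-G)+\gamma^2]$ appearing in $W$ in case~(2) is precisely what this computation produces. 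The Hardy-type inequality on $C_0^{\infty}(\Gw^*)$ then follows from the AP theorem.

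For criticality, I would construct a null-sequence (cf.\ Lemma~\ref{lem:null_seq_non}) of the form $\phi_k:=v\,\eta_k(G)$, with $\eta_k$ a compactly supported logarithmic cutoff on the range of $G$ in case~(1), or a two-sided logarithmic cutoff on $(0,\gamma)$ adapted to the two monotone branches of $h$ in case~(2). By Corollary~\ref{cor:simp_non}, the energy $\mathcal{Q}^{\Gw^*}_{p,A,-W}(\phi_k)$ is controlled by $X(\eta_k)=\int_{\Gw^*}v^p|\nabla \eta_k(G)|_A^p\dx$, plus a lower-order $X^{2/p}Y^{(p-2)/p}$ correction when $p>2$; applying Lemma~\ref{lem:coarea} on each monotone piece of $h$ collapses $X(\eta_k)$ to a one-dimensional integral of the form $C\int h(s)^p(\eta_k'(s))^p\ds$, a standard logarithmic integral that tends to zero as $k\to\infty$, while $\|\phi_k\|_{L^p(B)}\asymp 1$ on some fixed $B\Subset\Gw^*$. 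The same device handles null-criticality: Lemma~\ref{lem:coarea} converts $\int_{\Gw^*}v^p W\dx$ into an explicit $1$-D integral of the form $\int\tau^{-1}\dtau$ across the range of $v$, which diverges at the endpoints.

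The main obstacle, in my view, lies in the case-(2) bookkeeping. One must split $\Gw^*$ along the level set $\{G=\gamma/2\}$, apply Lemma~\ref{lem:coarea} separately on each side, and verify that the boundary terms match across this level set and at the two ends $G\to 0$ and $G\to \gamma$. The presence of $|\gamma-2G|^{p-2}$ in the definition of $W$ -- which absorbs the degeneracy of $h'$ at $G=\gamma/2$ -- is precisely what makes the energy and weighted-mass balances consistent after this surgery, and it is where the somewhat intimidating explicit form of $W$ in case~(2) is dictated.
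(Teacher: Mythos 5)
Your proposal is correct and follows essentially the same route the paper takes: the paper does not reprove Theorem~\ref{DPThe} but simply cites \cite[Theorem~1.5]{DP} and observes that Lemma~\ref{lem:coarea} is the only $A$-dependent ingredient needed to carry the $A=\mathbf{1}$ argument over, which is precisely your plan. Your three-part expansion (chain-rule/exponent identity for $v=h(G)$, logarithmic null-sequences controlled via Corollary~\ref{cor:simp_non} and Lemma~\ref{lem:coarea}, and the divergent one-dimensional integral for null-criticality, with the branch-splitting at $\{G=\gamma/2\}$ in case~(2)) accurately fills in what the paper leaves as ``easily checked.''
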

The following simple observation concerns the existence of optimal Hardy-weights for a `small perturbation' of an operator with  a given optimal Hardy-weight.
\begin{lemma}\label{Vgeq0}
	Assume that   $Q_{p,A,V} $ is subcritical in $\Gw$  and admits an optimal Hardy-weight $W$ in $\Gw^* \! := \! \Gw \! \setminus \!  \{0\}$.
	Let  $V_1\in M^q_\loc(p;\Gw)$ satisfy  $V_1\geq -\vge W$ for some $0\leq \vge <1$ and $q>n/p$.
	Then $W+V_1$ is an optimal Hardy-weight for $Q_{p,A,V+V_1}$ in $\Gw^*$.
\end{lemma}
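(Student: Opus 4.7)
The proof is essentially an algebraic identity together with a pointwise comparison. My plan is to verify the three things required by Definition \ref{def:optimal}: that $Q_{p,A,V+V_1}$ is subcritical in $\Gw^*$, that $Q_{p,A,V+V_1-(W+V_1)}$ is critical in $\Gw^*$, and that the null-criticality integral diverges.

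The key observation is the pointwise bound
\[
W+V_1 \;\geq\; W-\vge W \;=\; (1-\vge)W \;\gneqq\; 0,
\]
which uses the hypothesis $V_1\geq -\vge W$ and $0\leq \vge<1$. In particular $W+V_1\gneqq 0$ and $W+V_1\in M^{q}_{\loc}(p;\Gw)$ (since both $W$ and $V_1$ lie in that class). The crucial identity is
\[
Q_{p,A,(V+V_1)-(W+V_1)} \;=\; Q_{p,A,V-W},
\]
so the perturbed operator is literally the same operator that was assumed critical in $\Gw^*$ by the optimality of $W$. This immediately gives the criticality condition. It also certifies subcriticality of $Q_{p,A,V+V_1}$: the weight $W+V_1$ is a nonnegative, nonzero element of $M^{q}_{\loc}(p;\Gw)$ satisfying $Q_{p,A,(V+V_1)-(W+V_1)}\geq 0$, which is exactly the definition of subcriticality.

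It remains to establish null-criticality with respect to $W+V_1$. Let $\gy$ be the Agmon ground state of $Q_{p,A,V-W}$ in $\Gw^*$, which by the previous identity is also the ground state of $Q_{p,A,(V+V_1)-(W+V_1)}$ (unique up to a multiplicative constant). The pointwise bound $W+V_1\geq (1-\vge)W\geq 0$ makes the integrand $|\gy|^p(W+V_1)$ nonnegative, so
\[
\int_{\Gw^*}|\gy|^p(W+V_1)\dx \;\geq\; (1-\vge)\int_{\Gw^*}|\gy|^p W\dx \;=\; \infty,
\]
where the divergence of the last integral is the null-criticality of $W$ as an optimal Hardy-weight for $Q_{p,A,V}$ in $\Gw^*$.

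There is no real obstacle here; the only point requiring mild care is to confirm that the null-criticality integral in $W+V_1$ is genuinely well-defined (rather than an $\infty-\infty$ expression), which is why one must first record the nonnegativity $W+V_1\geq (1-\vge)W$ and then apply monotonicity of the integral. By Remark \ref{rem_opt_infty} optimality at infinity is automatic and need not be checked separately.
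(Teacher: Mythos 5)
Your proof is correct and follows essentially the same route as the paper: you exploit the algebraic identity $Q_{p,A,(V+V_1)-(W+V_1)}=Q_{p,A,V-W}$ to transfer criticality and the ground state unchanged, and then use $W+V_1\geq(1-\vge)W$ to deduce divergence of the null-criticality integral. Your additional remarks (explicitly checking subcriticality of $Q_{p,A,V+V_1}$ and noting the integral is well-defined because the integrand is nonnegative) are minor elaborations of steps the paper leaves implicit.
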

\begin{proof}
	Consider the function $W \! + \! V_1$.
	Then, $Q_{p,A,V \! + \! V_1} \!  - \! (W \! + \! V_1)\mathcal{I}_p \! = \! Q_{p,A,V} \! - \! W\mathcal{I}_p$  is a critical operator in $\Gw^*$. 
	
	Obviously, $W+V_1\gneqq 0$, and  the ground state $\psi$ of $Q_{p,A,V}-W\mathcal{I}_p$ in $\Gw^*$ is the ground state of $Q_{p,A,V+V_1}-(W+V_1)\mathcal{I}_p$ in $\Gw^*$. Moreover,
	$$
	\int_{\Gw^*}(W+V_1)|\psi|^p \dx \geq (1-\vge)      \int_{\Gw^*}W|\psi|^p \dx= \infty,
	$$ 
	implying that $Q_{p,A,V+V_1-(W+V_1)}$ is null-critical in $\Gw^*$ with respect to $W+V_1$. In particular, $W+V_1$ is an optimal Hardy-weight of $Q_{p,A,V+V_1}$ in $\Gw^*$.
\end{proof}

\section{Optimal Hardy-weights for nonpositive  potentials}\label{sec3} 
Lemma~\ref{Vgeq0} obviously applies when $V_1\geq 0$. The main goal in the current section is to obtain optimal Hardy-weights for a general subcritical operator $Q_{p,A,V}$ in a domain $\Gw$, without assuming $V= 0$ in $\Gw$. In particular,  we prove Theorem \ref{thm:main_non_2}.

First, we recall  the following weak comparison principle \cite[Theorem 5.3]{PP}.
\begin{lemma}[Weak comparison principle]\label{lem:WCP_non}
	Let $\Gw\subset \R^n$ be a bounded Lipschitz domain.
	Assume that $A$ is a uniformly elliptic and bounded matrix in $\Gw$, $V\in M^{q}(p;\Gw)$ and $0\leq g\in L^{\infty}(\Gw)$.
	Assume further that $\lambda_1(\Gw)>0$, where $\lambda_1(\Gw)$ is the principal eigenvalue of the operator $Q_{p,A,V}$. Let $u_2\in W^{1,p}(\Gw)\cap C(\overline{\Gw})$ be a (weak) solution of 
	$$
	\begin{cases}
	Q_{p,A,V}(u_2)= g & \mbox{in~} \Gw, \\
	u_2>0 & \mbox{on ~} \partial \Gw. 
	\end{cases}
	$$
	If $u_1\in W^{1,p}(\Gw)\cap C(\overline{\Gw})$ satisfies 
	$$
	\begin{cases}
	Q_{p,A,V}(u_1)\leq Q_{p,A,V}(u_2) & \mbox{in~} \Gw, \\
	u_1\leq u_2 & \mbox{on ~} \partial \Gw,
	\end{cases}
	$$
	then $u_1\leq u_2$ in $\Gw$.
\end{lemma}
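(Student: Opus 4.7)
The plan is to show that the open set $K := \{x\in \Gw : u_1(x) > u_2(x)\}$ is empty by testing the two weak (in)equalities against suitably normalized Picone-type functions and using the Picone inequality for $-\Delta_{p,A}$ together with the boundary data.

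First I would upgrade $u_2$ to a function that is strictly positive throughout $\overline{\Gw}$. Since $\lambda_1(\Gw)>0$, Lemma~\ref{lem:see_PP} guarantees that the strong maximum principle holds for $Q_{p,A,V}$ in $\Gw$. Combined with $Q_{p,A,V}(u_2)=g\geq 0$ and $u_2>0$ on $\partial \Gw$, this yields $u_2>0$ on $\overline{\Gw}$. Since $u_1,u_2\in C^{1,\alpha}_\loc(\Gw)\cap C(\overline{\Gw})$ under Assumptions~\ref{assump2_non_linear}, the set $K$ is open, and $u_1>u_2>0$ on $K$.

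For the main computation I would employ the Picone-type test functions
\[
\phi_i := (u_1^p - u_2^p)_+ / u_i^{p-1}, \qquad i=1,2,
\]
each of which is nonnegative, supported in $K$, vanishes on $\partial\Gw$ (because $u_1\leq u_2$ there), and is well-defined since $u_i>0$ on $\overline{K}$; thus $\phi_i\in W^{1,p}_0(\Gw)$. Testing $Q_{p,A,V}(u_1)\leq g$ against $\phi_1$ and $Q_{p,A,V}(u_2)\geq g$ against $\phi_2$, expanding $\nabla\phi_i$, and invoking the pointwise Picone inequality
\[
|\nabla u_j|_A^p \;\geq\; |\nabla u_i|_A^{p-2}\,A\nabla u_i \cdot \nabla\!\bigl(u_j^p/u_i^{p-1}\bigr), \qquad u_i,u_j>0,
\]
to control the gradient cross-terms, I expect to obtain the chain
\[
\int_K g\,\frac{u_1^p - u_2^p}{u_2^{p-1}}\dx \;\leq\; \int_K \bigl(|\nabla u_1|_A^p - |\nabla u_2|_A^p + V(u_1^p - u_2^p)\bigr)\dx \;\leq\; \int_K g\,\frac{u_1^p - u_2^p}{u_1^{p-1}}\dx.
\]
Since $g\geq 0$ and $u_1>u_2>0$ on $K$, the leftmost integrand dominates the rightmost pointwise, forcing equality throughout the chain.

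The main obstacle is extracting a contradiction when $g$ does not strictly dominate $0$ on all of $K$, since then the outer equality only gives $g(u_1^p-u_2^p)(u_2^{1-p}-u_1^{1-p})\equiv 0$, which does not by itself force $u_1=u_2$. Here I invoke the equality case of Picone's inequality: combined with equality in the chain, it forces $u_1/u_2$ to be constant on every connected component $K'$ of $K$, so $u_1 \equiv c\,u_2$ on $K'$ for some $c>1$. But $u_1=u_2$ on $\partial K'\cap \Gw$ by continuity, and $u_1\leq u_2$ on $\partial K'\cap \partial\Gw$ by hypothesis; extending $u_1=cu_2$ to $\overline{K'}$ and evaluating at $\partial K'$ forces $c\leq 1$, contradicting $c>1$. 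Hence $K=\emptyset$ and $u_1\leq u_2$ in $\Gw$.
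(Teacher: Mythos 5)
The paper does not prove this lemma; it simply cites it as \cite[Theorem~5.3]{PP}, so there is no in-text proof to compare against. Your argument is a self-contained proof via the Picone/D\'iaz--Sa\'a method, which is the standard route to weak comparison principles for $(p,A)$-Laplacian operators with potential, and I believe it is essentially correct. The role of $\lambda_1(\Gw)>0$ enters exactly where you place it: Lemma~\ref{lem:see_PP} gives the strong maximum principle, which (together with $g\geq 0$ and $u_2>0$ on $\partial\Gw$) gives $u_2>0$ on $\overline{\Gw}$, and hence $u_1>u_2>0$ on $\overline{K}$, making the Picone test functions admissible. The chain of inequalities is correctly assembled: testing the supersolution equation for $u_2$ against $\phi_2$ and using Picone yields the left inequality; testing the subsolution inequality for $u_1$ against $\phi_1$ and using Picone yields the right one; and $g\geq 0$, $u_1>u_2$ force $g\phi_1 \leq g\phi_2$ pointwise on $K$, closing the loop and forcing the Picone remainder $R(u_1,u_2) := |\nabla u_1|_A^p - |\nabla u_2|_A^{p-2}A\nabla u_2\cdot\nabla(u_1^p/u_2^{p-1})\geq 0$ to vanish a.e.\ on $K$. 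The equality case of Picone (which holds verbatim with the $A$-inner product in place of the Euclidean one) then gives $\nabla(u_1/u_2)=0$ a.e., hence $u_1=cu_2$ on each component $K'$ with $c>1$; since $u_1=u_2$ on $\partial K'$ (by continuity in $\Gw$ and by the boundary hypothesis on $\partial\Gw$) and $u_2>0$, you obtain $c=1$, a contradiction.

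Two small technical points you should make explicit. First, the weak (in)equalities are stated against $C_0^\infty(\Gw)$, and $\phi_i\in W^{1,p}_0(\Gw)\cap L^\infty$ is not compactly supported when $\overline{K}$ touches $\partial\Gw$; the extension to such test functions is a density argument, where the gradient term passes by H\"older and the $V$- and $g$-terms pass by dominated convergence (using $V\in L^1(\Gw)$ via Morrey--Adams on the bounded domain and the uniform $L^\infty$ bound on approximants). Second, to make $\phi_i$ well-defined on all of $\Gw$ rather than only on $\overline{K}$, replace $u_i$ in the denominator by, say, $\max(u_i,\tfrac12\min_{\overline{K}}u_i)$; this agrees with $u_i$ on $\overline{K}$ and keeps $\phi_i$ in $W^{1,p}(\Gw)$. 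Neither issue affects the substance of the argument.
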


In the following lemma we generalize the notion of Green potential for  $Q_{p,A,V}$.
\begin{lemma}\label{Gphi}
	Assume that $Q_{p,A,V}$ is  subcritical in $\Gw$, and
	let $0\lneqq \vgf\in \core$. Then there exists a   positive function $ G_{\varphi}\in W^{1,p}_{\loc}(\Gw)$,  such that $G_{\vgf}$ is a positive solution of minimal growth at infinity and  satisfies
	$Q_{p,A,V}(G_{\varphi}) =\vgf $ in $\Gw$. 
	
\end{lemma}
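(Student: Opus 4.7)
I will construct $G_{\vgf}$ as the monotone pointwise limit of the unique Dirichlet solutions $u_j$ of $Q_{p,A,V}(u)=\vgf$ in $\Gw_j$, $u|_{\partial\Gw_j}=0$, on an exhaustion $\{\Gw_j\}_{j\in\N}$ of $\Gw$, using subcriticality of $Q_{p,A,V}$ to build a $j$-independent global upper barrier $h$. For the barrier, choose $W_0 \in C_0^{\infty}(\Gw)$ with $W_0 > 0$ on $K := \supp \vgf$. Subcriticality of $Q_{p,A,V}$ combined with the perturbative stability of subcriticality \cite[Proposition 4.4]{PT} provides $\tau > 0$ with $Q_{p,A,V-\tau W_0} \geq 0$ in $\Gw$, and the {\bf AP} theorem then furnishes a positive $\psi \in C^{1,\alpha}_{\loc}(\Gw)$ satisfying $Q_{p,A,V}(\psi) = \tau W_0 \psi^{p-1}$ in $\Gw$. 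Since $\tau W_0 \psi^{p-1}$ is continuous and strictly positive on the compact set $K$, while $\vgf \in L^{\infty}$ is supported in $K$, there exists $\lambda > 0$ such that $h := \lambda \psi$ satisfies $Q_{p,A,V}(h) \geq \vgf$ pointwise in $\Gw$.

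For each $j$ with $K \Subset \Gw_j$, the corollary following Lemma~\ref{lem:see_PP} gives $\lambda_1(\Gw_j) > 0$ (using that $\psi$ is a positive supersolution of $Q_{p,A,V}(w)=0$ in $\Gw$), and Lemma~\ref{lem:see_PP}(6) then produces a unique nonnegative $u_j \in W^{1,p}_0(\Gw_j)$ solving $Q_{p,A,V}(u_j) = \vgf$; the strong maximum principle gives $u_j > 0$ in $\Gw_j$. Two applications of the weak comparison principle (Lemma~\ref{lem:WCP_non}) in $\Gw_j$---comparing $u_j$ with $u_{j+1}$ and with $h$, each strictly positive on $\partial\Gw_j$ where $u_j = 0$---yield $u_j \leq u_{j+1}$ and $u_j \leq h$ in $\Gw_j$. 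Hence $u_j \uparrow G_{\vgf}\leq h$ pointwise in $\Gw$, and quasilinear elliptic regularity together with the Harnack convergence principle upgrade the convergence to $W^{1,p}_{\loc}(\Gw) \cap C^{\beta}_{\loc}(\Gw)$, so $G_{\vgf}$ is a positive weak solution of $Q_{p,A,V}(G_{\vgf}) = \vgf$ in $\Gw$. For the minimal-growth property, take any smooth compact $K' \Subset \Gw$ with $K \subset \mathrm{int}(K')$ and any positive supersolution $v$ of $Q_{p,A,V}(w)=0$ in $\Gw \setminus K'$ satisfying $G_{\vgf} \leq v$ on $\partial K'$; then on $\Gw_j \setminus K'$ one has $u_j = 0 \leq v$ on $\partial \Gw_j$ and $u_j \leq G_{\vgf} \leq v$ on $\partial K'$, so Lemma~\ref{lem:WCP_non} gives $u_j \leq v$ in $\Gw_j \setminus K'$, and the limit yields $G_{\vgf} \leq v$ in $\Gw \setminus K'$.

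\textbf{Main obstacle.} The delicate point is the construction of $h$. Subcriticality only asserts existence of \emph{some} $W \gneqq 0$ with $Q_{p,A,V-W} \geq 0$, and such a $W$ may happen to vanish on $\supp\vgf$, in which case no scaling $\lambda\psi$ of the associated positive solution can dominate $\vgf$ pointwise. The way around is the perturbative robustness of subcriticality recorded in \cite[Proposition 4.4]{PT}, which allows one to replace $W$ by a prescribed $W_0 \in C_0^{\infty}$ positive on $K$, at the price of shrinking the parameter $\tau$.
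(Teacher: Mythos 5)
Your proof is correct, but it takes a genuinely different route from the paper's. The paper solves the Dirichlet problems for the $1/k$-perturbed operator $Q_{p,A,V+1/k}$ on $\Gw_k$ (the $+1/k$ guarantees $\lambda_1(\Gw_k)>0$ directly from $Q_{p,A,V}\geq 0$, without appealing to domain monotonicity of the principal eigenvalue), obtains monotonicity via the weak comparison principle, and then argues by dichotomy: if the sequence $\{G^k\}$ were not locally uniformly bounded, one normalizes at an interior point and, by the Harnack convergence principle, extracts a limit which is a positive solution of the homogeneous equation with minimal growth in a neighborhood of infinity---an Agmon ground state---contradicting subcriticality. You instead solve the unperturbed problems $Q_{p,A,V}(u_j)=\vgf$ on $\Gw_j$ (invoking the paper's corollary to Lemma~\ref{lem:see_PP} to ensure $\lambda_1(\Gw_j)>0$), and you dispose of the boundedness question by building an explicit $j$-independent upper barrier $h=\lambda\psi$ through the perturbative robustness of subcriticality \cite[Proposition~4.4]{PT} applied to a compactly supported $W_0>0$ on $\supp\vgf$. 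Each buys something: the paper's dichotomy is shorter and avoids constructing a barrier, while your barrier gives quantitative a priori control $u_j\leq h$ which makes the compactness argument more transparent. You also explicitly verify the minimal-growth property by comparison of $u_j$ against supersolutions in $\Gw_j\setminus K'$, a step that the paper's written proof leaves implicit. One small caveat: the weak comparison principle (Lemma~\ref{lem:WCP_non}) is stated for $u_2$ a solution of $Q_{p,A,V}(u_2)=g$ with $0\le g\in L^\infty$; when comparing $u_j$ to $h$ this is satisfied since $Q_{p,A,V}(h)=\lambda^{p-1}\tau W_0\psi^{p-1}\in L^\infty(\Gw_j)$, and when comparing $u_j$ with a general positive supersolution $v$ in the minimal-growth step you are applying the comparison principle in the sub/supersolution form rather than exactly as stated; this is standard but worth flagging.
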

\begin{proof}
	Fix $0\lneqq \varphi \in C_0^{\infty}(\Gw)$, and let $\{ \Gw_k\}_{k\in \N}$ be a smooth exhaustion of $\Gw$ with 
	$\mathrm{supp}(\vgf)\Subset \Gw_1$.
	Lemma \ref{lem:see_PP} implies that there exists  a unique positive solution  $G^k\in W^{1,p}(\Gw_k)$ to the problem

	$$
	\begin{cases}
	-\Delta_{p,A}(w)+(V+\frac{1}{k})|w|^{p-2}w=\varphi & \text{~in~} \Gw_k, \\
	w=0& \text{~on~} \partial \Gw_k.
	\end{cases}
	$$
	By the weak comparison principle (Lemma \ref{lem:WCP_non}), $\{ G^k\}_{k\in \N}$ is a monotone increasing sequence of functions. 
	Assume first that  the sequence $\{ G^k\}_{k\in \N}$ is not locally uniformly bounded in $\Gw$, and let $x_1\in \Gw_{2} \setminus \Gw_{1}$.
	By Harnack's convergence principle there exists a subsequence of $\{z_k(x):={G^k(x)}/{G^k(x_1)} \}_{k\in \N}$ which converges locally uniformly to a positive solution $G$, of the equation $Q_{p,A,V}(u)=0$ in $\Gw$.
	Therefore, $G$ is a positive solution of  the equation $Q_{p,A,V}(u)=0$ in $\Gw$ which clearly has minimal growth in a neighborhood of infinity in $\Gw$, i.e., $G$ is a ground state.  This is a contradiction to the subcriticality of the operator $Q_{p,A,V}$ in $\Gw$. \\ 
	Consequently, Harnack inequality (\cite[Theorem 2.7]{PP}) implies that the sequence $\{ G^k\}_{k\in \N}$ is locally uniformly bounded in $\Gw$. By Harnack convergence principle and the strong maximum principle, it converges locally uniformly (up to a subsequence) to a positive solution, $G_{\vgf}$, of the equation  $Q_{p,A,V}(u)=\varphi$ in $\Gw$.
	In fact, \cite[Theorem 5.3]{Lregularity} implies that there exists
	$0<\alpha<1$ such that    $G_{\vgf} \in C^{1,\alpha}_{\loc}(\Gw)$.
\end{proof}  
\begin{definition}\label{def_green}
	Let $0  \! \lneqq   \! \vgf  \! \in   \! \core$. A positive solution 
	$u  \! \in  \!  G_{\vgf}\in \mathcal{MG}_{A,V,\Gw,\supp(\vgf)}$ that satisfies   $Q_{p,A,V}(u)=\vgf$ in $\Gw$,  is called a {\em Green potential}  of $Q_{p,A,V}$  in $\Gw$ with a density $\vgf$.
\end{definition}
We proceed with the following technical proposition (cf. \cite[Lemma 2.10]{DP}).
\begin{proposition}\label{firstprop}
	Let  $f(t)\in C^2(\R_+)$ satisfying  $f,f',-f''>0$. Then, for all $0\leq u\in C^1(\Gw)$
	\begin{equation*} \label{0.5}
	Q_{p,A,V}(f(u))\!=\! 
	-\Delta_p^{1D}(f)(u)|\nabla u|^p_A+(f'(u))^{p-1} \!  \! \left(\!\! -\Delta_{p,A}(u)  \! +  \!  V\left(\frac{f(u)}{f'(u) u} \!\right)^{  \! p-1}  \! |u|^{p-1}  \! \right) 
	\end{equation*}
	in the the weak sense.
	Here $-\Delta_p^{1D} f(t):=-(|f'(t)|^{p-2}f'(t))'$  is the one-dimensional $p$-Laplacian.
\end{proposition}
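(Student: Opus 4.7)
The plan is to verify the identity by a direct pointwise chain-rule computation, and then interpret the result in the weak sense by a careful choice of test function. Since $f\in C^2(\R_+)$ with $f'>0$, the chain rule gives, at every point where $u>0$,
\[
\nabla f(u)=f'(u)\nabla u,\qquad |\nabla f(u)|_A^{p-2}A\nabla f(u)=\bigl(f'(u)\bigr)^{p-1}|\nabla u|_A^{p-2}A\nabla u,
\]
where we used $f'>0$ to discard the absolute value. In particular the vector field $|\nabla f(u)|_A^{p-2}A\nabla f(u)$ is continuous, so its divergence is well-defined as a distribution.

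Next I test against an arbitrary $\phi\in \core$. By the above identity,
\[
\int_{\Gw}\bigl\langle|\nabla f(u)|_A^{p-2}A\nabla f(u),\nabla\phi\bigr\rangle\dx=\int_{\Gw}\bigl(f'(u)\bigr)^{p-1}\bigl\langle |\nabla u|_A^{p-2}A\nabla u,\nabla\phi\bigr\rangle\dx.
\]
The key step is to move the scalar factor $\bigl(f'(u)\bigr)^{p-1}$ into the test function via the Leibniz rule
\[
\bigl(f'(u)\bigr)^{p-1}\nabla\phi=\nabla\!\bigl[\bigl(f'(u)\bigr)^{p-1}\phi\bigr]-(p-1)\bigl(f'(u)\bigr)^{p-2}f''(u)\phi\,\nabla u.
\]
The function $\psi:=\bigl(f'(u)\bigr)^{p-1}\phi$ lies in $W^{1,p}_{\mathrm c}(\Gw)$, so it is a legitimate test function (by the usual density of $\core$ in $W^{1,p}_0$ of a bounded set). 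Substituting, the first summand encodes $\bigl(f'(u)\bigr)^{p-1}(-\Delta_{p,A}u)$ tested against $\phi$, while the second yields
\[
-(p-1)\bigl(f'(u)\bigr)^{p-2}f''(u)|\nabla u|_A^p\,\phi= -\Delta_p^{1\mathrm D}(f)(u)\,|\nabla u|_A^p\,\phi,
\]
since $f'>0$ implies $-\Delta_p^{1\mathrm D}(f)(t)=-(p-1)(f'(t))^{p-2}f''(t)$.

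Finally, for the potential contribution I simply rewrite
\[
V\bigl(f(u)\bigr)^{p-1}=\bigl(f'(u)\bigr)^{p-1}\,V\!\left(\frac{f(u)}{f'(u)\,u}\right)^{\!p-1}\!u^{p-1},
\]
valid wherever $u>0$, and add it to both sides. Collecting terms gives the claimed identity in the weak sense. The main (and essentially only) obstacle is regularity: because $u$ is merely $C^1$, the quantity $-\Delta_{p,A}u$ makes sense only as a distribution, so the product rule must be carried out under the integral sign with $\psi=\bigl(f'(u)\bigr)^{p-1}\phi$ playing the role of a new test function; the computation itself is algebraic and mirrors the classical scalar case, as in \cite[Lemma 2.10]{DP}.
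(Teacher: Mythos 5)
Your proof is correct. The paper simply cites \cite[Lemma 2.10]{DP} as a black box for the weak chain-rule identity
\[
-\Delta_{p,A}(f(u))=-|f'(u)|^{p-2}\bigl[(p-1)f''(u)|\nabla u|_A^p+f'(u)\Delta_{p,A}(u)\bigr],
\]
then rewrites the $f''$-term as $-\Delta_p^{1D}(f)(u)$ using $f'>0$, and treats the potential term by the same trivial algebraic rewriting you use. What you do differently is to reprove the cited chain-rule identity from scratch: you compute $|\nabla f(u)|_A^{p-2}A\nabla f(u)=(f'(u))^{p-1}|\nabla u|_A^{p-2}A\nabla u$ pointwise, then pair against $\phi\in C_0^\infty(\Gw)$ and use the Leibniz substitution $\psi=(f'(u))^{p-1}\phi$ to split off the $(p-1)(f'(u))^{p-2}f''(u)|\nabla u|_A^p$ piece. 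This is precisely the mechanism underlying \cite[Lemma 2.10]{DP}, so the net effect is a more self-contained (and slightly more instructive) version of the paper's argument. The two routes agree on the regularity bookkeeping as well: $f'(u)>0$ and $f\in C^2$ make $(f'(u))^{p-1}\in C^1$, so $\psi$ is an admissible test function in $W^{1,p}$ with compact support, and the continuous field $|\nabla u|_A^{p-2}A\nabla u$ pairs with it without difficulty. The only caveat worth noting (in both your version and the paper's) is that the identity, as written for $0\le u\in C^1(\Gw)$ with $f\in C^2(\R_+)$, is really meaningful on $\{u>0\}$, since $f$ and $f'$ need not extend to $t=0$; this is harmless in the intended application, where $u=G_\vgf>0$ by the strong maximum principle.
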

\begin{proof}
	\
	By \cite[Lemma 2.10]{DP} (which clearly holds for the $(p,A)$-Laplacian), we have:
	\begin{equation}\label{eq_lap_fu} 
	-\Delta_{p,A}(f(u))=-|f'(u)|^{p-2}\left[(p-1)f''(u)|\nabla u|_A^p+f'(u)\Delta_{p,A}(u)\right]
	\end{equation}
	in the weak sense.
	Since $f\in C^2,f,f',-f''>0$ we have
	$$
	-|f'(u)|^{p-2}  (p-1)f''(u)|\nabla u|_A^p  =-\frac{d}{dt}[|f'(t)|^{p-1}](u)|\nabla u|_{A}^p=-\Delta_p^{1D}(f)(u)|\nabla u|_A^p \,,
	$$
	and together with  \eqref{eq_lap_fu} the proposition is proved.
\end{proof}
\begin{remark}
	We remark that if $f(t)=t^{\frac{p-1}{p}}$,  then 
	$$
	-\Delta_p^{1D}(f(t))- \! \!\left(\frac{p-1}{p}\right)^p \! \frac{f(t)^{p-1}}{t^{p}} \!=\!0, \;\; \mbox{and }  c_p:= \! \left(\frac{f(u)}{f'(u) u}\right)^{p-1}\!\!=\!\left(\frac{p}{p-1}\right)^{p-1}\!\!>1.
	$$ 	
\end{remark}
Lemma \ref{Gphi} and Proposition \ref{firstprop} imply:
\begin{corollary}
	Assume that $Q_{p,A,c_p V}$ is subcritical in $\Gw$. For $0\lneq\vgf\in \core$, let $G_{\vgf}$  be a Green potential satisfying $Q_{p,A,c_p V}(G_{\vgf})=\vgf$ in $\Gw$, and let $f(t)=t^{\frac{p-1}{p}}$. 
	Then,
	\begin{equation}
	Q_{p,A,V}(f(G_{\vgf}))= 
	-\Delta_p^{1D}(f)(G_{\varphi})|\nabla G_{\varphi}|_A^p+(f'(G_{\varphi}))^{p-1}\varphi \gneqq 0.
	\end{equation}
	In particular, $f(G_{\varphi})$ is a positive solution of the equation
	$Q_{p,A,V-W}(v) =0$, where 
	$$
	W=\frac{Q_{p,A,V}(f(G_{\varphi}))}{f(G_{\varphi})^{p-1}}, \quad \mbox{and } \quad 
	W=\left( \frac{p-1}{p}\right )^p\left|\frac{\nabla G_{\varphi}}{G_{\varphi}}\right|_A^p \qquad \text{in}~ \Gw \setminus \mathrm{supp}(\varphi).
	$$ 
\end{corollary}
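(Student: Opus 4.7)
The plan is to substitute the Green potential $G_{\vgf}$ into the identity provided by Proposition~\ref{firstprop} with the specific choice $f(t) = t^{(p-1)/p}$, and then use the defining equation $Q_{p,A,c_pV}(G_{\vgf})=\vgf$ to simplify the potential term so that the constant $c_p$ is absorbed correctly.

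First I would apply Proposition~\ref{firstprop} with $u=G_{\vgf}$ and $f(t)=t^{(p-1)/p}$, which gives
\begin{equation*}
Q_{p,A,V}(f(G_{\vgf})) = -\Delta_p^{1D}(f)(G_{\vgf})|\nabla G_{\vgf}|_A^p + (f'(G_{\vgf}))^{p-1}\left(-\Delta_{p,A}(G_{\vgf}) + V\left(\tfrac{f(G_{\vgf})}{f'(G_{\vgf})G_{\vgf}}\right)^{p-1}G_{\vgf}^{p-1}\right).
\end{equation*}
By the algebraic identity recorded in the remark after Proposition~\ref{firstprop}, $\big(f(u)/(f'(u)u)\big)^{p-1}=c_p$, so the bracket on the right is exactly $-\Delta_{p,A}(G_{\vgf}) + c_pVG_{\vgf}^{p-1} = Q_{p,A,c_pV}(G_{\vgf}) = \vgf$ by construction of the Green potential. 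Substituting yields
\begin{equation*}
Q_{p,A,V}(f(G_{\vgf})) = -\Delta_p^{1D}(f)(G_{\vgf})|\nabla G_{\vgf}|_A^p + (f'(G_{\vgf}))^{p-1}\vgf,
\end{equation*}
which is the claimed identity.

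Next I would verify that this expression is $\gneqq 0$. Using once more the remark, $-\Delta_p^{1D}(f)(t) = \big(\tfrac{p-1}{p}\big)^p \tfrac{f(t)^{p-1}}{t^p}>0$ for $t>0$, so the first summand is nonnegative (and strictly positive wherever $\nabla G_{\vgf}\neq 0$). The second summand is also nonnegative since $f'>0$ and $\vgf\gneqq 0$, and it is strictly positive on $\supp(\vgf)\neq \emptyset$. Hence the sum is $\gneqq 0$, so dividing by $f(G_{\vgf})^{p-1}>0$ produces a weight $W\gneqq 0$, and by construction $f(G_{\vgf})$ solves $Q_{p,A,V-W}(v)=0$.

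For the explicit formula for $W$ outside $\supp(\vgf)$, the second summand vanishes, leaving
\begin{equation*}
W = \frac{-\Delta_p^{1D}(f)(G_{\vgf})|\nabla G_{\vgf}|_A^p}{f(G_{\vgf})^{p-1}} = \left(\frac{p-1}{p}\right)^p\frac{f(G_{\vgf})^{p-1}}{G_{\vgf}^p}\cdot\frac{|\nabla G_{\vgf}|_A^p}{f(G_{\vgf})^{p-1}} = \left(\frac{p-1}{p}\right)^p\left|\frac{\nabla G_{\vgf}}{G_{\vgf}}\right|_A^p,
\end{equation*}
as desired. There is no real obstacle here: the whole statement is a direct computation, and the only subtlety is the purely algebraic check that the factor $c_p$ appearing in the hypothesis exactly matches $\big(f(u)/(f'(u)u)\big)^{p-1}$ for the chosen $f$, which is precisely what the remark guarantees.
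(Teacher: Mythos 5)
Your proof is correct and follows exactly the route the paper has in mind: the paper presents this Corollary as an immediate consequence of Lemma~\ref{Gphi} and Proposition~\ref{firstprop} with no separate argument, and your substitution of $u=G_{\vgf}$, $f(t)=t^{(p-1)/p}$ into the identity of Proposition~\ref{firstprop}, together with the two algebraic facts from the remark ($\big(f(u)/(f'(u)u)\big)^{p-1}=c_p$ and $-\Delta_p^{1D}(f)(t)=\big(\tfrac{p-1}{p}\big)^p f(t)^{p-1}/t^p$), is precisely the intended computation. The sign check ($\gneqq 0$) and the simplification of $W$ off $\supp(\vgf)$ are both carried out correctly.
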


The following lemma is a generalization of Lemma \ref{lem:coarea} to the case $V\neq 0$.
\begin{lemma}\label{coarea grad G}
	Assume that  $Q_{p,A,V}$ is subcritical in $\Gw$,  and let
	$G_{\varphi}\in C^{1,\alpha}_{\loc}(\Gw)$ be a Green potential (with respect to $0\lneq \vgf\in \core$), and assume that 
	\begin{equation}\label{eq:assum_G_G_V}
	\lim\limits _{x\to \overline{\infty}}G_{\varphi}=0;	\qquad  \quad 		\int_{\Gw}VG_{\vgf}^{p-1} \dx< 0;
	\quad \quad		\int_{\Gw}|V||G_{\vgf}|^{p-1} \dx<\infty.
	\end{equation}
	
	Then, there exists $0<M_{\vgf}<\sup\limits_{\Gw}G_{\varphi}$ such that 
	for almost every  $0<t<M_{\vgf}$, satisfying
	$$\text{supp}(\varphi)\Subset \Gw_t:=\{x\in \Gw:G_{\vgf}(x)>t \},$$
	there exists $C>0$, independent of $t$, such that 
	\begin{equation}\label{eq:coarea_modified}
	C^{-1}\leq \int_{G_{\varphi}=t} |\nabla G_{\varphi}|_A^{p-1}\,\mathrm{d} \sigma_A  \leq C,
	\end{equation}
	where $\mathrm{d} \sigma_A=\frac{|\nabla G_{\vgf}|_A}{|\nabla G_{\vgf}|}\, \dH$, $\mathcal{H}^{n-1}$-a.e.
\end{lemma}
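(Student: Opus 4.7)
The plan is to apply the Gauss-Green theorem (Lemma \ref{lem:Gauss_Green_non}) to the vector field $F := |\nabla G_{\vgf}|_A^{p-2} A \nabla G_{\vgf}$ on the super-level sets $\Gw_t := \{x \in \Gw : G_{\vgf}(x) > t\}$, and to extract \eqref{eq:coarea_modified} from the identity $-\diver(F) + V G_{\vgf}^{p-1} = \vgf$. The decay hypothesis $\lim_{x\to\overline{\infty}} G_{\vgf} = 0$ makes each $\Gw_t$ (for $t>0$) relatively compact in $\Gw$, so we may fix an open bounded $E$ with $\Gw_t \Subset E \Subset \Gw$; since $G_{\vgf}\in C^{1,\ga}_{\loc}(\Gw)\subset \mathrm{BV}(E)\cap C^1(E)$, Proposition \ref{prop:coarea_f_peri} ensures that $\Gw_t$ has finite perimeter for a.e. $t>0$. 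The generalized Sard theorem invoked in the proof of Lemma \ref{lem:coarea} then yields that, $\mathcal{H}^{n-1}$-a.e. on $\partial \Gw_t$, $\nabla G_{\vgf}\neq 0$, whence the classical outer unit normal exists and equals $\vec n = -\nabla G_{\vgf}/|\nabla G_{\vgf}|$. Finally, since $G_{\vgf}$ is locally $C^{1,\ga}$ and $\vgf - V G_{\vgf}^{p-1}\in L^1_{\loc}(\Gw)$, one has $F\in \mathcal{DM}^\infty_\loc(\Gw)$ with $\diver(F)=V G_{\vgf}^{p-1}-\vgf$.

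Choose $0<M_{\vgf}<\min\bigl(\min_{\supp(\vgf)} G_{\vgf},\;\sup_\Gw G_{\vgf}\bigr)$, so that $\supp(\vgf)\Subset \Gw_t$ for every $t\in (0,M_{\vgf})$. For a.e. such $t$, Lemma \ref{lem:Gauss_Green_non} gives
\begin{equation*}
\int_{\Gw_t}(V G_{\vgf}^{p-1}-\vgf)\dx
=\int_{\partial \Gw_t} F\cdot \vec n \dH
=-\int_{\{G_{\vgf}=t\}}\frac{|\nabla G_{\vgf}|_A^{p}}{|\nabla G_{\vgf}|}\dH,
\end{equation*}
which, using the definition $\dsigma_A=(|\nabla G_{\vgf}|_A/|\nabla G_{\vgf}|)\dH$ together with $\supp(\vgf)\subset \Gw_t$, rearranges to
\begin{equation*}
\int_{\{G_{\vgf}=t\}}|\nabla G_{\vgf}|_A^{p-1}\dsigma_A
=\int_{\Gw}\vgf\dx \;-\; \int_{\Gw_t} V G_{\vgf}^{p-1}\dx.
\end{equation*}
For every $t>0$, $\bigl|\int_{\Gw_t} V G_{\vgf}^{p-1}\dx\bigr|\leq \int_\Gw |V|G_{\vgf}^{p-1}\dx<\infty$, while dominated convergence combined with the sign hypothesis yields
$\lim_{t\to 0^+}\bigl(-\int_{\Gw_t} V G_{\vgf}^{p-1}\dx\bigr)=-\int_\Gw V G_{\vgf}^{p-1}\dx>0$.
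After a possible further shrinking of $M_{\vgf}$, the right-hand side of the displayed identity therefore lies in a compact subinterval $[\alpha,\beta]\subset (0,\infty)$ uniformly in $t\in(0,M_{\vgf})$, which is precisely \eqref{eq:coarea_modified}.

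The main delicate point is to justify Gauss-Green with a \emph{classical} outer normal: this is where the finite-perimeter property of Proposition \ref{prop:coarea_f_peri} must be combined with the Sard-type theorem for $C^{1,\ga}$ maps (Theorem 1.2 of \cite{BHS}, as already used in the proof of Lemma \ref{lem:coarea}) to show that the critical set $\{\nabla G_{\vgf}=0\}\cap \{G_{\vgf}=t\}$ is $\mathcal{H}^{n-1}$-negligible for a.e. $t$. Everything else is a routine extension of the argument for Lemma \ref{lem:coarea}: the new contribution of the potential $V$, absent in the case $V=0$, is controlled precisely by the two integrability/sign hypotheses of the statement.
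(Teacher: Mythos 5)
Your proof is correct and follows essentially the same route as the paper's: apply the Gauss--Green theorem to $F=|\nabla G_{\vgf}|_A^{p-2}A\nabla G_{\vgf}$ over the super-level sets $\Gw_t$, use $\diver F = VG_{\vgf}^{p-1}-\vgf$, identify the boundary term as $\int_{\{G_{\vgf}=t\}}|\nabla G_{\vgf}|_A^{p-1}\dsigma_A$ via the classical normal $\vec n=-\nabla G_{\vgf}/|\nabla G_{\vgf}|$, and then bound $\int_{\Gw}\vgf\dx-\int_{\Gw_t}VG_{\vgf}^{p-1}\dx$ from above by $\int_{\Gw}\vgf\dx+\int_{\Gw}|V|G_{\vgf}^{p-1}\dx$ and from below away from zero for $t$ small. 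The only (welcome) difference is that you spell out the role of the sign hypothesis $\int_{\Gw}VG_{\vgf}^{p-1}\dx<0$ explicitly via dominated convergence --- choosing $M_{\vgf}$ so that $-\int_{\Gw_t}VG_{\vgf}^{p-1}\dx$ is close to the strictly positive limit --- whereas the paper leaves this step more implicit in its display \eqref{eq:M} and the subsequent lower-bound inequality; this makes your write-up slightly cleaner at exactly the point where $M_{\vgf}$ is actually needed.
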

\begin{proof}
	\
	The assumption $\lim\limits _{x\to \overline{\infty}}G_{\varphi}=0$, and Proposition \ref{prop:coarea_f_peri} imply that for a.e. $t>0$   the set 
	$\Gw_t$ has finite perimeter. 
	Furthermore, \eqref{eq:assum_G_G_V} implies that 
	$|V|G_{\vgf}^{p-1}\in \mathcal{M}(\Gw')$. Finally, Sard's theorem for $C^{1,\alpha}$-functions implies that the conditions in Gauss-Green theorem (Lemma~\ref{lem:Gauss_Green_non} are satisfied in $\Gw'$. Hence, 
	\begin{equation*}
	\! \int_{\Gw_t}\!\!( \varphi  \! -  \! V|G_{\varphi}|^{p-2}G_{\vgf}\! ) \!\dx\!=\!-  \!   \! \int_{\Gw_t}\!  \! 
	\! \!\diver  \! (|\nabla G_{\varphi}|_A^{p-2}  \!   \! A\nabla G_{\varphi}) \!\dx\!=\! 
	-  \!   \! \int_{\partial \Gw_t}      \! \! \!  \! |\nabla G_{\varphi}|_A^{p-2}  \! A\nabla G_{\varphi} \cdot  \vec{n}\! \dH.
	\end{equation*}
	The assumptions 	$\lim\limits _{x\to \overline{\infty}}G_{\varphi}=0,$ and	$ 		\int_{\Gw}VG_{\vgf}^{p-1} \dx< 0$ imply that for a sufficiently small $M_{\vgf}>0$ and $0<t<M_{\vgf}$,
	\begin{equation}\label{eq:M} 
	 \int_{\Gw_t}\big (\varphi-V|G_{\varphi}|^{p-2}G_{\vgf} \big )\dx \leq 
	\int_{\Gw}\big (\vgf+ |V||G_{\vgf}|^{p-1} \big ) \dx\leq C.
	\end{equation}

	Moreover, the assumption $\text{supp}(\vgf)\Subset\Gw_t$ implies 
	$$
	C^{-1} \leq \int_{\Gw} \vgf \dx= \int_{\Gw_t} \vgf \dx\leq\int_{\Gw_t}\big (\varphi-V|G_{\varphi}|^{p-2}G_{\vgf} \big )\dx.
	$$
	Consequently,
	$$
	\int_{\Gw_t}\big (\vgf-V|G_{\vgf}|^{p-2}G_{\vgf}\big ) \dx \asymp C,
	$$
	and $C$ does not depend on $t$.
	Sard's theorem for $C^{1,\alpha}$ functions implies that for $\mathcal{H}^{n-1}$-a.e. $x\in \partial \Gw'$, $|\nabla G(x)|\neq 0$. Furthermore,  the definition of $\Gw'$ implies that $G_{\vgf}\geq t$ in $ \Gw'$, and hence, $\vec{n}=-\frac{\nabla G_{\vgf}}{|\nabla G_{\vgf}|}$ for $\mathcal{H}^{n-1}$-a.e. $x\in \partial \Gw'$. Therefore,
	$$
	-\int_{\partial \Gw_t}|\nabla G_{\varphi}|_A^{p-2}A\nabla G_{\varphi}\cdot \vec{n}\dH =
	\int_{\partial \Gw_t}|\nabla G_{\varphi}|_A^{p-1}\frac{|\nabla G_{\vgf}|_A}{|\nabla G_{\vgf}|}\dH \asymp C. \quad \qedhere
	$$ 
\end{proof}
\begin{remark}\label{rem:M}
	\em{
	The assumption $\int_{\Gw}VG_{\vgf}^{p-1} \dx< 0$ in Lemma \ref{coarea grad G} is needed for arguing \eqref{eq:M}. In particular, the lemma  still holds once assuming instead that $V\leq 0$ in $\Gw$.
	}
\end{remark}
We proceed with the following lemma. 
\begin{lemma}[{cf. \cite[Propositions 5.1 and 5.5]{DP}}]\label{optmal Lp}
	Let  $0\lneqq \varphi\in \core$, and assume that $Q_{p,A,c_p V}$ is subcritical in $\Gw$. Let
	$G_{\varphi}\in C^{1,\alpha}_{\loc}(\Gw)$ be a Green potential satisfying $Q_{p,A,c_pV}(G_{\vgf})=\varphi \quad \text{in } \Gw$, and assume that
	$$
	\lim\limits _{x\to \overline{\infty}}G_{\varphi}=0;	\qquad  \quad 		\int_{\Gw}VG_{\vgf}^{p-1} \dx< 0;
	\quad \quad		\int_{\Gw}|V||G_{\vgf}|^{p-1} \dx<\infty.
	$$
	Consider the function $f(t)=t^{\frac{p-1}{p}},$ and let 
	$$
	W:=\frac{Q_{p,A,V}(f(G_{\varphi}))}{f(G_{\varphi})^{p-1}}\, .
	$$
	Then $Q_{p,A,V-W}$ is critical in $\Gw,$ with a ground state $f(G_{\varphi})$ and   $\int_{\Gw}Wf(G_{\varphi})^p \dx=\infty$.
	Hence, $W$ is an optimal Hardy-weight  for  $Q_{p,A,V}$ in $\Gw.$ 
\end{lemma}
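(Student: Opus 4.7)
The plan is to verify the three items in Definition~\ref{def:optimal}: nonnegativity of $Q_{p,A,V-W}$, its criticality with ground state $f(G_\varphi)$, and $\int_\Omega Wf(G_\varphi)^p\,dx=\infty$.

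First, by the corollary of Proposition~\ref{firstprop}, $f(G_\varphi)$ is already a positive solution of $Q_{p,A,V-W}(u)=0$ in $\Gw$, so by the \textbf{AP} theorem $Q_{p,A,V-W}\ge 0$ in $\Gw$; in particular, $W$ is a Hardy-weight. That corollary also yields the explicit identity
\[
W=\Bigl(\tfrac{p-1}{p}\Bigr)^{p}\Bigl|\tfrac{\nabla G_\varphi}{G_\varphi}\Bigr|_A^{p} \qquad\text{on }\Gw\setminus\mathrm{supp}(\varphi),
\]
which will drive the remaining two steps.

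For criticality I would construct a null-sequence of the form $\phi_k:=h_k(G_\varphi)f(G_\varphi)$ and apply Lemma~\ref{lem:null_seq_non}. Fix $b>0$ small enough that $\mathrm{supp}(\varphi)\Subset\{G_\varphi>b\}$, pick $a_k\searrow 0$, and set $h_k\equiv 1$ on $\{G_\varphi\ge b\}$, $h_k\equiv 0$ on $\{G_\varphi\le a_k\}$, and $h_k(t)=\log(b/t)/\log(b/a_k)$ on $\{a_k\le G_\varphi\le b\}$. Since $G_\varphi\to 0$ at $\overline{\infty}$, the set $\{G_\varphi\ge a_k\}$ is compactly contained in $\Gw$, so $\phi_k\in W^{1,p}(\Gw)$ has compact support, and a standard approximation produces a $C_0^{\infty}$ variant; on $B:=\{G_\varphi>b\}$ one has $\phi_k\equiv f(G_\varphi)$, giving $\|\phi_k\|_{L^p(B)}\asymp 1$. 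To estimate the energy, apply Corollary~\ref{cor:simp_non} with $v=f(G_\varphi)$ and $w=h_k(G_\varphi)$. Via the classical coarea formula and the uniform identity $\int_{\{G_\varphi=t\}}|\nabla G_\varphi|_A^{p-1}\,d\sigma_A\asymp C$ from Lemma~\ref{coarea grad G}, the quantities $X(h_k)=\int v^p|\nabla h_k|_A^p\,dx$ and $Y(h_k)=\int h_k^p|\nabla v|_A^p\,dx$ reduce to one-dimensional integrals in the level parameter $t$; a direct computation gives
\[
X(h_k)\asymp \frac{1}{\log^{p-1}(b/a_k)},\qquad Y(h_k)\asymp \log(1/a_k),
\]
so that $X(h_k)\to 0$ and, for $p>2$, the mixed term $X(h_k)^{2/p}Y(h_k)^{(p-2)/p}$ is of order $1/\log(b/a_k)\to 0$ (the exponents cancel: $-2(p-1)/p+(p-2)/p=-1$). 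Corollary~\ref{cor:simp_non}(2) then yields $\mathcal{Q}^{\Gw}_{p,A,V-W}(\phi_k)\to 0$, so $\{\phi_k\}$ is a null-sequence and $Q_{p,A,V-W}$ is critical with ground state $f(G_\varphi)$.

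Finally, for null-criticality, use $f(G_\varphi)^p=G_\varphi^{p-1}$ and the explicit formula for $W$ to get $Wf(G_\varphi)^p\asymp|\nabla G_\varphi|_A^p/G_\varphi$ on $\Gw\setminus\mathrm{supp}(\varphi)$. Coarea combined with Lemma~\ref{coarea grad G} then gives
\[
\int_{\Gw}Wf(G_\varphi)^p\,dx\;\ge\;C\int_{\{a<G_\varphi<M_\varphi\}}\frac{|\nabla G_\varphi|_A^p}{G_\varphi}\,dx\;\ge\;C\int_{a}^{M_\varphi}\frac{dt}{t}\;\xrightarrow[a\to 0]{}\;\infty,
\]
where $M_\varphi$ is the constant furnished by Lemma~\ref{coarea grad G}. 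Together with the criticality, Definition~\ref{def:optimal} (and Remark~\ref{rem_opt_infty}) shows that $W$ is an optimal Hardy-weight for $Q_{p,A,V}$ in $\Gw$. The main obstacle is precisely the null-sequence step: in the regime $p>2$ one must simultaneously tame both summands in Corollary~\ref{cor:simp_non}(2), and this forces the \emph{logarithmic} choice of $h_k$ so that the two powers of $\log(b/a_k)$ conspire to $-1$; any cruder cutoff fails to drive the mixed term to zero. A secondary technical nuisance is that the coarea flux bound of Lemma~\ref{coarea grad G} rests on the sign hypothesis $\int_\Gw VG_\varphi^{p-1}\,dx<0$, and one has to verify that under our assumptions it remains uniform in $t$ throughout the transition interval $(a_k,b)$ used in the cutoff.
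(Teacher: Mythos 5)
Your proof is correct and follows essentially the same strategy as the paper: a logarithmic cutoff tested in the simplified energy of Corollary~\ref{cor:simp_non}, with the uniform level-set flux bound of Lemma~\ref{coarea grad G} feeding the coarea computation, and the same $\int dt/t$ divergence for null-criticality. The paper's $\phi_k$ is nominally a two-sided cutoff inherited from \cite{DP}, but since the Green potential $G_\varphi$ is bounded only the lower transition ever fires, so your one-sided $h_k$ is the same device; the one small omission is that $b$ should also be taken below the constant $M_\varphi$ of Lemma~\ref{coarea grad G} (the paper arranges this by choosing $K$ with $\max_{\Gw\setminus K}G_\varphi<M_\varphi$) so that the flux bound is in force on the entire transition range $(a_k,b)$.
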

\begin{proof}
	\
	{\bf Criticality:} 
 	Let $M_{\vgf}$ be given by Lemma \ref{coarea grad G}, and let $K\Subset \Gw$ be a precompact smooth subdomain satisfying
	$\text{supp} \, \varphi \Subset K$, $\max \limits_{\Gw\setminus K}G_{\vgf}<M_{\vgf}$
	and $G_{\varphi }<1$  for all $x\in \Gw \setminus K$.
	Assume without loss of generality that $\inf\limits_{K}G_{\varphi}\geq 1$.
	
	For each $k\in \N$, consider the function $\phi_k(f(G_{\varphi}))$, where $f(t)=t^{\frac{p-1}{p}}$ and 
	\begin{equation*} \label{eq:phi_k}
	\phi_k(t)=
	\begin{cases}
	0 & 0 \leq t\leq \frac{1}{k^2},\\
	2+\frac{\log t}{\log k} & \frac{1}{k^2} \leq t\leq \frac{1}{k},\\
	1 & \frac{1}{k}\leq t \leq k,\\
	2-\frac{\log t}{\log k} & k \leq t\leq k^2,\\
	0 & t \geq k^2.
	\end{cases}
	\end{equation*}
	We claim that $u_k= \phi_k(f(G_{\varphi}))f(G_{\varphi})$ is a null-sequence of $Q_{p,A,V-W}$ in $\Gw$. Indeed, by \eqref{simp_energy}, $\mathcal{Q}_{\mathrm{sim}}(w)\asymp 
	\mathcal{Q}(wf(G_{\varphi})) = \mathcal{Q}(u)$,	where 
	$$\mathcal{Q}(u)
	= \int_{\Gw}\big(|\nabla u|_A^p+(V-W)|u|^p \big) \dx,$$  
	and
	$$\mathcal{Q}_{\mathrm{sim}}(w) = \int_{\Gw} f(G_{\varphi})^2|\nabla w|_A^2 \left ( w|\nabla (f(G_{\varphi}))|_A + f(G_{\varphi}) |\nabla w|_A\right )^{p-2}\!\! \dx.$$
	Moreover,  by Corollary~\ref{cor:simp_non}  we have 
	\begin{equation*}\label{eq:X_simp_non_lin}
	\mathcal{Q}_{\text{sim}}(w)\leq
	\begin{cases}
	CX(w)&1<p\leq 2, \\
	C \left[ X(w)+X(w)^{2/p}Y(w)^{\frac{p-2}{p}}\right] & p>2,
	\end{cases}
	\end{equation*}
	where 
	$$
	X(w)=\int_{\Gw}|\nabla w|_A^p f(G_{\varphi})^p \dx, \qquad Y(w)=\int_{\Gw} |w|^p|\nabla (f(G_{\varphi}))|_A^p \dx.
	$$
	By the (classical) coarea formula (\cite[Theorem 2.32]{CTZ}),
	\begin{align*}&
	X(\phi_k(f(G_{\varphi})))=\int_{\Gw\setminus K} f(G_{\varphi})^p|\phi_k'(f(G_{\varphi}))|^p|f'(G_{\varphi})|^p|\nabla G_{\varphi}|_A^p \dx= \\ &
	\int_{0}^{\max\limits_{\Gw\setminus K}G_{\varphi}}f(t)^p|\phi_k'(f(t))|^pf'(t)^p\dt\int_{G_{\varphi}=t} |\nabla G_{\varphi}|_A^{p-1}\text{d} \sigma_A.
	\end{align*}
	By Lemma \ref{coarea grad G}, for a.e. $0<t<\max\limits_{\Gw\setminus K}G_{\varphi}$ we have  $\int_{G_{\varphi}=t} |\nabla G_{\varphi}|_A^{p-1}\text{d} \sigma_A  \asymp 1$. Moreover,
	\begin{align*}
	\int_{0}^{\max\limits_{\Gw\setminus K}G_{\varphi}}  f(t)^p|\phi_k'(f(t))|^pf'(t)^p\!\dt=& 
	C(p)\int_0^{f(\max\limits_{\Gw\setminus K}G_{\varphi})} \frac{|s\phi_k'(s)|^p}{s}\!\ds=\\
	&\frac{C(p)}{\log^p \!k}\int_{\frac1{k^2}}^{\frac1k}\!\frac{1}{s} \ds\asymp \left(\!\frac{1}{\log k} \!\right)^{\!\!p-1}.
	\end{align*} 
	Consequently, $X(\phi_n(f(G_{\varphi})))\asymp \left (\dfrac{1}{\log k} \right )^{p-1}$. 
	By a similar calculation, 
	\begin{align*}&
	Y(\phi_k(f(G_{\varphi})))=\int_{\Gw\setminus K}|\phi_k(f(G_{\varphi}))|^pf'(G_{\varphi})^p|\nabla G_{\varphi}|_A^p \dx\asymp \int_{0}^{1}|\phi_k(f(t))|^pf'(t)^p 
	\dt \asymp\\ &
	\int_{0}^{f(1)}|\phi_k(s)|^p\frac{ds}{s}=
	\int_{1/k^2}^{1/k}\left ( 2+\frac{\log s}{\log k}\right )\frac{1}{s} \ds+\int_{1/k}^1\frac{1}{s}\ds
	\asymp \int_{1/k}^{1}\frac{1}{s} \ds\asymp \log k . 
	\end{align*}
	It follows that
	$\mathcal{Q}_{\mathrm{sim}}(w_k)=\mathcal{Q}_{\mathrm{sim}}(\phi_k(f(G_{\varphi})))\to 0$ as $k \to \infty$, and therefore,
	$$\mathcal{Q}(u_k)=\mathcal{Q}(\phi_k(f(G_{\varphi})f(G_{\varphi})))\to 0 ~ \text{as}~ k\to \infty.$$ 
	Let us specialize $\varepsilon_0>0$ such  that  the set $B=\{x\in \Gw: \varepsilon_0/2<f(G_{\varphi})<\varepsilon_0 \}$ is nonempty, bounded,
	and contained in $\Gw \setminus K.$
	Therefore, 
	\begin{equation}\label{eq:same_phi_k}
	\int_{B} |u_k|^p \dx=\int_{B}|\phi_k(f(G_{\varphi}))|^p f(G_{\varphi})^p\dx\asymp 1.
	\end{equation}
	Thus, the sequence $\{u_k\}$ is a null-sequence, and in light of Lemma \ref{lem:null_seq_non}, $Q_{p,A,V-W}$ is critical in  
	$\Gw$.
	
	\noindent {\bf Null-criticality:}
	Let $K\Subset \Gw$ be a precompact smooth subdomain as in the first part of the proof.
	
	For almost every $0<\tau<1$ we consider the set $\Gw_{\tau}:=\{x\in \Gw \mid  \tau<G_{\varphi}<\min\limits_{K}G_{\varphi} \}$
	which has finite perimeter.
	Recall that
	$$
	W=\left( \frac{p-1}{p}\right )^p\frac{|\nabla G_{\vgf}|_A^p}{G_{\vgf}^p} \qquad \mbox{in } \Gw_{\xi}.
	$$
	By the (classical) coarea formula  and \eqref{eq:coarea_modified},
	\begin{align*}&
	\int _{\Gw_{\tau}}W(f(G_{\varphi}))^p \dx= \left( \frac{p-1}{p}\right )^p\int _{\Gw_{\tau}}\frac{|\nabla G_{\vgf}|_A^p}{G_{\vgf}^p}(f(G_{\varphi}))^p \dx=\\[2mm] &
	\left( \frac{p-1}{p}\right )^p\int_{\R_+}\left (\frac {f(t)}{t} \right )^p \dt \int _{G_{\varphi=t}}|\nabla G_{\vgf}|_A^{p-1} \text{d}\sigma_A
	\asymp C \int_{\tau}^{\min \limits_K G_{\varphi}} \left (\frac {f(t)}{t} \right )^p \dt.
	\end{align*}
	By letting $\tau \to 0$ we obtain that $\int_{\Gw \setminus K}Wf(G_{\varphi})^p \dx=\infty.$
\end{proof}
\begin{remark}\label{rem:M2}
	\em{
	Remark \ref{rem:M} implies that Lemma \ref{optmal Lp} still holds if one assumes $V\leq 0$ in $\Gw$ instead of the assumption
	$\int_{\Gw}VG_{\vgf}^{p-1} \dx< 0$. 
	}
\end{remark}

\begin{proof}[Proof of Theorem \ref{thm:main_non_2}]
	Notice that $c_p > 1$, and hence  $Q_{p,A,{V}/{c_p}}$ is subcritical in $\Gw$.
	Let $G_{\vgf}$ be the Green potential of $Q_{p,A,V}$, given by Lemma \ref{Gphi}.
	By Lemma \ref{optmal Lp}, the operator  $Q_{p,A,{V}/{c_p}}$ admits an optimal Hardy-weight in $\Gw$.
\end{proof} 
\begin{proof}[Proof of Corollary \ref{thm:main_non_1}]
	\
	Notice that $c_p > 1$, and hence  $Q_{p,A,{V}/{c_p}}$ is subcritical in $\Gw$.
	Let $G_{\vgf}$ be the Green potential of $Q_{p,A,V}$, given by Lemma \ref{Gphi}.
	By the minimal growth property of $G_{\vgf}$, for any $x_0\in K\Subset \Gw$, $G_{\vgf}\leq CG$ in $\Gw\setminus K$, and in particular, 
	$$
	\lim_{x\to \overline{\infty}}G_{\vgf}=0, \qquad \int_{\Gw}|V||G_{\vgf}|^{p-1} \dx<\infty.
	$$
	By   Lemma \ref{optmal Lp} and Remark \ref{rem:M2}, the operator  $Q_{p,A,{V}/{c_p}}$ admits an optimal Hardy-weight in $\Gw$.
	
\end{proof}
Corollary~\ref{thm:main_non_1} and the following remark give rise to new optimal Hardy-type inequalities in the smooth case. 
\begin{remark}\label{Rem:1}
	Let $\Gw\subset \R^n$ be a domain and  let
	$Q_{p,A,V}$ be a subcritical operator in $\Gw$ satisfying Assumptions \ref{assump2_non_linear}. 
	Assume further that $V\leq 0$ in $\Gw$. Then, there exists  $K\Subset \Gw$  and $x_0\in  \mathrm{int} K \Subset \Gw$, such that the operator $Q_{p,A,V}$ admits a positive solution $G(x)$ in $\Gw\setminus \{x_0\}$ satisfying \eqref{eq:Gphi_min_non}
	in each of the following cases :
	\begin{itemize}
		\item $A$ is a constant, symmetric,  positive definite matrix; $V\in L^{\infty}(\Gw)$; $\Gw$ is a bounded $C^{1,\alpha}$ domain and $\lambda_1(\Gw)>0$ \cite{L}.
		\item$A$ is a constant, symmetric,  positive definite matrix; $V\in C_0^{\infty}(\R^n)$; $\Gw=\R^n$ \cite{FP1,GP1}.  
	\end{itemize}
	In particular, Theorem \ref{thm:main_non_2} can be applied in each of the latter cases. 
\end{remark}
\begin{remark}\label{Rem:6}
	Combining Theorem~\ref{thm:main_non_2} and Lemma~\ref{Vgeq0}, we obtain optimal Hardy-weights for a wide family of operators $Q_{p,A,V}$ with indefinite potentials $V$.
\end{remark}
\section*{Acknowledgments}
The paper is based on part of the author's Ph.~D. thesis. The author expresses his gratitude to his thesis adviser, Professor Yehuda Pinchover,
for the encouragement, support and help he gave him. The author is grateful to the Technion for supporting his study.
\bibliographystyle{amsplain}

\end{document}